\newcommand{\R}{\mathbb{R}}
\newcommand{\N}{\mathbb{N}}
\newcommand{\eee}{\`e}
\mathchardef\emptyset="001F
\newtheorem{maintheorem}{Theorem}
\newtheorem{theorem}{Theorem}[section]
\newtheorem{problem}[theorem]{Problem}
\newtheorem{lemma}[theorem]{Lemma}
\newtheorem{remark}[theorem]{Remark}
\newtheorem{proposition}[theorem]{Proposition}
\newtheorem{notation}[theorem]{Notation}
\newtheorem{corollary}[theorem]{Corollary}
\newtheorem{hypothesis}[theorem]{Hypothesis}
\numberwithin{equation}{section}
\newcommand{\eps}{\varepsilon}
\newcommand{\weakto}{\rightharpoonup} 
\newcommand{\aein}{\text{a.e.\ in }}
\newcommand{\dualoperator}
\def\calD{{\mathcal D}} \def\calE{{\mathcal E}} \def\calF{{\mathcal F}}
 \def\calH{{\mathcal H}} 
\def\calJ{{\mathcal J}}  
\def\calP{{\mathcal P}}
\def\rmD{{\mathrm D}}
\newcommand{\dive}{\mathop{\rm div}}
\def\dd{\;\!\mathrm{d}} 
\newcommand{\Gdir}{\Gamma_{\mathrm{D}}}
\newcommand{\Gneu}{\Gamma_{\mathrm{N}}}
\definecolor{ddcyan}{rgb}{0,0.1,0.9}
\definecolor{ddmagenta}{rgb}{0.6,0,0.9}
\definecolor{dmagenta}{rgb}{0.6,0,0.6}
\definecolor{orange}{rgb}{0.6,0.2,0}
\definecolor{vgreen}{rgb}{0.1,0.5,0.2}
\newcommand{\piecewiseConstant}[2]{\overline{#1}_{\kern-1pt#2}}
\newcommand{\foraa}{\text{for a.a. }}
\newcommand{\ue}{u^{\mathrm{e}}}
\newcommand{\ur}{u^{\mathrm{r}}}
\newcommand{\vr}{v^{\mathrm{r}}}
\newcommand{\bur}{\bar{u}^{\mathrm{r}}}
\newcommand{\ueo}{{u}_{\Omega}^{\mathrm{e}}}
\newcommand{\uro}{{u}_{\Omega}^{\mathrm{r}}}
\newcommand{\upme}{{u}_{\Omega}^{\mathrm{e},\pm}}
\newcommand{\ube}{{u}_{B}^{\mathrm{e}}}
\newcommand{\upmr}{{u}_{\Omega}^{\mathrm{r},\pm}}
\newcommand{\ubr}{{u}_{B}^{\mathrm{r}}}
 \def\trait #1 #2 #3 {\vrule width #1pt height #2pt depth #3pt}
 \def\fin{\hfill
         \trait .3 5 0
         \trait 5 .3 0
         \kern-5pt
         \trait 5 5 -4.7
         \trait 0.3 5 0
 \medskip}
\newcommand{\QED}{\mbox{}\hfill\rule{5pt}{5pt}\medskip\par}
\newcommand{\param}{(\eps, \lambda, \mu, b,\rho)}
\newcommand{\bparam}{(\bar\eps, \bar\lambda, \bar\mu, \bar b,\bar\rho)}
\newcommand{\paraml}{(\bar{\lambda},\bar{ \mu}, \bar{b},\bar{\rho})}
\newcommand{\paramn}[1]{(\eps_{#1}, \lambda_{#1}, \mu_{#1}, b_{#1},\rho_{#1})_{#1}}
\newcommand\JUMP[1]{\mathchoice
                  {\big[\hspace*{-.3em}\big[#1\big]\hspace*{-.3em}\big]}
                   {[\hspace*{-.15em}[#1]\hspace*{-.15em}]}
                   {[\![#1]\!]}
                   {[\![#1]\!]}}
 \newcommand{\trp}[1]{{#1}^+}
  \newcommand{\trm}[1]{{#1}^-}
\newcommand{\symprod}{\otimes_{\mathrm{S}}}
\newcommand{\tr}[1]{\mathrm{tr}\left(#1\right)}
\newcommand{\Sym}{\R_{\mathrm{sym}}^{3\times 3}}
\newcommand{\Lin}{\mathrm{Lin}}
\newcommand{\BV}{\mathrm{BV}}
\newcommand{\abv}{\mathsf{X}}
\newcommand{\abop}{\mathsf{A}}
\newcommand{\babop}{\bar{\mathsf{A}}}
\newcommand{\abfo}{\mathsf{F}}
\newcommand{\babfo}{\bar{\mathsf{F}}}
\newcommand{\absp}{\mathsf{H}}
\newcommand{\babsp}{\bar{\mathsf{H}}}
\newcommand{\Spu}{\mathsf{U}}
\newcommand{\Spv}{\mathsf{V}}
\newcommand{\abopn}[1]{\mathsf{A}_{#1}}
\newcommand{\abfon}[1]{\mathsf{F}_{#1}}
\newcommand{\abspn}[1]{\mathsf{H}_{#1}}
\newcommand{\vo}[1]{{#1}_{\Omega}}
\newcommand{\voe}[1]{{#1}_{\Omega}^{\mathrm{e}}}
\newcommand{\vbe}[1]{{#1}_{B}^{\mathrm{e}}}
\newcommand{\vop}[1]{{#1}_{\Omega}'}
\newcommand{\vB}[1]{{#1}_{B}}
\newcommand{\vb}[1]{{#1}_{B}}
\newcommand{\vs}[1]{{#1}_{S}}
\newcommand{\voB}[1]{{#1}_{B}'}
\newcommand{\projn}[1]{\mathsf{P}_{#1}}
\newcommand{\sym}{\mathrm{sym}}
\newcommand{\urn}{u_n^{\mathrm{r}}}
\newcommand{\uen}{u_n^{\mathrm{e}}}
\newenvironment{rcomm}{\color{red}}{\color{black}}
\newenvironment{rnew}{\color{ddmagenta}}{\color{black}}
\newcommand{\berin}{\begin{rnew}}
\newcommand{\erin}{\end{rnew}}
\newcommand{\quiname}{(\eps,\lambda,\mu,b,\rho)}
\newcommand{\qui}[1]{(\eps_{#1},\lambda_{#1},\mu_{#1},b_{#1},\rho_{#1})}
\newcommand{\nq}[1]{q_{#1}}
\newcommand{\beroc}{\begin{rcomm}}
\newcommand{\eroc}{\end{rcomm}}
\newenvironment{newricky}{\color{ddcyan}}{\color{black}}
\newcommand{\bnr}{\begin{newricky}}
\newcommand{\enr}{\end{newricky}}
\newcommand{\sop}[1]{\mathscr{S}_{#1}}
\newcommand{\ubn}{u_{B,n}}
\newcommand{\wubn}{\widehat{u}_{B,n}}
\newcommand{\RRR}{\color{magenta}}
\newcommand{\EEE}{\color{black}}
\newcommand{\RRE}{\color{black}}
\title[]{Dynamics of two linearly elastic bodies
connected by a heavy thin soft viscoelastic layer}
\author{Elena Bonetti}
\address{E.\ Bonetti, Dipartimento di Matematica, Universit\`a degli studi di Milano, via Saldini 50, 20133 Milano - Italy}
\email{elena.bonetti\,@\,unipv.it}
\author{Giovanna Bonfanti}
\address{G.\ Bonfanti, DICATAM -- Sezione di Matematica, Universit\`a degli studi di Brescia, via Valotti 9, 25133 Brescia - Italy}
\email{giovanna.bonfanti\,@\,unibs.it}
\author{Christian Licht}
\address{C.\ Licht, LMGC, Univ Montpellier, CNRS, Montpellier, France \emph{and} Dept. Maths, Mahidol University, Bangkok, Thailand}
\email{christian.licht\,@\,umontpellier.fr}
\author{Riccarda Rossi}
\address{R.\ Rossi, DIMI, Universit\`a degli studi di Brescia, via Branze 38, 25133 Brescia - Italy}
\email{riccarda.rossi\,@\,unibs.it}
\thanks{
This work was partially carried out during a visit of C.L.\ at the Sezione di Matematica of DICATAM (University of Brescia),  also supported by
 Gruppo Nazionale per  l'Analisi Matematica, la
  Probabilit\`a  e le loro Applicazioni (GNAMPA) of the Istituto Nazionale di Alta Matematica (INdAM)
 }
\begin{document}

\begin{abstract}
In this paper we extend the asymptotic analysis in \cite{LLOO},
performed on a structure consisting of two linearly elastic bodies connected by a thin soft nonlinear Kelvin–Voigt viscoelastic adhesive layer,  to the case in which
the total mass of the layer remains strictly positive as its thickness tends to zero.
\par
We obtain convergence results by means of a nonlinear version of Trotter's theory of approximation of semigroups acting on variable Hilbert spaces.
Differently from the limit models derived in \cite{LLOO}, in the present analysis the dynamic effects on the surface on which the layer shrinks do not disappear. Thus,  the limiting behavior of the remaining bodies is described not only in terms of their displacements on the contact surface, but also by an additional variable that keeps track of the dynamics in the adhesive layer.
\EEE
\end{abstract}
\subjclass{}
\keywords{}

\maketitle
\section{Introduction}
PDE systems  coupling bulk and surface equations play an important role in several applications. In particular, they are used to describe different physical  situations in which two spatial scales are involved: a macroscopic scale for the bulk domain and a microscopic scale to capture dynamics on a thin layer located at the boundary. Among  others, models for contact with adhesion between rigid bodies represent an important application of this kind of approach. Indeed, these models  couple mechanical and thermal properties of the involved bodies and of  the microscopic configurations of the
 thin adhesive layer \EEE
  between the bodies.
 \par
  In a macroscopic description, this layer is considered as a part of the boundaries and dynamics of the physical variables are described by  boundary  equations. \RRE This feature occurs, e.g., in the models for  contact  with adhesion between  a viscoelastic body and a rigid support
  analyzed in, e.g.,  \cite{BBR1}, \cite{BBR3}, 
 and  \cite{BBRen}. Such models are derived from  the theory for damage in thermoelastic materials  by  \textsc{Fr\'emond}  \cite{FRE87,FRE01}. Specifically, the related energy functionals and dissipation potentials are written both  in the bulk and on the surface and, accordingly, bulk and surface equations are recovered via a generalization of   the principle of virtual powers. The main idea is to account for  the effects of the microscopic forces, responsible for the degradation of the  adhesion \EEE on the interface between body and support,
 in the energy balance.
While the PDE systems from   \cite{BBR1}, \cite{BBR3}, 
 and  \cite{BBRen} are \emph{rate-dependent}, delamination can also be treated as a \emph{rate-independent} phenomenon, see e.g.\ \cite{KMR, RSZ}. In that modeling context as well,
 the microscopic damage in the interface is assumed to influence the strength of the adhesion and unilateral conditions are accounted for to ensure non-penetrability between the adhering  bodies.
  \par
  A possible validation of this kind of models, coupling bulk and surface phenomena,
  could be provided by deriving the surface equations  from equations set on a thin layer, as the thickness of the layer tends to zero.
  This kind of asymptotic analysis has been tackled in the literature using different analytical techniques and modeling approaches.
One possibility is to develop  a formal asymptotic expansion method as in \cite{KLA91,DLR14,LR11}. For damage and delamination, we refer to the asymptotic analyses carried out in  \cite{BBLR1,BBLR2}.
In the context of rate-independent modeling of delamination,  instead,  $\Gamma$-convergence type techniques were used in \cite{mielke} to show that \emph{Energetic solutions} to a system for isotropic damage converge to an Energetic solution of a delamination model as the thickness of the layer between the two bulk bodies,  where damage occurs, tends to zero.
Indeed, the \emph{Energetic} weak solvability notion for rate-independent processes, consisting of an energy-dissipation balance and of a stability condition that involves the minimization of a suitable  functional,  allows for the usage of the variational techniques at the core of the analysis in
 \cite{mielke} (see also \cite{FPRZ, FRZ}).
\par
 A rigorous approach based on variational convergences techniques has been carried out for this kind of problems, in the \emph{rate-dependent} framework, in a series of papers,
cf.\ e.g.\  \cite{L93, LM97, LLOO, Licht-Weller}. In particular, this paper follows up on the analysis developed in
  \cite{LLOO}, where \EEE  the authors derived an asymptotic model for the dynamics
of two linearly elastic bodies connected by a thin viscoelastic layer
\RRE by means by of a nonlinear version of Trotter's theory of approximation of semigroups,  cf.\ \cite{Trotter}. \EEE
More specifically, in  \cite{LLOO}
the model was obtained by studying the asymptotic behavior,
as some  parameters accounting for  geometrical
and mechanical data vanished, \EEE
of the structure consisting of the two adhering bodies, perfectly
bonded through the  adhesive layer. \EEE The analysis was carried out  under the further assumption that the total mass of the
adhesive layer was vanishing.  Hence, \EEE  the limit model  obtained in \cite{LLOO} describes
for the dynamics of two  adhering bodies subject to a mechanical constraint along the surface $S$ the layer shrinks to. Its constitutive equation is
of the same type as that for the layer (nonlinear viscoelastic of Kelvin-Voigt type).
\par
In this paper, we aim to
extend the  asymptotic \EEE analysis in  \cite{LLOO}
 by considering the case in which the total mass remains strictly positive;  indeed, this is what the term `heavy' in the title refers to.
 As in  \cite{LLOO},   the cornerstone of our  analysis will be   the reformulation of the
 original
problem,
in which the interface is given with a positive thickness,
  in terms of a nonlinear evolution equation  in a Hilbert space of admissible states with finite mechanical energy,  governed by a suitable maximal monotone operator. Our convergence result shall then be  obtained by resorting to  a nonlinear version of Trotter's theory of approximation of semigroups of operators acting on variable Hilbert spaces, see \cite{ILM09}.
  \EEE
  Albeit relying on the same theoretical tools as those of \cite{LLOO}, our analysis here is  significantly different.
 Indeed, \EEE
 since the dynamic effects in the thin layer do not disappear, the limiting contact condition between the two remaining bodies shall not only involve their displacements along the interface but an additional variable, too, which accounts for the asymptotic behavior of the layer  and whose analytical treatment  within Trotter's theory calls for suitable arguments. \EEE
  Of course, such a variable may be eliminated so that the constraint appears as viscoelastic with long memory,  cf.\ Sec.\ \ref{s:5}. \EEE
\paragraph{\bf Plan of the paper.}  In Section 2 we specify the setting of the problem, starting from the  formulation of the model when the thickness of the interface is positive. Then, in Section 3 we
\RRE recast this problem as an abstract evolution equation in a  Hilbert space, governed by a suitable maximal monotone operator.
Staying with this formulation, in Section 4 we carry out our asymptotic analysis, as the thickness of the layer between the two adhering bodies vanishes, by means of Trotter's theory. In this way we  prove the main result of this paper, \underline{\textbf{Theorem \ref{thm:main}}}.
Finally, in Section 5 we give some further comments on our result,     and hint at some  extensions.
\par
Throughout this paper, we will use the following notation.
\begin{notation}[General notation]
\label{notation-initial}
\upshape
We will denote the orthonormal basis of $\R^3$ by $(e_1,e_2,e_3)$.  Given a  vector $\xi =(\xi_1,\xi_2,\xi_3) \in \R^3$, we will use the symbol $\widehat\xi$ for $(\xi_1,\xi_2)$, so that we  will often write $(\widehat{\xi},\xi_3)$ in place of $(\xi_1,\xi_2,\xi_3)$.
The symbol $\tr A$  will denote the trace of a $\R^{3\times 3}$ matrix,   $\Sym$ the space of $(3{\times} 3)$-symmetric matrices,
equipped with the standard inner product,
and $\Lin(\Sym)$ the space of linear mappings from $\Sym$ to $\Sym$.
 Given two vectors $\xi,\, \zeta \in \R^3$, we shall denote by $\xi \symprod \zeta$ their symmetrized tensor product,
defined by
\begin{equation}
\label{def:sym-t-prod}
\xi \symprod \zeta  \text{ is the symmetric $(3{\times} 3)$-matrix  with entries } \frac12 (\xi_i \zeta_j  + \xi_j\zeta_i) \ i,j=1,\ldots, 3.
\end{equation}
 \par
 With any subset $O\subset \R^3$ we will associate its   characteristic function  $\chi_O$, defined by
 $\chi_O(x) =1$ if $x\in O$, and $\chi_O(x)=0$ if $x\in \R^3\setminus O$. 
\par
Finally, throughout the paper,  the symbol $C$ will denote various constants that may differ from one line to the other.
 \end{notation}
\section{Setup of the problem}
Let us specify the setup of our problem, namely the study of the dynamic response of a structure made up of two adhering bodies connected by a thin adhesive layer, subject to a given load. First of all, the reference configuration of the structure is a bounded connected open subset
$\Omega \subset \R^3$ with a  Lipschitz boundary $\partial\Omega$.
Hereafter, we will denote by $S$ the  set
\begin{equation}
\label{contact-surf}
 S: =  \Omega \cap
\{ (x_1,x_2,x_3)\in \R^3\, : \ x_3=0 \} \text{ and assume that its Hausdorff measure } \calH^2(S)>0. \EEE
\end{equation}
 In what follows, we shall identify $S$ with its projection onto $\R^2$ and therefore treat it as a subset of $\R^2$. \EEE
\begin{notation}
\label{not-for-traces}
\upshape
For a function $u\in H^1(\Omega{\setminus} S; \R^3)$, we will denote by
$\trp{u} $  ($\trm u$, respectively),  its restriction  to the open set
 $\Omega^\pm: = \{x=(x_1,x_2,x_3)\in \Omega \, : \, \pm x_3>0\}$,
 which is a function in $H^1(\Omega^\pm)$. 
The symbols $\gamma_{S}(\trp u)$ and  $\gamma_{S}(\trm u)$  will denote the traces of \EEE
$\trp u$  and $\trm u$, respectively,  on the set $S$.
Moreover, we will use the notation
\begin{equation}
\label{jumps&means}
\text{jump of $u$ across $S$:} \qquad
\JUMP{u}: = \gamma_{S}(u^+) - \gamma_{S}(u^-)\,.
\end{equation}
\end{notation}
Throughout the paper, we will assume that there exists $\eps_0>0$ 
such that
\begin{equation}
\label{Beps0}
B_{\eps_0} : = \{ x = (x_1,x_2,x_3) \in \Omega\, : \ |x_3| <\eps_0 \} \text{ is equal to  } S \times (-\eps_0,\eps_0)\,.
\end{equation}
For $0<\eps<\eps_0$, we will  assume that the adhesive occupies the layer
 $B_\eps : = S \times (-\eps,\eps)$, while the two adhering bodies shall occupy  the sets  $ \Omega_\eps^{\pm}: = \{ x = (x_1,x_2,x_3) \in \Omega\, : \ \pm  x_3>\eps\} $. We let
$
\Omega_\eps : =   \Omega_\eps^+ \cup \Omega_\eps^- = \Omega \setminus \overline{B}_\eps\,.
$
 We will use the notation
\begin{equation}
\label{Beps0-pm}
S_\eps^{\pm}: = \{ x\in \Omega \, : \ x_3 = \pm \eps\}, \qquad B_\eps^+ := S \times (0,\eps), \qquad B_\eps^-: = S\times (-\eps, 0)
\end{equation}
and assume that adhesive and adhering bodies are \emph{perfectly stuck} together along $S_\eps: = S_\eps^+ \cup S_\eps^-$.
This means that the jumps across $S_\eps$ both of the displacement $u$ and of the normal stress $\sigma e_3$ are zero, cf.\ \eqref{PDE-sigma-nojump} and \eqref{PDE-u-nojump} below.
\par
We consider a partition of $  \partial \Omega  =\Gdir\cup\Gneu$
such that $\Gdir$ has positive  two-dimensional \EEE Hausdorff measure  and positive distance from $B_{\eps_0}$; we \EEE  assume that, during the time interval $(0,T)$,
the structure is clamped on $\Gdir$ and subjected to  volumetric and  surface forces (on  $\Gneu: = \partial\Omega \setminus \Gdir$),
with densities $f$ and $g$, respectively. 
  We let
$\Gdir^{\pm}: = \Gdir \cap \{ \pm x_3 >0\}$.
%
\par
The adhering bodies are modeled as linearly elastic materials with a strain energy density
$W$ such that
\[
\begin{gathered}
W(x,e) = \frac12 a(x) e \cdot e  \qquad \foraa x \in \Omega \ \text{ and for all } e \in \Sym, \quad \text{with }
\\
a \in L^\infty (\Omega;\Lin(\Sym)) \text{ such that } \exists\, \alpha,\, \beta>0  \ \foraa x \in \Omega \
 \text{for all } e \in \Sym \, : \quad \alpha |e|^2 \leq a(x) e \cdot e  \leq \beta|e|^2\,.
\end{gathered}
\]
The adhesive is assumed homogeneous, isotropic, and `viscoelastic of Kelvin-Voigt generalized type'. Its strain energy density reads as
\[
  W_{\lambda,\mu} (e) : =  \frac{\lambda}2 |\mathrm{tr}(e)|^2 +
\mu  |e|^2 \quad \text{for all } e \in \Sym, \quad\text{with } \lambda,\, \mu>0\hbox{ the Lam\'e constants}.
\]
We will denote by $\rmD W_{\lambda,\mu}(e)$ its differential at any $e\in \Sym$.
 Observe that 
\begin{equation}\label{ellittico}
2 W_{\lambda,\mu} (e) =  \rmD W_{\lambda\mu}(e) \cdot e\geq  2\mu |e|^2 \qquad \text{for all } e \in \Sym.
 \end{equation}
 \EEE

Dissipation in the adhesive is modeled through a dissipation potential $\calD : \Sym \to [0,\infty)$ such that   $\calD$  is convex and fulfils
\begin{equation}\label{dissipation}
\exists\, p\in [1,2],\  \ \exists\, \alpha',\, \beta'>0 \ \ \forall\, e \in \Sym\, : \quad \alpha'|e|^p\leq \calD(e) \leq  \beta' (|e|^p{+}1); \EEE 
\end{equation}
 we will denote by $\partial \calD : \Sym \rightrightarrows \Sym$ its subdifferential in the sense of convex analysis.
Indeed, in system \eqref{PDE-original} below the functional $\calD$ shall be multiplied by a positive constant $b$
 that accounts for the intensity of viscous effects. \EEE
Finally, we assume that  the density $\gamma$ of the structure
 takes two
different positive values in $\Omega_\eps$ and $B_\eps$,
namely
\begin{subequations}
\label{density-gamma}
\begin{align}
&
\label{varrho} \gamma(x) = \begin{cases} \rho^*(x)  &  \text{for
a.e. } x \in \Omega_\eps,
\\
\rho & \text{for a.e. } x \in B_\eps,
\end{cases} \qquad \text {with }
\\
&
\label{rho-star}
\rho^*:\Omega \to (0,\infty) \text{ a measurable function s.t. } \quad \exists\,  0 <\bar{\rho}_m<\bar{\rho}_M \ \foraa x \in \Omega\, : \quad
\bar{\rho}_m\leq \rho^*(x) \leq \bar{\rho}_M\,.
\end{align}
\end{subequations}
\par
 Then,   
the model for the dynamic response of the  structure, in the case the thin adhesive layer still  has a  `positive thickness', is described by the
following PDE system.
\begin{subequations}
\label{PDE-original}
\begin{align}
& \label{PDE-u} \gamma u_{tt}-\dive(\sigma) =  f && \text{in } \Omega
\times (0,T),
\\
\label{PDE-sigma-b}
& \sigma = a(x)e (u) && \text{in } \Omega_\eps \times (0,T), &&
\\
\label{PDE-sigma-s}
& \sigma = \lambda \mathrm{tr}(e(u))\mathbb{I} +2\mu e(u) + b
\xi  && \text{in } B_\eps \times (0,T), && \text{with } \xi \in \partial \calD(e(u_t)),
\\
\label{PDE-sigma-nojump}
&\JUMP{\sigma e_3}=0   && \text{on } S_\eps^\pm \times (0,T), &&
\\
\label{PDE-u-nojump}
&\JUMP{u}=0   && \text{on } S_\eps^\pm \times (0,T), &&
\\
\label{Dir}
&u=0 && \text{on } \Gdir \times (0,T), &&
\\
\label{Neu}
& \sigma n = g && \text{on } \Gneu \times (0,T), &&
\end{align}
\end{subequations}
where
$\mathbb{I}$ denotes the identity matrix and  $e(u)$ the symmetric linearized strain tensor related to the displacement vector $u$, defined by $e_{ij}(u)=\frac 12 (\partial_j u_{i}+\partial_i u_{j})$, $ i,j=1,\ldots,3$.
 We will supplement    system \eqref{PDE-original}  with the initial conditions
\begin{equation}
\label{Cauchy} u(0) = u_0, \qquad u_t(0)= v_0 \qquad \text{ in }
\Omega.
\end{equation}
 Note that \RRE the strong formulation \eqref{PDE-original} of the   problem in the case the
  thickness of the adhesive layer is strictly positive \EEE indeed corresponds to the formulation of the momentum balance equations written in the two bulk domains. However, in what follows we will be able to  provide an asymptotic result only for a variational (weak) formulation of system  \eqref{PDE-original} with the Cauchy conditions \eqref{Cauchy}, namely\EEE
\\
\textbf{Problem (P):}
{\itshape Find $u: \Omega\times [0,T]\to \R^3$ sufficiently smooth fulfilling \eqref{Dir}, \eqref{Cauchy},
and such that there exists $\xi \in \partial \calD(e(u_t))$ satisfying}
\begin{equation}
\label{Ps}
\begin{aligned}
&
\int_\Omega \gamma u_{tt} \cdot v \dd x +\int_{\Omega_\eps} a e(u) \cdot e(v) \dd x +\int_{B_\eps} \rmD W_{\lambda\mu}(e(u)) \cdot e(v) \dd x
+b\int_{B_\eps} \xi \cdot e(v) \dd x
\\
&
=\int_\Omega f \cdot v \dd x +\int_{\Gneu} g \cdot v \dd \mathcal{H}^2(x)
\end{aligned}
\end{equation}
{\itshape for all $v$ sufficiently smooth in $\Omega$ and vanishing on $\Gdir$. }
\par
In the next section, we will show that Problem $(P)$ has a unique solution in a suitable sense.
In Sec.\ \ref{s:4} we will then
determine the asymptotic behavior of the solutions to Problem (P) when the quintuple of  geometrical and mechanical data  $(\eps,\lambda,\mu,b,\rho)$
that characterize the structure is regarded as a quintuple of  positive parameters $\nq n: = \qui n$,
 suitably \EEE converging to a limit  $\nq \infty$
 (cf.\ the upcoming Hyp.\ \ref{hyp:params}). \EEE
\section{Existence and uniqueness of solutions to Problem $(P)$}
\label{s:3}
We will rigorously prove our  existence result for  Problem $(P)$
 relying on the, by now classical, results from \cite{Brezis73}.
For the
  asymptotic analysis we shall
 resort \EEE to a nonlinear version of Trotter's theory of approximation of semigroups, acting on \emph{variable} Hilbert spaces. This approach in fact relies on a reformulation of system \eqref{Ps}  as an  abstract evolutionary equation  involving semigroups on suitable Hilbert spaces. In what follows, 
 we recapitulate this formulation,  as proposed in \cite{LLOO}, and recall the existence result
  proved therein, cf.\ Theorem \ref{thm:LLOO} ahead.  Since we will keep the  quintuple of parameters $q= \quiname$ \emph{fixed} in this section, in the following lines
 we shall not highlight the dependence of the solution $u$, and of the functionals/operators/spaces entering into the variational formulation of Problem $(P)$, on $q$.
\par
From now on we will assume that the body and the surface forces fulfill
\begin{equation}
\label{ass-f&g}
f \in \BV([0,T];L^2(\Omega;\R^3)), \quad g \in  W^{2,\infty}(0,T; L^2(\Gneu;\R^3)).
\end{equation}
Here and in what follows, $\BV([0,T];X)$ shall denote the subspace   of $L^1(0,T;X)$ consisting of all the elements whose distributional derivative with respect to time is a bounded Radon measure on $(0,T)$,  valued in $X$.
  Along the footsteps of  \cite{LLOO} we seek a solution
  to Problem $(P)$
of the form
\begin{equation}
\label{decomp-u}
u= \ue + \ur.
\end{equation}
In \eqref{decomp-u}, $\ue$ is    the unique solution of the `stationary' problem
\begin{subequations}
\label{problem-ue}
\begin{equation}
\label{p-ue-1}
\ue(t) \in H_{\Gdir}^1 (\Omega;\R^3),  \qquad
\varphi(\ue(t),v) = L(t)(v) \quad \text{for all } v \in  H_{\Gdir}^1 (\Omega;\R^3) \text{ and all } t \in [0,T],
\end{equation}
where $ H_{\Gdir}^1 (\Omega;\R^3) $ denotes the closed subspace of $ H^1 (\Omega;\R^3) $
 consisting of the elements with zero  trace on $\Gdir;$
hereafter, the notation $H_{\Gamma}^{1}(G;\R^3)$ will be systematically used for any
$G\subset \R^3$, $\Gamma \subset \partial G $. 
Furthermore,
\EEE
\begin{align}
&
\label{p-ue-2}
\varphi(v,v') : = \int_{\Omega_\eps} a e(v) \cdot e(v') \dd x +
  \int_{B_\eps} \rmD W_{\lambda,\mu} (e(v)) \cdot e(v') \dd x  \quad \text{for all } v,\, v '  \in H_{\Gdir}^1 (\Omega;\R^3),
  \\
  &
\label{p-ue-3}
L(t)(v): = \int_{\Gneu} g(x,t) \cdot v(x) \dd \calH^2(x) \quad \text{for all } v \in  H_{\Gdir}^1 (\Omega;\R^3) \text{ and all } t \in [0,T].
  \end{align}
  \end{subequations}
 Since, for every fixed $t\in[0,T]$ the operator $g(t)\mapsto \ue(t)$ is linear and continuous from $L^2(\Gneu;\R^3)$
 to $H_{\Gdir}^1 (\Omega;\R^3)$, we find that
  \begin{equation}
  \label{ue}
  \ue \in W^{2,\infty}(0,T; H_{\Gdir}^1(\Omega;\R^3))\,.
  \end{equation}
  \par
   Given such $\ue$, the remaining part $\ur$  of $u$ can be  obtained by solving the following evolutionary problem:
\\  {\itshape Find $ \ur: \Omega\times [0,T]\to \R^3$ sufficiently smooth fulfilling \eqref{Dir},  the Cauchy conditions
 $
  \ur(0) = u_0 -\ue(0)$ and  $\partial_t \ur(0)=   v_0- \partial_t \ue(0) $
 in $\Omega$,
and such that there exists
 $\xi:B_\eps \to \R$ satisfying  $\xi \in \partial \calD(e(\partial_t (\ur+\ue)))$ a.e.\ in $B_\eps$ and }
  \begin{equation}
\label{variational-equation-url}
\begin{aligned}
&
\int_{\Omega} \gamma  \partial_{tt}(\ur (t){+}\ue(t)) \cdot v \dd x + \int_{\Omega_\eps} a e(\ur(t)) \cdot e(v) \dd x +
  \int_{B_\eps} \rmD W_{\lambda,\mu} (e(\ur(t))) \cdot e(v) \dd x
+ b\int_{B_\eps} \xi  \cdot e(v) \dd x
\\
&
= \int_\Omega f(t) \cdot v \dd x
\qquad \text{for all } v \in H_{\Gdir}^1 (\Omega;\R^3).
\end{aligned}
\end{equation}
\par
In \cite{LLOO}, the existence of a (unique) solution to the Cauchy problem for \eqref{variational-equation-url} 
was proved by reformulating it as an  abstract evolutionary  problem. Similarly arguing, we introduce the following problem
\begin{equation}
\label{abstract-problem}
\begin{cases}
\frac{\dd }{\dd t}\abv(t) + \abop \abv(t)  \ni  \abfo(t) \qquad \text{ in } \absp \ \  \foraa\, t \in (0,T),
\\
\abv(0) = \abv_{0},
\end{cases}
\end{equation}
with $\absp$ a (separable) Hilbert space (that will turn out to be the space of possible states with finite mechanical - i.e., (kinetic{+}strain) - energy),   $\abop : \absp  \rightrightarrows \absp$  a maximal monotone (multivalued) operator, and
 $\abfo \in L^1 (0,T;\absp)$.
More precisely, we consider  the space 
\begin{subequations}
\label{Hilb}
\begin{equation}
\label{Hilbert}
\absp 
: = H^1_{\Gdir}(\Omega;\R^3) \times L^2 (\Omega;\R^3)  
\end{equation}
endowed with the following inner product and induced norm
\begin{equation}
\label{inner-norm}
\begin{aligned}
&
(\abv,\abv') 
: = \varphi(u,u') + k(v,v')\,, \quad  |\abv|^2=(\abv,\abv)
\qquad \text{for all }  \abv = (u,v),\, \abv' = (u',v')  \in \absp,
\\
& \qquad
    \quad \text{with } k(v,v') : = \int_{\Omega} \gamma  v v' \dd x  \text{ for all }  v,v'\in  L^2 (\Omega;\R^3)\,,
 \end{aligned}
 \end{equation}
 \end{subequations}
 and $\varphi$   defined in \eqref{p-ue-2}.
Observe that the norm induced   by the bilinear form $\varphi$ is equivalent to the standard Sobolev norm
on $H_{\Gdir}^1(\Omega;\R^3)$   by Korn's inequality.
We introduce the operator $\abop \colon \absp  \rightrightarrows   \absp$,
with domain
\begin{subequations}
\label{opA}
\begin{equation}
\label{domain-op}
\begin{aligned}
\rmD(\abop) \colon =   \Big\{   \abv = (u,v) \in \absp\, : \
&   (1) \  v \in  H_{\Gdir}^1(\Omega;\R^3)
\\
&
   (2)  \  \exists\, (w,\xi) \in L^2 (\Omega;\R^3)  \times  L^2(B_\eps;\R^3)  \EEE \text{ with } \xi \in   \partial \calD(e(v))   \text{ a.e.\ in } B_\eps,
 \text{ s.t. }
   \\
   & \qquad   k(w,v')   +  \varphi (u,v') + b \int_{B_\eps} \xi \cdot e(v') \dd x =0 \text{ for all } v' \in     H_{\Gdir}^1(\Omega;\R^3)
\Big\}\,,
\end{aligned}
\end{equation}
defined at every $\abv = (u,v)$ by
\begin{equation}
\label{ope}
\abop \abv : = \left(\begin{array}{cc}
-v
\\
0
\end{array} \right) +
 \left\{  \left(\begin{array}{cc}
0
\\
-w
\end{array} \right)  \, : \  \text{$w$ as in  \eqref{domain-op}(2)} \right\}. \EEE
\end{equation}
\end{subequations}
 For later use, we recall the following result from \cite{LLOO}, establishing  a link between  the equation defining the resolvent of $\abop$ and the Euler-Lagrange equation for the
minimization of the functional
\begin{equation}
\label{energy-J-eps}
\begin{aligned}
\mathcal{J}:  H_{\Gdir}^1(\Omega;\R^3) \to \R, \quad  \mathcal{J}(v): = \frac12
 k(v,v) -  k(\psi_2,v) +\frac12 \varphi (v,v)   + \varphi(\psi_1,v) +
  b \int_{B_\eps} \calD(e(v)) \dd x,
 \end{aligned}
\end{equation}
with $(\psi_1,\psi_2) \in \absp $ given.
\begin{proposition}[\cite{LLOO}, Prop.\ 3.1]
\label{prop:fromLLOO}
 The operator $\abop$ is  maximal monotone on $\absp$ and  its resolvent $(\mathsf{I}{+}\abop)^{-1}: \absp \to \absp$ is given
 \RRE for all $(\psi_1,\psi_2) \in  \absp$ by
$
\abv  =(\mathsf{I}{+}\abop)^{-1} (\psi_1, \psi_2)
$
if and only if
 \begin{equation}
   \label{def-resolv}
    \text{$\abv=  (\bar u,\bar v) $ with $\bar u$ and $\bar v$ fulfilling } \ \EEE
\begin{cases}
  \mathcal{J}(\bar v) \leq \mathcal{J}(v) \qquad \text{for all } v \in H_{\Gdir}^1(\Omega;\R^3),
 \\
 \bar u = \bar v +\psi_1\,.
 \end{cases}
 \end{equation}
\end{proposition}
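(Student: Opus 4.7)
\medskip

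\noindent\textbf{Proof proposal.} The plan is to establish monotonicity by a direct computation exploiting the symmetry of $\varphi$, and to obtain maximality together with the characterization \eqref{def-resolv} by recognizing $(\mathsf{I}{+}\abop)^{-1}(\psi_1,\psi_2)$ as the Euler--Lagrange problem for the minimization of $\mathcal{J}$. Both directions of the characterization will be proved by the same algebraic manipulation.

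\emph{Monotonicity.} Take $\abv_i=(u_i,v_i)\in \rmD(\abop)$ for $i=1,2$, with corresponding elements $w_i\in L^2(\Omega;\R^3)$ and $\xi_i\in L^2(B_\eps;\Sym)$ as in \eqref{domain-op}(2); then $\abop \abv_i\ni (-v_i,-w_i)$, and
\[
(\abop\abv_1{-}\abop\abv_2,\abv_1{-}\abv_2)=-\varphi(v_1{-}v_2,u_1{-}u_2)-k(w_1{-}w_2,v_1{-}v_2).
\]
Subtracting \eqref{domain-op}(2) for $i=1,2$ and testing with $v'=v_1{-}v_2\in H^1_{\Gdir}(\Omega;\R^3)$, we express $k(w_1{-}w_2,v_1{-}v_2)$ in terms of $-\varphi(u_1{-}u_2,v_1{-}v_2)-b\int_{B_\eps}(\xi_1{-}\xi_2)\cdot e(v_1{-}v_2)\dd x$. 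Since both $a$ and $\rmD W_{\lambda,\mu}$ induce \emph{symmetric} bilinear forms (hence $\varphi$ is symmetric), the two $\varphi$-contributions cancel, leaving $b\int_{B_\eps}(\xi_1{-}\xi_2)\cdot e(v_1{-}v_2)\dd x\ge 0$ by monotonicity of $\partial\calD$.

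\emph{Maximality and characterization of the resolvent.} Given $(\psi_1,\psi_2)\in\absp$, I would show that the inclusion $(\psi_1,\psi_2)\in(\mathsf{I}{+}\abop)(\bar u,\bar v)$ is equivalent, via the substitutions $\bar u=\bar v+\psi_1$ and $\bar w=\bar v-\psi_2$ forced by the first component, to the Euler--Lagrange inclusion
\[
k(\bar v,v')+\varphi(\bar v,v')+b\!\int_{B_\eps}\!\xi\cdot e(v')\dd x=k(\psi_2,v')-\varphi(\psi_1,v')\quad\forall\,v'\in H^1_{\Gdir}(\Omega;\R^3),
\]
for some $\xi\in\partial\calD(e(\bar v))$ a.e.\ in $B_\eps$. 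Because $\mathcal{J}$ from \eqref{energy-J-eps} is convex, this inclusion is precisely the optimality condition for $\bar v$ to minimize $\mathcal{J}$ over $H^1_{\Gdir}(\Omega;\R^3)$. Existence of a unique minimizer follows by the direct method: $\mathcal{J}$ is strictly convex (since $k$ is positive definite on $L^2$ and $\varphi$ is coercive on $H^1_{\Gdir}$ by Korn's inequality together with the Dirichlet condition on $\Gdir$), lower semicontinuous (by continuity of the quadratic part and Fatou's lemma applied to the nonnegative convex integrand $\calD$), and coercive on $H^1_{\Gdir}(\Omega;\R^3)$ thanks to the quadratic control furnished by $\varphi$ and the nonnegativity of the $\calD$-term, which dominate the linear perturbations $k(\psi_2,\cdot)$ and $\varphi(\psi_1,\cdot)$.

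\emph{Reading off the subgradient and main obstacle.} The main technical point is to identify the subdifferential of the integral functional $v\mapsto b\int_{B_\eps}\calD(e(v))\dd x$ with a pointwise selection $\xi\in\partial\calD(e(\bar v))$ lying in $L^2(B_\eps;\Sym)$. Relying on Rockafellar's theorem for integral functionals, and on the growth \eqref{dissipation} (which, since $p\in[1,2]$, yields $|\partial\calD(e)|\lesssim 1+|e|^{p-1}$ and hence $\xi\in L^{2}(B_\eps;\Sym)$ whenever $e(\bar v)\in L^2$), the Euler--Lagrange condition for $\bar v$ produces exactly an $L^2$-selection $\xi$ satisfying the displayed equation. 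Setting $\bar u=\bar v+\psi_1$ and $\bar w=\bar v-\psi_2\in L^2(\Omega;\R^3)$, one verifies that $(\bar u,\bar v)\in \rmD(\abop)$ with $\bar w$ as in \eqref{domain-op}(2), so that $(\psi_1,\psi_2)\in(\mathsf{I}{+}\abop)(\bar u,\bar v)$. Uniqueness of $\bar v$ (and thus of $(\bar u,\bar v)$) follows from strict convexity of $\mathcal{J}$. This simultaneously proves the surjectivity of $\mathsf{I}{+}\abop$ on $\absp$ --- hence, combined with monotonicity, the maximal monotonicity of $\abop$ by Minty's theorem --- and the equivalence \eqref{def-resolv}.
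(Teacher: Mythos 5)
Your argument is correct and is exactly the canonical approach: monotonicity by symmetry of $\varphi$ plus monotonicity of $\partial\calD$, and maximality via Minty's theorem after recognizing the resolvent equation as the Euler--Lagrange condition for the strictly convex, coercive functional $\mathcal{J}$. The present paper does not reproduce a proof of this proposition (it simply cites \cite[Prop.~3.1]{LLOO}), but the route you take is the one implicit in the statement itself --- the explicit link between the resolvent and the minimization of $\mathcal{J}$ --- and is the natural one for this class of operators, so no substantive deviation to report.
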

\par
 We now consider
   the Cauchy problem \eqref{abstract-problem}  with $\absp$ from \eqref{Hilbert}, $\abop$ from \eqref{opA}, and the data $\abfo$ and
$\abv_0$ given by
\begin{equation}
\label{data-Cauchy-abs}
\abfo =  \left(-\partial_t \ue, \frac{f}{\gamma}\right), \qquad \abv_0 = (u_0,v_0) - (\ue(0), 0) \text{ such that } \abv_0 \in  \rmD(\abop).
\end{equation}
We denote by
 $\abv^{\mathrm{r}}  =  (\ur, v^{\mathrm{r}}) $ the  solution to \eqref{abstract-problem}, which exists, unique, in $ W^{1,\infty} (0,T;\absp)$ thanks to, e.g.,   \EEE
 \cite[Prop. 3.4]{Brezis73}.
By the careful definition of
$\absp$, $\abop$, and of the data $\abfo$ and $\abv_0$,
it can be easily checked that   $\ur$ \RRE and that   $v^{\mathrm{r}}=\partial_t (\ur+\ue)$ \EEE  solve
the Cauchy problem for \eqref{variational-equation-url}.
\par
\RRE Setting $u: = \ur + \ue$,
we ultimately find the unique solution to Problem (P). This is summarized in the following result. \EEE
\begin{theorem}[\cite{LLOO}, Thm.\ 3.1]
\label{thm:LLOO}
Let the data $f,\, g$ comply with \eqref{ass-f&g}  and
let $ (u_0,v_0)  \in (\ue(0),0) +  \rmD(\abop) $.
 Then,
  the Cauchy problem \eqref{abstract-problem} with $\absp$, $\abop$, and the data  $\abfo$ and $\abv_0$  from \eqref{Hilb}, \eqref{opA}, and \eqref{data-Cauchy-abs}, respectively,
 has a unique solution $\abv^{\mathrm{r}} = (\ur,\vr) \in W^{1,\infty} (0,T;\absp)$. Hence, there exists a unique $u\in W^{1,\infty} (0,T;H_{\Gdir}^1(\Omega;\R^3)) \cap W^{2,\infty} (0,T;L^2(\Omega;\R^3))$, given by   $u= \ue+\ur $, that satisfies   \eqref{Ps} for all $v\in H_{\Gdir}^1(\Omega;\R^3)$  and for almost all  $t\in (0,T)$  and  complies with \EEE  the Cauchy conditions \eqref{Cauchy}.
\end{theorem}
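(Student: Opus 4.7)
The plan is to apply the classical Brezis theory of Cauchy problems for maximal monotone operators in Hilbert spaces, in the form of \cite[Prop.~3.4]{Brezis73}, to the abstract problem \eqref{abstract-problem}, and then translate the abstract solution back into a solution of Problem $(P)$.

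First, I would verify the hypotheses of the Brezis result. Maximal monotonicity of $\abop$ on $\absp$ is exactly Proposition \ref{prop:fromLLOO}. The assumption $(u_0,v_0) \in (\ue(0),0) + \rmD(\abop)$ gives $\abv_0 = (u_0-\ue(0), v_0) \in \rmD(\abop)$. For the forcing $\abfo = (-\partial_t \ue, f/\gamma)$: \eqref{ue} yields $\partial_t \ue \in W^{1,\infty}(0,T;H^1_{\Gdir}(\Omega;\R^3))$, while \eqref{ass-f&g} gives $f \in \BV([0,T];L^2(\Omega;\R^3))$; since $\gamma$ is bounded above and below by positive constants thanks to \eqref{density-gamma}, division by $\gamma$ acts as a bounded linear operator on $L^2(\Omega;\R^3)$, so that $f/\gamma \in \BV([0,T];L^2(\Omega;\R^3))$. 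Consequently $\abfo \in \BV([0,T];\absp)$, which is precisely the regularity needed to obtain a unique Lipschitz solution $\abv^{\mathrm{r}} = (\ur, \vr) \in W^{1,\infty}(0,T;\absp)$ of \eqref{abstract-problem}.

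Next, I would decode the abstract inclusion pointwise in $t$. The first component of $\frac{\dd}{\dd t}\abv^{\mathrm{r}} + \abop \abv^{\mathrm{r}} \ni \abfo$ reads $\partial_t \ur - \vr = -\partial_t \ue$, so that $\vr = \partial_t(\ur + \ue)$. Using \eqref{domain-op}--\eqref{ope}, the second component produces, for a.a.\ $t$, a pair $(w,\xi) \in L^2(\Omega;\R^3) \times L^2(B_\eps;\R^3)$ with $\xi \in \partial \calD(e(\vr))$ a.e.\ in $B_\eps$ and $\partial_t \vr - w = f/\gamma$, satisfying the test-function identity
\[
k(w,v') + \varphi(\ur,v') + b\int_{B_\eps} \xi \cdot e(v') \dd x = 0 \qquad \text{for all } v' \in H^1_{\Gdir}(\Omega;\R^3).
\]
Substituting $w = \partial_t \vr - f/\gamma$, multiplying through by $\gamma$ in the $k$-term, and recalling $\vr = \partial_t(\ur + \ue)$, one recovers exactly \eqref{variational-equation-url} with $\xi \in \partial \calD(e(\partial_t(\ur+\ue)))$ in $B_\eps$.

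Finally, setting $u := \ue + \ur$, adding \eqref{p-ue-1} tested against any $v \in H^1_{\Gdir}(\Omega;\R^3)$ to \eqref{variational-equation-url} absorbs the surface load $L(t)(v)$ and yields \eqref{Ps}. The initial conditions \eqref{Cauchy} follow from $\abv^{\mathrm{r}}(0) = \abv_0$, which gives $\ur(0) = u_0-\ue(0)$, so $u(0) = u_0$, and $\vr(0) = v_0 = \partial_t u(0)$. The claimed regularity $u \in W^{1,\infty}(0,T;H^1_{\Gdir}(\Omega;\R^3)) \cap W^{2,\infty}(0,T;L^2(\Omega;\R^3))$ is assembled from $\ur \in W^{1,\infty}(0,T;H^1_{\Gdir})$, $\vr \in W^{1,\infty}(0,T;L^2)$, and \eqref{ue}. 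Uniqueness for Problem $(P)$ is inherited from that of the abstract Cauchy problem: the elliptic problem \eqref{p-ue-1} uniquely determines $\ue$, and any solution $u$ of $(P)$ forces $(\ur, \partial_t u) = (u-\ue, \partial_t u)$ to solve \eqref{abstract-problem} with data \eqref{data-Cauchy-abs}. The main point of care is the second step, where the nonsmooth inclusion $\xi \in \partial \calD(e(\vr))$ and the identification $w = \partial_t \vr - f/\gamma$ must be tied together coherently with the weak form; everything else reduces to a verification of hypotheses.
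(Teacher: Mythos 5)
Your proposal is correct and follows exactly the argument the paper uses (which is inherited from \cite{LLOO}): verify the hypotheses of \cite[Prop.~3.4]{Brezis73} for the abstract Cauchy problem in the Hilbert space $\absp$ with the maximal monotone operator $\abop$, decode the two components of the inclusion to recover $\vr = \partial_t(\ur+\ue)$ and the variational identity \eqref{variational-equation-url}, and then add the stationary equation \eqref{p-ue-1} to obtain \eqref{Ps}. You have merely spelled out the details that the paper calls ``easily checked''; no genuine difference in method or in any key lemma.
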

\section{Asymptotic analysis}
\label{s:4}
In this section we address the asymptotic behavior of a sequence $(u_n)_n$ of solutions to Problems $(P_n)$ corresponding to a sequence $\nq n= \qui n$ of mechanical and geometrical parameters that satisfy the suitable  conditions.
As the overall density of the structure  depends on the parameter $\rho_n$, we shall denote it by $\gamma_n$.
 The requirements in Hypothesis  \ref{hyp:params} below  in particular \EEE
 reflect  the
fact that the adhesive  layer has
 vanishing  thickness and that the total mass of the adhesive layer remains strictly positive.
 \begin{hypothesis}
\label{hyp:params}
We suppose that
\begin{equation}
\label{lim-quadruple}
\exists \, \lim_{n\to\infty} \nq n = \nq \infty = \qui \infty
\end{equation}
such that
\begin{enumerate}
\item $  \nq \infty \in \{0\} \times [0,\infty) \times [0,\infty) \times [0,\infty]\times \{\infty\}$;
\item $\exists\, (\bar\lambda,\bar\mu) \in [0,\infty]\times [0,\infty]$ s.t.\ $(\bar\lambda,\bar\mu) = \lim_{n\to\infty} \left( \frac{\lambda_n}{\eps_n}, \frac{\mu_n}{\eps_n}\right)$; 
\item $\lim_{n\to\infty} b_n \eps_n =0$ and $\exists\, \bar b \in [0,\infty]$ s.t. $\bar b = \lim_{n\to\infty} \frac{b_n}{\eps_n^{p-1}}$,  where $p\in [1,2]$ is given as in \eqref{dissipation};
\item $\bar \mu \in (0,\infty]  $ if $\min \{ \mathcal{H}^2(\Gdir^\pm)\} =0$;
\item $\limsup_{n\to\infty} \frac{\eps_n^2}{\mu_n} <\infty$;
\item $\exists\, \bar\rho \in (0,\infty)$ s.t.\ $\bar\rho = \lim_{n\to\infty} \rho_n \eps_n$.
\end{enumerate}
\end{hypothesis}
A comparison between Hypothesis \ref{hyp:params} and the analogous assumption \cite[(H2)]{LLOO}
reveals that the only difference lies in condition (6) on the asymptotic behavior of the sequence $(\rho_n)_n$;
 condition (6) does indeed encompass the fact that the total mass of the adhesive layer is strictly positive, in the limit.
 As we will see,
this  will make the limiting problem significantly different from that considered in \cite{LLOO}. \EEE 
\par
Actually, for the sake of simplicity and to highlight the main points in our analysis, in this  paper \EEE   we shall confine the discussion to the case in which  $(\bar\lambda,\bar\mu) \in  [0,\infty) \times (0,\infty)$. 
In the upcoming Section \ref{ss:4.1}, with sort of  heuristic arguments  we propose a candidate PDE system for the description of the limiting behavior of the structure under Hypothesis
\ref{hyp:params} on the parameters $(q_n)_n$. As we will see,  such a  system may be somehow `guessed', also based on the analysis previously
performed  in \cite{LLOO},   cf.\ also \cite{Licht-Weller}.  \EEE

In accordance with the approach developed in Sec.\ \ref{s:3},  the limiting system \EEE will be formulated as an evolutionary equation in a suitable Hilbert space, governed by a nonlinear maximal  monotone operator.
Next, in Sec.\ \ref{ss:4.2} we will carry out the asymptotic analysis via (a version of) Trotter's theory of approximation of semigroups on variable Hilbert spaces.
\subsection{A candidate for the limiting behavior}
\label{ss:4.1}
The functional framework
for the limiting problem is naturally obtained by studying the behavior of sequences $(\abv_n)_n = (u_n,v_n)_n$ of possible mechanical states, uniformly bounded in the Hilbert spaces $\absp_n$ from
\eqref{Hilb}, namely
\begin{equation}
\label{Hilbert-spaces-n}
\begin{aligned}
&
\absp_n: = H^1_{\Gdir}(\Omega;\R^3) \times L^2 (\Omega;\R^3)  \text{ endowed with the norms $|\cdot|_n$  and the inner products }
\\
 &
 (\abv,\abv')_n 
: = \varphi_n(u,u') + k_n(v,v')
\qquad \text{for all }  \abv = (u,v),\, \abv' = (u',v')  \in \absp \qquad  \text{with }
\\
&
\begin{cases}
\varphi_n(v,v') : = \int_{\Omega_\eps} a e(v) \cdot e(v') \dd x +
  \int_{B_{\eps_n}} \rmD W_{\lambda_n,\mu_n} (e(v)) \cdot e(v') \dd x  &  \text{ for all } v,\, v '  \in H_{\Gdir}^1 (\Omega;\R^3),
  \\
 k_n(v,v') : = \int_{\Omega_{\eps_n}} \rho^* v v' \dd x  + \int_{B_{\eps_n}} \rho_n v v' \dd x  &  \text{ for all }  v,v'\in  L^2 (\Omega;\R^3)\,.
 \end{cases}
 \end{aligned}
 \end{equation}
 \EEE
Therefore,  for uniformly bounded mechanical states $(\abv_n)_n =(u_n,v_n)_n$ the  following  estimates \EEE hold for a constant
$C>0$ uniform w.r.t.\ $n\in \N$:
\begin{subequations}
\label{bounded-states}
\begin{align}
&
\label{bounded-states-1}
k_n(v_n,v_n) = \int_{\Omega_{\eps_n}} \rho^*(x) |v_n|^2 \dd x +  \int_{B_{\eps_n}} \rho_n |v_n|^2  \dd x \leq C,
\\
&
\label{bounded-states-2}
\varphi_n(u_n,u_n) = \int_{\Omega_{\eps_n}} a |e(u_n)|^2 \dd x +  \int_{B_{\eps_n}} \rmD W_{\lambda_n,\mu_n}(e(u_n)) \cdot e(u_n)  \dd x \leq C.
\end{align}
\end{subequations}
Let us now draw some conclusions from 
 \eqref{bounded-states}.
\par
First of all, observe that, via a simple change of variables estimate \eqref{bounded-states-1} may be rephrased as
\begin{equation}
\label{bounded-states-1-altern}
k_n(v_n,v_n) = \int_{\Omega_{\eps_n}} \rho^*(x) |v_n|^2 \dd x + \rho_n \eps_n \int_{B}  |\sop {\eps_n}[v_n] |^2  \dd x \leq C
\end{equation}
in terms of the  operator
 $\sop \eps $ that maps a function $v$ into the function $\sop \eps [v]$ defined by \EEE
\begin{equation}
\label{sop-eps}
\begin{aligned}
\sop \eps [v](\hat x,x_3) : = v(\hat x,\eps x_3) \quad &  \text{for all } x =(\hat x, x_3) \in B: = S \times (-1,1)
\\
&
\text{and all measurable functions $v$ on $B_\eps = S \times (-\eps,\eps)$.}
\end{aligned}
\end{equation}
 By virtue of Hypothesis \ref{hyp:params}(6)  and
condition
\eqref{rho-star} on $\rho^*$,    from the bound \eqref{bounded-states-1-altern} 
  we deduce that the pair
  \begin{enumerate}
  \item
  $( \chi_{\Omega_{\eps_n} }  v_n, \sop{\eps_n}[v_n])_n$, up to a subsequence,
weakly converge,
\begin{equation}
\label{vo-vb}
\exists\,
(\vo v,\vb v)
\in L^2(\Omega;\R^3) \times L^2(B;\R^3)\, : \qquad  \chi_{\Omega_{\eps_n} }  v_n\weakto \vo v \text{ in } L^2(\Omega;\R^3), \  \sop{\eps_n}[v_n]\weakto \vb v \text{ in }  L^2(B;\R^3).
\end{equation}
\EEE
\end{enumerate}
 Thus, we may describe the limiting kinetic state by \emph{two velocity fields} $\vo v$ and $\vb v$
that effectively represent the limiting behavior of the velocity in the adhering bodies and in the adhesive layer, respectively.
\par
Secondly, we may deduce that
there exists a pair $(\vo u,\vb u) \in H_{\Gdir}^1 (\Omega{\setminus}S;\R^3) \times L^2(B;\R^3)$ such that, up to a subsequence,
\begin{enumerate}
 \setcounter{enumi}{1}
\item  the functions $\chi_{\Omega_{\eps_n}} e(u_n)$ 
converge weakly to $e(\vo u)$ in
 $ L^2(\Omega;\R^{3 \times 3})$;
\item the traces on $S_{\eps_n}^\pm$ of $u_n$, considered as elements of $L^2(S;\R^3)$, converge  to the traces
on $S$
of $\vo{u}^\pm$ (i.e., the restrictions of $\vo u$ to $\Omega^\pm$) strongly in $L^2(S;\R^3)$;
\item the functions $\sop{\eps_n} [u_n]$ converge to $\vb u$ weakly in $L^2(B;\R^3)$.
\end{enumerate}
Let us shortly justify properties (2)--(4).
Indeed, following the lines of the proof of
  \cite[Lemma 4.2]{LLOO}, from the first two  bounds in   \eqref{bounded-states-1} and  \eqref{bounded-states-2}, via Korn's inequality and a standard diagonalization argument we infer that there exists $\vo u \in \bigcup_{\eta>0} H_{\Gdir}^1 (\Omega_\eta;\R^3)$  (with $\Omega_\eta = \Omega {\setminus} \overline{B}_\eta$) such that, up to a subsequence,  for all $\eta>0$ there holds  $u_n\weakto \vo u $ \EEE weakly in $ H^1 (\Omega_\eta;\R^3)$. In turn, there exists $e^* \in L^2(\Omega;\Sym)$ such that  $ \chi_{\Omega_{\eps_n}} e(u_n) \weakto e^*$ \EEE weakly in $ L^2(\Omega;\Sym)$. Clearly, the restriction of $e^*$ to any $\Omega_\eta $ coincides with the restriction to $\Omega_\eta$ of
  $e(\vo u) $ (i.e.\  the
   symmetric  part of the distributional gradient of $\vo u$). We thus conclude that $e(\vo u) \in L^2(\Omega;\Sym)$ (whence $\vo u \in H_{\Gdir}^1 (\Omega{\setminus}S;\R^3)$), and convergence (2) holds.
     Next, we repeat the very same arguments as in Step $2$ of the proof of  \cite[Prop.\ 4.3]{LLOO}, to conclude convergence (3).
   In order to check (4), we will first of all show that $ (\sop{\eps_n} [u_n])_n$ is bounded in  $L^2(B;\R^3)$. To this end, \EEE
    we first of all employ the key inequality
\begin{equation}
\label{e:4.1}
\begin{aligned}
&
\exists\, C>0 \ \forall\, n \in \N \  \forall\, w \in H_{\Gdir}^1(\Omega;\R^3)\, :
\\
&
\frac12 \int_{B_{\eps_n}} |w|^2
\dd x
\leq  \eps_n  \int_{S_{\eps_n}} |w|^2  \dd \hat x   + C  \eps_n^2 \EEE  \int_{B_{\eps_n}} |e(w)|^2 \dd x
+C\eps_n^2 \int_{\Omega_{\eps_n}}  |e(w)|^2 \dd x,
\end{aligned}
\end{equation}
(cf.\
\cite[(4.20)]{LLOO}),
 whence
\[
\frac12 \int_{B_{\eps_n}} |u_n|^2
\dd x \leq \eps_n \int_{S_{\eps_n}} |u_n|^2 \dd x +C\eps_n^2 \left( \int_{\Omega_{\eps_n}}  |e(u_n)|^2 \dd x{+}   \int_{B_{\eps_n}} |e(u_n)|^2 \dd x\right) \,.
\]
Now, since
\[
\int_{S_{\eps_n}} |u_n|^2 \dd x \to  \int_{S} \left( |\gamma_S (\vo{u}^+)|^2{+} |\gamma_S (\vo{u}^-)|^2\right) \dd x,
\]
we find that the first integral on the right-hand side is estimated by $C\eps_n$.
Furthermore, by \eqref{bounded-states-2} we have that
\[
\int_{\Omega_{\eps_n}}  |e(u_n)|^2 \dd x  \leq C.
\]
Finally,
\[
\eps_n^2  \int_{B_{\eps_n}} |e(u_n)|^2 \dd x \stackrel{(1)}{\leq} \frac{\eps_n^2}{\mu_n}  \int_{B_{\eps_n}} \rmD W_{\lambda_n,\mu_n} (e(u_n)) \cdot e(u_n) \dd x
  \stackrel{(2)}{\leq} C\eps_n,
\]
where (1) follows from \eqref{ellittico} and (2) from the assumption that $\tfrac{\mu_n}{\eps_n} \to \bar\mu>0$, cf.\ Hypothesis \ref{hyp:params}
  (recall that we confine here our analysis to the case $\bar\mu\in (0, +\infty)$).
All in all, we conclude that
\[
\eps_n \int_{B} |\sop{\eps_n}[u_n]|^2 \dd x  = \int_{B_{\eps_n}}|u_n|^2 \dd x \leq C \eps_n \qquad \text{whence} \qquad \int_{B} |\sop{\eps_n}[u_n]|^2 \dd x\leq C.
\]
and convergence (4) follows.  \EEE
%
%
%
%
\par
Moreover,
since
\begin{equation}
\label{newlabel}
  \int_{B_{\eps_n}} |e(u_n)|^2 \dd x =\frac{1}{\eps_n} \int_{B} |e(\eps_n, \sop{\eps_n} [u_n])|^2 \dd x, \EEE
\end{equation}
where we have
introduced the  notation
\[
e(\eps, w)_{i,j}: = \begin{cases}
\eps e(w)_{i,j} & \text{for } 1\leq i,j\leq 2
\\
\frac12 ( \eps \partial_i w_3+ \partial_3 w_i) & \text{for } 1\leq i \leq 2, \ j=3
\\
\partial_3 w_3 & \text{for } i=j=3
\end{cases}
\qquad \text{ for all } w \in H^1(B;\R^3),
\]
 the   convergence in the sense of distributions of $\sop{\eps_n}[u_n]$ implies that
\begin{enumerate}
 \setcounter{enumi}{4}
\item $e(\eps_n , \sop{\eps_n}[u_n])$ converge  to $\partial_3 \vb u  \symprod e_3$  weakly in $L^2(B;\R^3\times \R^3)$;
\item $\vb u  \in H_{\partial_3} (B;\R^3)$,
 with
\begin{equation}
\label{space-H-partial3}
 H_{\partial_3} (B;\R^3):= \{ u \in L^2(B;\R^3)\, : \  \partial_3 u  \in L^2(B;\R^3)\};
 \end{equation}
\item the traces of $\vb u $ on  
\begin{equation}
\label{s-pm}
  S^+ : =  S \times \{1\} \text{  and  } S^- : =  S \times \{-1\},
\EEE
\end{equation}
 hereafter denoted by $\gamma_{S^\pm}(\vb u) $ and treated as elements of $L^2(S;\R^3)$, coincide with  the traces on $S$ of $\vo{u}^\pm$, denoted by $\gamma_S(\vo{u}^\pm)$.
\end{enumerate}
Indeed, 
while  items (5) \& (6) are obvious, (7) follows from observing that the traces of $u_n$ on $S_{\eps_n}^\pm$ coincide with the traces of
$(\sop{\eps_n} [u_n])^\pm $ on $S^\pm$, and then taking the limit as $n\to\infty$.
\par
In view of the above considerations, we thus expect that the Hilbert space of
possible
 limiting states with finite energy will be
 \begin{subequations}
 \label{limiting-space}
 \begin{align}
 &
 \label{limiting-space-1}
 \absp: =  \Spu\times \Spv \text{ with }
 \\
 &
  \label{limiting-space-2}
 \Spu : = \{ u =(\vo u,\vb u) \in H^1(\Omega{\setminus} S;\R^3) \times H_{\partial_3} (B;\R^3) \, : \  \gamma_S ({\vo u}^\pm) = \gamma_{S^\pm} (\vb u) \},
 \\
&
 \label{limiting-space-3}
 \Spv : = \{ v =(\vo v,\vb v) \in L^2(\Omega;\R^3) \times L^2 (B;\R^3)  \},
 \end{align}
 \end{subequations}
endowed with the inner product (and related norm)
\begin{subequations}
\label{limit-inner-prod}
\begin{align}
\label{ip1}
&
(\abv,\abv') = \varphi(u,u') + k(v,v'), \quad |\abv|^2: = \varphi(u,u) + k(v,v) \quad  \text{for all $\abv =(u,v), \, \abv' = (u',v') \in \absp$},
\intertext{where}
&
\label{ip2}
 \varphi(u,u')  : = \int_{\Omega{\setminus}S} a e(\vo u) \cdot e(\vo{u}') \dd x + \int_B \rmD W_{\bar\lambda,\bar\mu}(\partial_3 \vb {u} \symprod e_3) \cdot (\partial_3 \vb {u}' \symprod e_3)  \dd x,
\\
&
\label{ip3}
k(v,v') = \int_\Omega \rho^*   |\vo v|^2 \EEE \dd x + \bar \rho \int_B |\vb v|^2 \dd x \,.
\end{align}
 \end{subequations}
 The limiting   pseudopotential of dissipation is   defined  by 
 \begin{subequations}
 \label{dissipation-limit}
 \begin{align}
 &
  \label{dissipation-limit-a}
  \overline{\calD}: L^2(B;\R^3) \to [0,\infty] \quad \qquad
 \overline{\calD}(q) = \begin{cases}
 \bar b \mathcal{D}^{\infty, p} (q\symprod e_3) & \text{if } \bar b \in [0,\infty),
 \\
 I_{\{0\}} (q \symprod e_3) & \text{if } \bar b =\infty,
 \end{cases}
 \intertext{where $ I_{\{0\}} $ is the indicator function of $\{  0 \}$ and}
 &
  \label{dissipation-limit-b}
 \calD^{\infty,p} (e') = \limsup_{t\to\infty}   \frac{\calD(te') } {t^p}  \quad \text{for all } e' \in \Sym
 \end{align}
  \end{subequations}
   and $p\in [1,2]$ is given as in \eqref{dissipation}.  It is not difficult to check that
  \begin{equation}
  \label{null-indicator}
   I_{\{0\}} (q \symprod e_3) <\infty \ \Leftrightarrow \ q=0\,.
  \end{equation} \EEE
  In what follows, we shall assume
   \begin{hypothesis}
\label{hyp:dissipation}
We suppose that
  \begin{equation}
    \label{dissipation-limit-c}
 \exists\, \delta> 0 \  \exists\, \theta \in (0,p) \ \forall\, e' \in \Sym \, : \quad |\calD(e') {-} \calD^{\infty,p}(e')| \leq \delta (1{+} |e'|^\theta).
 \end{equation}
 \end{hypothesis}
 \par
  Hence, we can introduce  the evolution operator $\abop : \absp  \rightrightarrows  \absp$  with domain (cf.\ \eqref{null-indicator}) \EEE
 \begin{subequations}
\label{opA-lim}
\begin{equation}
\label{domain-op-lim}
\begin{aligned}
\rmD(\abop) \colon =   \Big\{   \abv = (u,v) \in \absp\, : \
&   (1) \, v \in \Spu \text{ and } \partial_3 \vb u =0 \text{ if } \bar b = \infty,
\\
&
   (2)  \, \exists\, (w,\xi) \in \Spv  \times  L^2(B;\R^3)  \text{ s.t. }  \xi \in   \partial \overline \calD (\partial_3 \vb v \symprod e_3)   \text{ a.e.\ in } B,  \EEE
   \\
   & \qquad   k(w,v')   +  \varphi (u,v')   +  \int_{B} \xi \cdot (\partial_3 \vb{v}' {\symprod} e_3) \dd x =0 \text{ for all } v' \in   \Spu \text{ as in (1) } \Big\}\,,
\end{aligned}
\end{equation}
defined at every $\abv = (u,v)$ by
\begin{equation}
\label{ope-lim}
\abop \abv : = \left(\begin{array}{cc}
-v
\\
0
\end{array} \right) +  \left\{
 \left(\begin{array}{cc}
0
\\
-w
\end{array} \right)  \, : \  \text{$w$ as in  \eqref{domain-op-lim}(2)} \right\}. \EEE
\end{equation}
\end{subequations}
With the same arguments  as for the operator $\abop$
from \eqref{opA},
 it can be easily proved that $\abop$ is maximal monotone in $\absp$. Moreover, 
   with the very same arguments as for  Proposition \ref{prop:fromLLOO},
   one can check that the resolvent of $\abop$
 satisfies
\[
 \bar \abv + \abop \bar \abv  \ni  (\psi_1,\psi_2) \qquad \text{for every } (\psi_1,\psi_2)\in  \absp,
\]
 with $\bar \abv=(\bar u,\bar v)$,
 if and only if
 \begin{subequations}
 \begin{equation}
  \label{def-resolv-lim}
\begin{cases}
 \mathcal{J}(\bar v) \leq \mathcal{J}(v) \qquad \text{for all } v \in  \Spu,
 \\
 \bar u = \bar v +\psi_1\,
 \end{cases}
 \end{equation}
 with
 \begin{equation}
\label{energy-J-eps-new}
\begin{aligned}
\mathcal{J}:  \Spu \to \R, \quad  \mathcal{J}(v): = \frac12
 k(v,v) -  k(\psi_2,v) +\frac12 \varphi (v,v)   + \varphi(\psi_1,v) + \int_B \overline{\calD} (\partial_3 \vb{v} {\symprod} e_3) \dd x \,.
 \end{aligned}
\end{equation}
 \end{subequations}
\par
 By arguing as in Section 3, the expected limit of the sequence  $(u_n : = \uen + \urn)_n$  (cf.\ \eqref{decomp-u})
of the solutions to Problems $(P_n)$
 will be the sum of some $\ue$, solution to a  limiting `stationary' problem and some $\ur$, solution to a limiting evolutionary problem. More precisely,
  we will have that \EEE
$\ue =(\voe{u}, \vbe{u}) \in W^{2,\infty} (0,T;\Spu)$, with $\vbe{u}$ affine in $x_3$    (cf.\  Remark \ref{rmk:affine}
ahead)
is the unique solution to
\begin{equation}
\label{stati-ue-lim}
\varphi(\ue(t),v) = L(t)(v) \quad \text{for all } v \in \Spu \quad \text{for all } t \in [0,T],
\end{equation}
with $\varphi$ from \eqref{ip2} and $L$ from \eqref{p-ue-3}.  Instead, $\ur$ is the first component of the  solution
$\abv^{\mathrm{r}} = (\ur,\vr)$
 to the following abstract problem
\begin{equation}
\label{lim-abseq}
\begin{cases}
\frac{\dd }{\dd t}\abv(t) + \abop \abv(t)   \ni \EEE \abfo(t) \qquad \text{ in } \absp \ \  \foraa\, t \in (0,T),
\\
\abv(0) = \abv_{0},
\end{cases}
\end{equation}
with $\abv_0$ specified later on and $\abfo$ given by
\begin{equation}
\label{417}
\abfo = \left( {-} \partial_t \ue, \left( \frac{f}{\rho^*},0 \right) \right)\,.
\end{equation}
We postpone to Section \ref{s:5} some comments on the variational formulation of the  initial-boundary value problem  \eqref{lim-abseq}, cf.\ \eqref{5.1} and \eqref{5.1bis} ahead.
\par
In the same way as for the Cauchy problem \eqref{abstract-problem} with the forcing term $\abfo$ from \eqref{data-Cauchy-abs},  the results in  \cite[Sec.\ III.2]{Brezis73})
yield \EEE
\begin{proposition}
If $\abv_0 \in \mathrm{D}(\abop) $ and if $(f,g)$ satisfy \eqref{ass-f&g}, then \eqref{lim-abseq} has a unique solution
$\abv^{\mathrm{r}} = (\ur,\vr) \in W^{1,\infty} (0,T;\absp)$. 
\end{proposition}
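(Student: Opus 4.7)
The plan is to mimic the strategy that yielded Theorem \ref{thm:LLOO} at the pre-limit level, namely to view the Cauchy problem \eqref{lim-abseq} as an abstract evolution equation governed by a maximal monotone operator in a Hilbert space, and invoke the classical well-posedness result \cite[Prop.~3.4]{Brezis73}. Three of the four required ingredients are essentially already in place: $(\absp,(\cdot,\cdot))$ from \eqref{limiting-space}--\eqref{limit-inner-prod} is a Hilbert space, since $\varphi$ is a scalar product on $\Spu$ and $k$ is equivalent to the standard product $L^2$-norm on $\Spv$ thanks to \eqref{rho-star} and $\bar\rho>0$; the operator $\abop$ of \eqref{opA-lim} is maximal monotone on $\absp$, as observed just after \eqref{ope-lim} and proved along the same lines as Proposition \ref{prop:fromLLOO} via the resolvent identification \eqref{def-resolv-lim}; and $\abv_0\in\mathrm{D}(\abop)$ is part of the hypothesis.

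What remains is to verify that the forcing term $\abfo=(-\partial_t\ue,(f/\rho^*,0))$ from \eqref{417} has enough temporal regularity to yield a Lipschitz solution, i.e.\ $\abfo\in\BV([0,T];\absp)$. For the second component, $(f/\rho^*,0)\in\Spv$ at almost every time, and the $\BV$-in-time regularity of $f$ from \eqref{ass-f&g} transfers to $f/\rho^*$ since $\rho^*$ is time-independent and uniformly bounded away from zero by \eqref{rho-star}. For the first component one must first make sense of $\ue$ as the unique solution of the limiting static problem \eqref{stati-ue-lim}. Under the restriction $(\bar\lambda,\bar\mu)\in[0,\infty)\times(0,\infty)$ recalled just before Sec.~\ref{ss:4.1}, $\varphi$ in \eqref{ip2} is coercive on $\Spu$: in the bulk component via Korn's inequality on $H^1_{\Gdir}(\Omega\setminus S;\R^3)$ (using $\mathcal{H}^2(\Gdir)>0$) and the ellipticity of $a$; in $\partial_3\vb u$ via $\bar\mu>0$ and the analogue of \eqref{ellittico} for $W_{\bar\lambda,\bar\mu}$; and control of $\vb u$ itself in $L^2(B;\R^3)$ follows by one-dimensional integration in $x_3$ combined with the trace identification $\gamma_{S^\pm}(\vb u)=\gamma_S(\vo u^\pm)$ built into \eqref{limiting-space-2}. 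Lax--Milgram then yields a unique $\ue(t)\in\Spu$ depending linearly and continuously on $g(t)$, so the regularity $g\in W^{2,\infty}(0,T;L^2(\Gneu;\R^3))$ propagates to $\ue\in W^{2,\infty}(0,T;\Spu)$ and in particular $\partial_t\ue\in W^{1,\infty}(0,T;\Spu)\hookrightarrow\BV([0,T];\Spu)$.

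Combining these four ingredients, \cite[Prop.~3.4]{Brezis73} delivers a unique $\abv^{\mathrm{r}}=(\ur,\vr)\in W^{1,\infty}(0,T;\absp)$ satisfying \eqref{lim-abseq}. The one point requiring genuine work beyond the citation, and the main technical obstacle, is the coercivity of $\varphi$ on the whole of $\Spu$: the bilinear form couples the bulk displacement $\vo u$ to the layer displacement $\vb u$ only through the trace compatibility in \eqref{limiting-space-2}, so bounding $\vb u$ in $L^2(B;\R^3)$ in terms of $\varphi(u,u)$ demands that this coupling be genuinely exploited, a feature that did not appear in the pre-limit analysis of \cite{LLOO}.
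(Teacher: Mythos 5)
Your proposal is correct and takes the same route as the paper, which simply invokes \cite[Sec.~III.2]{Brezis73} together with the maximal monotonicity of $\abop$ (itself obtained by adapting the argument of Proposition~\ref{prop:fromLLOO}) and the $\BV([0,T];\absp)$-regularity of the forcing $\abfo$ inherited from \eqref{ass-f&g}. Your added verification of the coercivity of $\varphi$ on $\Spu$ — needed both to make $(\absp,(\cdot,\cdot))$ a genuine Hilbert space and to define $\ue$ via Lax--Milgram, exploiting the trace identification $\gamma_{S^\pm}(\vb u)=\gamma_S(\vo u^\pm)$ built into $\Spu$ — is a welcome explicit detail that the paper leaves to the reader.
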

We set
\begin{equation}
\label{added-label-Gio}
\abv^{\mathrm{e}} =(\ue, 0), \qquad \abv = \abv^{\mathrm{r}} + \abv^{\mathrm{e}}
\end{equation}
\par
We are now in a position to outline 
our argument for proving the convergence of the sequence $(u_n = \uen + \urn)_n$ to $u = \ue +\ur$:
\begin{enumerate}
\item the convergence of $\uen $ to $\ue$ will be obtained in Proposition \ref{prop:4.5} ahead, as part of the proof of
\item the convergence of $\urn $ to $\ur$, stated in Theorem \ref{thm:main} ahead.
 For proving it,  we will resort to a nonlinear version of Trotter's theory of approximation of semigroups acting on  \emph{variable} spaces, as developed in the Appendix of \cite{ILM09}. The need for such a theory is motivated by the fact that the functions $\urn$ and $\ur$ do not  belong to \EEE the same space.
\end{enumerate}
The proof of  Theorem \ref{thm:main} will be carried out throughout Section \ref{ss:4.2}.
\RRE
\par We now conclude this section by specifying
 the structure of the solution
$\ue =(\voe{u}, \vbe{u})$ to the limit stationary problem
\eqref{stati-ue-lim}. In particular, we will show that $\vbe{u}$ is affine in $x_3$. 
 \begin{remark}
\upshape
\label{rmk:affine}
Let
$\ue =(\voe{u}, \vbe{u}) \in W^{2,\infty} (0,T;\Spu)$ be the unique solution to
\eqref{stati-ue-lim}
with $\varphi$ from \eqref{ip2} and $L$ from \eqref{p-ue-3}. Then $\ue$ satisfies
\begin{equation}
\label{ue limite}
\begin{aligned}
&
\int_{\Omega{\setminus}S} a e(\voe u)(t) \cdot e(\vo{v}) \dd x + \int_B \rmD W_{\bar\lambda,\bar\mu}(\partial_3 \vbe {u} (t) \symprod e_3) \cdot (\partial_3 \vb {v} \symprod e_3)  \dd x
\\
&
=\int_{\Gneu} g(t) \cdot \vo{v} \dd \mathcal{H}^2(x)
\qquad \text{for all } v =(\vo{v}, \vb{v}) \in \Spu
\text{ and for all } t \in [0,T].
\end{aligned}
\end{equation}
 Choosing now $\vo v =0$ and $\vb v $ as an arbitrary test function $\varphi \in \mathrm{C}^\infty_{\mathrm{c}}(B;\R^3)$ leads to
 \[
 \int_{B} \left(\bar\mu\, \partial_3 ( \vbe{u})_1 \,\partial_3 \varphi_1 {+} \bar\mu \,\partial_3 ( \vbe{u})_2\, \partial_3 \varphi_2{+}(\bar\lambda {+}2\bar\mu )\,\partial_3 ( \vbe{u})_3 \, \partial_3 \varphi_3\right) \dd x =0 \qquad \text{for all } \varphi \in  \mathrm{C}^\infty_{\mathrm{c}}(B;\R^3),
 \]
  where $\partial_3 (\vbe{u})_i$ denotes the $i$th-component of the vector $\partial_3 \vbe{u}$. This
 implies that the function  $x_3 \mapsto \vbe{u}(\hat{x}, x_3,t)$ is affine.
  \EEE
\end{remark}

\subsection{Convergence}
\label{ss:4.2}
Throughout this section,  we will implicitly assume the validity of Hypotheses \ref{hyp:params} and \ref{hyp:dissipation}, and of conditions \eqref{ass-f&g} on the problem data $f$ and $g$. In particular, we shall omit to invoke these assumptions in all  of the statements of the various results, with the exception of Theorem \ref{thm:main}.
\par
In the next subsection
 we shortly recapitulate the basics of the result from Trotter theory that we shall use to prove
 Theorem \ref{thm:main}.
\subsubsection{Recaps on Trotter's theory of approximation of semigroups}
\label{sss:4.1.1}
Let us first fix some preliminary definitions.
 We consider
 \[
 \text{a sequence }  (\abspn{n})_n  \text{ of Hilbert spaces, with inner products } (\cdot,\cdot)_n \text{ and norms } |\cdot|_n,
 \]
 and a `limiting' Hilbert space $\absp$, such that
 for every $n \in \N$ there is defined an operator
 $\projn n :  \absp\to \abspn{n}$, linear and continuous,
 fulfilling the following properties:
 \begin{subequations}
 \label{props-Pn}
 \begin{align}
 &
 \label{prop-1-to-ver}
 \text{There exists $C>0$ such that for every $n\in \N$ and  $\abv\in \absp$ there holds $|\projn{n} \abv|_n \leq C |\abv|$;}
 \\
 &
  \label{prop-2-to-ver}
 \text{
 For every $\abv\in \absp$ there holds $\lim_{n\to\infty}|\projn{n} \abv|_n = |\abv|$.}
 \end{align}
 \end{subequations}
 Next,  for  a given sequence $(\abv_n)_n$ with $\abv_n \in \absp_n$ for every $n\in \N$, we will say that
\begin{equation}
\label{4.22}
\text{$(\abv_n)_n$  converge to $\abv \in \absp$  in the sense of Trotter if } \quad \lim_{n\to\infty} |\projn n \abv {-} \abv_n|_n=0.
\end{equation}
 We are now in a  position to recall the result from  \cite{ILM09} needed for our analysis.
 \begin{theorem}[\cite{ILM09}, Thm.\ 5]
 \label{thm:Trotter}
 Suppose that  the Hilbert spaces $\abspn {n},\, \absp $ fulfill \eqref{props-Pn}.
 Let $ \abopn{n}  :  \abspn{n} \rightrightarrows  \abspn{n} $, $ \abop : \absp \rightrightarrows \absp $ be maximal monotone operators,
 let $\abfon{n} \in L^1(0,T; \abspn n)$, $\abfo \in L^1(0,T;\absp)$, and let $\abv^0_n \in \overline{\rmD(\abopn n )}$, $\abv^0 \in \overline{\rmD(\abop)}$.
 Let $(\abv_n)_n$, $\abv$ be the weak solutions to  the Cauchy problems
  \begin{equation}
\label{abstract-problem-n}
\begin{aligned}
&
\begin{cases}
\frac{\dd }{\dd t}\abv_n(t) + \abopn{n} \abv_n(t)   \ni \EEE \abfon{n}(t) \qquad \text{ in }  \absp_n \EEE \  \  \foraa t \in (0,T),
\\
\abv_n(0) = \abv^0_n,
\end{cases}
\\
&   \begin{cases}
\frac{\dd }{\dd t}\abv(t) + \abop \abv(t)    \ni \EEE \abfo(t) \qquad \text{ in } \absp \ \  \foraa t \in (0,T),
\\
\abv(0) = \abv^0\,.
\end{cases}
\end{aligned}
\end{equation}
 If
 \begin{equation}
 \label{data-converg}
  \lim_{n\to\infty} |\projn n (\abv^0) -\abv^0_n |_n =0 \quad \text{and} \quad  \lim_{n\to\infty} \int_0^T |\projn n (\abfo(t)) - \abfon n(t) |_n \dd t  =0
 \end{equation}
 and if  for every $\lambda \geq 0$ and $\abv \in \absp$ we have that
 \begin{equation}
 \label{trotter-hyp}
 \text{the sequence } ((\mathsf{I}+\lambda \abopn n )^{-1}( \projn n  (\abv)))_n \text{ converge in the sense of Trotter to } (\mathsf{I}+\lambda \abop )^{-1} (\abv) \text{ as } n\to\infty, \end{equation}
 (where we denote by the same symbol the identity operators $\mathsf{I}: \absp_n \to \absp_n$ and
 $\mathsf{I}: \absp \to \absp$),
 then  $(\abv_n)_n$ converge to $\abv$ in the sense of Trotter uniformly on $[0,T]$, namely \EEE
 \begin{equation}
 \label{trotter-thesis}
 \lim_{n\to\infty}\sup_{t\in [0,T]}   |\projn n (\mathsf{X(t)}) - \mathsf{X}_n (t)|_n  = 0.
 \end{equation}
 \end{theorem}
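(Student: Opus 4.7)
The plan is to reduce the statement to a Crandall--Liggett-type exponential formula for semigroups generated by maximal monotone operators and then exploit the Trotter-type hypothesis \eqref{trotter-hyp} iteratively. The fundamental tool is the $L^1$-contraction property of weak solutions to \eqref{abstract-problem-n}: if $\abv_1, \abv_2$ solve the Cauchy problem with the same maximal monotone operator but data $(\abv_1^0, \abfo_1),(\abv_2^0, \abfo_2)$, then
\[
|\abv_1(t) - \abv_2(t)| \leq |\abv_1^0 - \abv_2^0| + \int_0^t |\abfo_1(s) - \abfo_2(s)| \, \dd s \qquad \forall\, t \in [0,T],
\]
and analogously at level $n$. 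This allows one to approximate $\abfo$ (respectively $\abfon n$) in $L^1(0,T;\absp)$ (respectively in $L^1(0,T;\abspn n)$) by piecewise constant functions while controlling the error on solutions uniformly in time, the coupling between the two approximations being ensured by \eqref{data-converg}.

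On any subinterval where $\abfo \equiv \bar f_i$ and $\abfon n \equiv \bar f_{i,n}$, the solutions are given by nonlinear semigroups generated by $\abop - \bar f_i$ and $\abopn n - \bar f_{i,n}$, respectively, which are still maximal monotone. Since
\[
(\mathsf{I} + \lambda(\abopn n - \bar f_{i,n}))^{-1} \abv = (\mathsf{I} + \lambda \abopn n)^{-1}(\abv + \lambda \bar f_{i,n}) \qquad \text{for all } \abv \in \abspn n,
\]
and analogously for $\abop$, the hypothesis \eqref{trotter-hyp} together with \eqref{prop-1-to-ver}--\eqref{prop-2-to-ver} and the linearity of $\projn n$ show that Trotter-convergence of resolvents transfers to the shifted operators. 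It thus suffices to treat the homogeneous case $\abfo \equiv 0$, $\abfon n \equiv 0$.

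For the homogeneous case, the Crandall--Liggett exponential formula
\[
\abv(t) = \lim_{k \to \infty} \left(\mathsf{I} + \tfrac{t}{k}\abop\right)^{-k} \abv^0, \qquad \abv_n(t) = \lim_{k \to \infty} \left(\mathsf{I} + \tfrac{t}{k}\abopn n\right)^{-k} \abv^0_n,
\]
holds uniformly in $t$ on bounded intervals, with an explicit rate of order $1/\sqrt{k}$ depending only on $|\abop \abv^0|$ and $|\abopn n \abv^0_n|_n$ (kept uniformly bounded after a standard density reduction to $\abv^0 \in \rmD(\abop)$, $\abv^0_n \in \rmD(\abopn n)$). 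Setting $J_\lambda := (\mathsf{I} + \lambda \abop)^{-1}$ and $J_{\lambda,n} := (\mathsf{I} + \lambda \abopn n)^{-1}$, the nonexpansiveness of these resolvents yields, via the triangle inequality,
\[
|J_{\lambda,n}^k \projn n \abv^0 - \projn n J_\lambda^k \abv^0|_n \leq |J_{\lambda,n}^{k-1} \projn n \abv^0 - \projn n J_\lambda^{k-1} \abv^0|_n + |J_{\lambda,n} \projn n (J_\lambda^{k-1} \abv^0) - \projn n J_\lambda (J_\lambda^{k-1} \abv^0)|_n,
\]
and an induction on $k$, where the second term on the right-hand side is controlled at each step by \eqref{trotter-hyp}, shows that $J_{\lambda,n}^k \projn n \abv^0 \to J_\lambda^k \abv^0$ in the Trotter sense for every fixed $k$.

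The main obstacle is to upgrade this single-$k$ Trotter-convergence of iterated resolvents to convergence which is uniform in $t \in [0,T]$: this is achieved by a diagonal argument, selecting $k=k(n) \to \infty$ sufficiently slowly so that the Crandall--Liggett discretization error (of order $1/\sqrt{k(n)}$) and the Trotter-error accumulated over $k(n)$ iterations both vanish. This delivers \eqref{trotter-thesis} in the homogeneous case; reinstating the piecewise constant forcing via the contraction estimate, and finally letting the mesh size tend to zero through the same contraction estimate, yields \eqref{trotter-thesis} for general $\abfo \in L^1(0,T;\absp)$.
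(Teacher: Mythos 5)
First, a point of comparison: the paper does not prove Theorem \ref{thm:Trotter} at all — it is imported verbatim from \cite{ILM09} (Thm.\ 5) and then applied — so there is no internal proof to measure you against. Your reconstruction follows the classical Brezis--Pazy/Crandall--Liggett route (contraction estimate to reduce to piecewise constant forcing and to data in the domain, exponential formula, iteration of the resolvent convergence), which is indeed the natural strategy behind results of this type, and several of your steps are sound: the contraction estimate for weak solutions, the transfer of \eqref{trotter-hyp} to the shifted operators via the resolvent identity together with \eqref{prop-1-to-ver} and the linearity of $\projn n$, and the induction on $k$ using nonexpansiveness of $(\mathsf{I}+\lambda\abopn n)^{-1}$ are all correct.

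There are, however, two genuine gaps. (i) The parenthetical claim that $|\abopn n^{\circ}\abv_n^0|_n$ is ``kept uniformly bounded after a standard density reduction'' does not follow from density: the domains $\rmD(\abopn n)$ live in different spaces, and choosing $\abv_n^0\in\rmD(\abopn n)$ close to $\projn n\abv^0$ gives no control on the minimal sections, while the Crandall--Liggett rate you invoke is $O\bigl(t\,|\abopn n^{\circ}\abv_n^0|_n/\sqrt{k}\bigr)$ and must be uniform in $n$. The standard fix is to regularize through the resolvents themselves, e.g.\ $\tilde{\abv}_n^0:=(\mathsf{I}+\lambda\abopn n)^{-1}\projn n\tilde{\abv}^0$, and to bound $|\abopn n^{\circ}\tilde{\abv}_n^0|_n\le \lambda^{-1}|\projn n\tilde{\abv}^0-\tilde{\abv}_n^0|_n$, which is uniformly bounded precisely because of \eqref{trotter-hyp} and \eqref{props-Pn}; this step has to be spelled out, since it is where the hypothesis enters the reduction. (ii) The uniformity in $t$ is not delivered by the diagonal argument as you state it: choosing $k(n)\to\infty$ slowly only works if, for each \emph{fixed} $k$, the accumulated resolvent error $\sum_{j<k}|(\mathsf{I}+\tfrac tk\abopn n)^{-1}\projn n(J_{t/k}^{\,j}\abv^0)-\projn n J_{t/k}^{\,j+1}\abv^0|_n$ is small \emph{uniformly} in $t\in[0,T]$, whereas \eqref{trotter-hyp} is only pointwise in $\lambda$ (and the elements $J_{t/k}^{\,j}\abv^0$ vary with $t$ as well). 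You must either prove an equicontinuity-in-$\lambda$ statement for these errors, or — simpler — first obtain convergence for each fixed $t$ (fix $t$, choose $k$, then let $n\to\infty$) and then upgrade it: after your reductions the maps $t\mapsto\abv_n(t)$ and $t\mapsto\projn n\abv(t)$ are Lipschitz with constants bounded uniformly in $n$ (using \eqref{prop-1-to-ver}), so the scalar functions $t\mapsto|\projn n\abv(t)-\abv_n(t)|_n$ are equi-Lipschitz and their pointwise convergence to $0$ becomes uniform on $[0,T]$, which is \eqref{trotter-thesis}. With these two repairs your argument closes; as written, both steps are asserted rather than proved.
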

\subsubsection{Setting up  Trotter's theory for our problem}
\label{sss:4.1.2}
In what follows, we establish the setup in which we shall  apply Thm.\ \ref{thm:Trotter}.
We consider the  Hilbert \EEE spaces $\absp_n$ from \eqref{Hilbert-spaces-n},
while the `limiting' Hilbert space $\absp$ is given by \eqref{limiting-space},  with  the inner product  from \eqref{limit-inner-prod}.
Now,  in order to apply Thm.\ \ref{thm:Trotter} \EEE
 we  have to introduce a linear continuous operator $\projn n : \absp \to \absp_n$  that
 with any element $\abv \in \absp$ associates  a suitable representative $\projn n(\abv) \in \absp_n$. \EEE
  Therefore, the operator
$\projn n : \absp \to \absp_n$ shall have the form
\begin{equation}
\label{opProjn}
\projn n  (\abv) = \projn n ((\vo{u},\vb{u}); (\vo{v}, \vb{v}) )= (\projn{n}^u(\vo u, \vb u); \projn{n}^v (\vo v, \vb v))\,.
\end{equation}
The choice for the operator  $\projn n^v: \mathsf{V} \to  L^2 (\Omega;\R^3)  $  (with $\mathsf{V}$ from \eqref{limiting-space-3}) \EEE
is guided by the idea  of describing the limiting state in terms of two velocity fields, namely we set
\begin{equation}
\label{projn-v}
 \projn{n}^v (\vo v, \vb v): = \chi_{\Omega_{\eps_n}} \vo v + (1{-}\chi_{\Omega_{\eps_n}}) (\sop{\eps_n})^{-1} [\vb v]\,.
\end{equation}
The choice for $\projn{n}^u$, specified in  \eqref{projn-u} below,  reflects how a field like $\vb u$ may  be involved \EEE in the asymptotic behavior of $(u_n)_n$. Indeed,
 first of all we consider  the unique function  $\vb{u}^n$  satisfying
\begin{equation}
\label{NEW-elliptic-phin}
\begin{gathered}
\text{$\vb{u}^n \in H^1(B_{\eps_n};\R^3)$ with $ \vb{u}^n (\hat{x}, \pm \eps_n) =   \vo{u}^{\pm}(\hat x, 0) $   for a.a.\ $\hat x \in S$
 and  }
\\
 \int_{B_{\eps_n}} \rmD W_{\lambda_n,\mu_n} (e(\vb{u}^n)) \cdot e(\varphi) \dd x   = \int_{B}  \rmD W_{\bar\lambda,\bar\mu} (\partial_3 \vb{u}  {\symprod} e_3) \cdot e(\eps_n,\sop{\eps_n}[\varphi]) \dd x
 \quad \text{for all } \varphi \in H_{S_{\eps_n}^+ {\cup} S_{\eps_n}^-}^1(B_{\eps_n};\R^3)\,,
\end{gathered}
\end{equation}
where $H_{S_{\eps_n}^+ {\cup} S_{\eps_n}^-}^1(B_{\eps_n};\R^3)$ denotes the closed subspace of $H^1(B_{\eps_n};\R^3)$ consisting of the functions with null trace on   $S_{\eps_n}^+ {\cup} S_{\eps_n}^-$.  From the functions $\vb{u}^n$ we then derive functions defined on $B = S \times (-1,1)$ by resorting to the operator
$\sop{\eps_n}$. Namely, we set
\begin{equation}
\label{UBN}
\ubn: = \sop{\eps_n} [\vb{u}^n].
\end{equation}
From \eqref{NEW-elliptic-phin} we deduce that the functions $\ubn$ fulfill  (recall that $ \gamma_{S^\pm}$ denote the traces on $S^\pm = S \times \{\pm 1\}$, cf.\ \eqref{s-pm}) \EEE
\begin{equation}
\label{NEW-ubn}
\begin{gathered}
\text{$\ubn \in H^1(B;\R^3)$ with $ \gamma_{S^\pm}(\ubn)= \gamma_{S^\pm}(\vb{u}) $
 and  }
\\
\frac1{\eps_n}   \int_{B}  \rmD W_{\lambda_n,\mu_n} (e(\eps_n,\ubn)) \cdot e(\eps_n,\psi) \dd x   = \int_{B}  \rmD W_{\bar\lambda,\bar\mu} (\partial_3 \vb{u}  {\symprod} e_3) \cdot e(\eps_n,\psi) \dd x
 \quad \text{for all } \psi \in H_{S^+ {\cup} S^-}^1(B;\R^3)\,.
\end{gathered}
\end{equation}
 The functions $\ubn$ will enter into the definition of $\projn{n}^u$. Before specifying in which way, however, let us gain further insight into the properties of the sequence $(\ubn)_n$ in the  following result, where we are using the notation $\wubn$ for the first two components of the function $\ubn$, cf.\ Notation \ref{notation-initial}.
 \EEE
\begin{lemma}
\label{lemma4.1}
The following properties hold:
\begin{enumerate}
\item the sequence   $(\wubn)_n$ \EEE
 converge weakly   to $\widehat{u}_B$   in $L^2(B;\R^2) $;
 \item the sequence  $((\ubn)_3, e(\eps_n, \ubn))_n$ \EEE
 converge strongly to $(u_{B3}, \partial_3 u_B {\symprod} e_3)$ in $H_{\partial_3} (B) \times L^2(B;\Sym)$  (with the space
 $H_{\partial_3} (B)$ defined analogously as $H_{\partial_3} (B;\R^3)$, cf.\ \eqref{space-H-partial3}); \EEE
 \item  if $ u_B$ belongs to $H^1(B;\R^3),$ then  $(\ubn)_n$  \EEE converge strongly to $u_B$ in
  $L^2(B;\R^3).$ \EEE
\end{enumerate}
\end{lemma}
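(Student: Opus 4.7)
My plan is to prove the three items via a compactness--identification--strong-convergence scheme for the rescaled elliptic problem \eqref{NEW-ubn}, exploiting that under Hypothesis~\ref{hyp:params}(2) in the regime $\bar\mu\in(0,\infty)$ the quadratic form $\tfrac1{\eps_n}\int_B W_{\lambda_n,\mu_n}(e(\eps_n,\cdot))\dd x$ is uniformly coercive in $e(\eps_n,\cdot)$, while its limiting form $\int_B W_{\bar\lambda,\bar\mu}(\partial_3(\cdot)\symprod e_3)\dd x$ is strictly positive definite in $\partial_3(\cdot)$ since
\[
\rmD W_{\bar\lambda,\bar\mu}(q\symprod e_3)\cdot(q\symprod e_3)=(\bar\lambda+2\bar\mu)\,q_3^{\,2}+\bar\mu\,(q_1^{\,2}+q_2^{\,2}).
\]

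\textbf{A priori bounds.} First I would test \eqref{NEW-ubn} with $\psi_n:=\ubn-T_n$, where $T_n\in H^1(B;\R^3)$ is a lifting of the Dirichlet data, i.e.\ $\gamma_{S^\pm}(T_n)=\gamma_{S^\pm}(\vb u)$, chosen so that $\|e(\eps_n,T_n)\|_{L^2(B;\Sym)}$ stays bounded uniformly in $n$. Since $\gamma_S(\vo u^\pm)$ is only in $H^{1/2}(S;\R^3)$, the construction of $T_n$ is delicate: a natural candidate uses an $\eps_n$-rescaling of the bulk values $\vo u^\pm$ in a neighborhood of the interface (available because $\vo u^\pm\in H^1(\Omega^\pm;\R^3)$), possibly combined with a density argument. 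Coercivity in $e(\eps_n,\ubn)$ together with a Cauchy--Schwarz bound on the right-hand side of \eqref{NEW-ubn} then yields $\|e(\eps_n,\ubn)\|_{L^2(B;\Sym)}\leq C$; a Poincar\'e inequality in $x_3$ (enabled by the fixed Dirichlet trace on $S^\pm$) upgrades this to uniform boundedness of $\ubn$ in $L^2(B;\R^3)$ and of $(\ubn)_3$ in $H_{\partial_3}(B)$.

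\textbf{Passage to the limit and identification.} Up to a subsequence, $\ubn\rightharpoonup \bar u$ in $L^2(B;\R^3)$ and $e(\eps_n,\ubn)\rightharpoonup E$ in $L^2(B;\Sym)$. Since $e(\eps_n,\ubn)_{ij}$ contains a prefactor $\eps_n$ for $i,j\in\{1,2\}$, a distributional argument forces $E_{ij}=0$ in that range, so $E=\partial_3\bar u\symprod e_3$. Passing to the limit in \eqref{NEW-ubn} with a fixed test function $\psi\in H^1_{S^+\cup S^-}(B;\R^3)$ (so that $e(\eps_n,\psi)\to\partial_3\psi\symprod e_3$ strongly in $L^2$) and using $\lambda_n/\eps_n\to\bar\lambda$, $\mu_n/\eps_n\to\bar\mu$, I arrive at
\[
\int_B\rmD W_{\bar\lambda,\bar\mu}\bigl((\partial_3\bar u-\partial_3\vb u)\symprod e_3\bigr)\cdot(\partial_3\psi\symprod e_3)\dd x=0
\]
for every admissible $\psi$. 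Positive definiteness of the form and matching boundary traces $\gamma_{S^\pm}(\bar u)=\gamma_{S^\pm}(\vb u)$ then force $\bar u=\vb u$, giving the weak convergence $\wubn\rightharpoonup \widehat{\vb u}$ of item~(1); uniqueness of the limit removes the need to restrict to subsequences.

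\textbf{Strong convergence and item (3).} To upgrade to strong convergence in item~(2), I would establish the energy identity
\[
\lim_{n\to\infty}\tfrac1{\eps_n}\int_B\rmD W_{\lambda_n,\mu_n}(e(\eps_n,\ubn))\cdot e(\eps_n,\ubn)\dd x=\int_B\rmD W_{\bar\lambda,\bar\mu}(\partial_3\vb u\symprod e_3)\cdot(\partial_3\vb u\symprod e_3)\dd x
\]
by testing \eqref{NEW-ubn} with $\psi_n=\ubn-T_n$ and exploiting strong/weak duality. Combined with the weak convergence $e(\eps_n,\ubn)\rightharpoonup\partial_3\vb u\symprod e_3$ and weak lower semicontinuity applied separately to the $\bar\lambda$- and $\bar\mu$-pieces, this yields strong convergence $e(\eps_n,\ubn)\to\partial_3\vb u\symprod e_3$ in $L^2(B;\Sym)$. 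Since $\partial_3(\ubn)_3=e(\eps_n,\ubn)_{33}$, Poincar\'e in $x_3$ with the fixed Dirichlet trace then gives $(\ubn)_3\to(\vb u)_3$ in $H_{\partial_3}(B)$. For item~(3), the assumption $\vb u\in H^1(B;\R^3)$ makes $\psi_n:=\ubn-\vb u$ admissible in \eqref{NEW-ubn}; using that $e(\eps_n,\vb u)\to\partial_3\vb u\symprod e_3$ strongly in $L^2$ (the horizontal $\eps_n$-prefactored derivatives now vanish because $\vb u\in H^1$), the energy argument above gives $e(\eps_n,\psi_n)\to 0$ in $L^2$, whence Poincar\'e in $x_3$ yields $\ubn\to\vb u$ strongly in $L^2(B;\R^3)$. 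I expect the main obstacle to be the careful construction of the lifting $T_n$ in the first step, given the merely $H^{1/2}$-regularity of the Dirichlet data $\gamma_S(\vo u^\pm)$.
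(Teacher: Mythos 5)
Your strategy is essentially the same as the paper's (test the rescaled elliptic problem against $\ubn$ minus a lifting, use coercivity to bound $e(\eps_n,\ubn)$, pass to the limit in \eqref{NEW-ubn}, then upgrade to strong convergence via an energy identity). However, there are two points worth flagging.

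First, your worry about the construction of the lifting $T_n$ is misplaced: the paper simply fixes once and for all a single $\vb{u}^*\in H^1(B;\R^3)$ with $\gamma_{S^\pm}(\vb{u}^*)=\gamma_{S^\pm}(\vb{u})$, which exists by the usual $H^{1/2}\to H^1$ trace lifting since $\gamma_S(\vo u^\pm)\in H^{1/2}(S;\R^3)$. For a fixed $\vb{u}^*\in H^1(B;\R^3)$ the quantity $e(\eps_n,\vb{u}^*)$ is automatically bounded in $L^2(B;\Sym)$ (the $\eps_n$-prefactored horizontal components even vanish in the limit), so no $n$-dependent rescaling of the bulk values is needed.

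Second, and more substantively, the claim that ``a Poincar\'e inequality in $x_3$'' gives uniform boundedness of $\ubn$ in $L^2(B;\R^3)$ does not hold as stated, and this gap also affects item (3). The one-dimensional Poincar\'e inequality in $x_3$ bounds $\int_B|z|^2\dd x$ by $\int_B|\partial_3 z|^2\dd x$, but $e(\eps_n,z)$ does \emph{not} directly control $\partial_3 z_1$ and $\partial_3 z_2$: one has $e(\eps_n,z)_{i3}=\tfrac12(\eps_n\partial_i z_3+\partial_3 z_i)$ for $i=1,2$, so isolating $\partial_3 z_i$ introduces the uncontrolled term $\eps_n\partial_i z_3$, which cannot be bounded by $\|e(\eps_n,z)\|_{L^2(B;\Sym)}$ alone. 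Only the third component $\partial_3 z_3=e(\eps_n,z)_{33}$ is directly available. What is actually needed is the Korn-type estimate
\[
\int_B|z|^2\dd x\le C\int_B|e(\eps_n,z)|^2\dd x\qquad\text{for all }z\in H^1_{S^+\cup S^-}(B;\R^3),
\]
uniformly in $n$, which is nontrivial: in the paper it is obtained (as \eqref{added-general-f}) by applying the thin-domain Korn inequality \eqref{e:4.1} to $w=\sop{\eps_n}^{-1}[z]$ extended by $0$ to $\Omega_{\eps_n}$ and changing variables. Without an inequality of this type you neither get $L^2(B;\R^3)$-boundedness of the full vector $\ubn$ (needed for item~(1)) nor the strong $L^2(B;\R^3)$ convergence in item~(3).
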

\begin{proof}
 Let $\vb{u}^* \in H^1(B;\R^3)$ fulfill $\gamma_{S^\pm}(\vb{u}^*) =\gamma_{S^\pm}(\vb{u})  $. We plug in \eqref{NEW-ubn} the test function $\psi = \ubn-\vb{u}^*$, thus obtaining
\begin{equation}
\label{new-test}
\frac1{\eps_n} \int_{B} \rmD W_{\lambda_n,\mu_n} (e(\eps_n, \ubn)) \cdot e(\eps_n, \ubn-\vb{u}^*) \dd x = \int_B \rmD W_{\bar\lambda,\bar\mu}(\partial_3 \vb{u}{\symprod} e_3) \cdot e(\eps_n, \ubn-\vb{u}^*) \dd x.
\end{equation}
Combining the estimate
$\tfrac1{\eps_n} \rmD W_{\lambda_n\mu_n}(e) \cdot e\geq \tfrac{\mu_n}{\eps_n} |e|^2$  (cf.\eqref{ellittico}) with the fact that
 $\frac{\mu_n}{\eps_n} \to \bar \mu >0$ by Hypothesis \ref{hyp:params}, from \eqref{new-test}
 we easily deduce that
 \[
 \exists\, C>0 \ \forall\, n \in \N\, : \qquad
 \frac{\bar{\mu}}2 \int_{B} |e(\eps_n, \ubn)|^2 \dd  x \leq C(1{+} \|e(\eps_n, \ubn)\|_{L^2(B;\R_{\mathrm{sym}}^3)}).
 \]
This implies that the sequence $(e(\eps_n, \ubn))_n$ is bounded in $L^2(B;\R_{\mathrm{sym}}^3)$.
  We will use this to conclude that $(\ubn)_n$ is bounded in $L^2(B;\R^3)$ via a Poincar\'e-type estimate, namely
\begin{equation}
\label{added-general-f}
\exists\, C>0 \ \forall\, n \in \N  \ \forall\, z \in H_{S^+{\cup}S^-}^1(B;\R^3)\, :   \qquad \int_{B} |z|^2 \dd x \leq C \int_{B} |e(\eps_n, z)|^2 \dd x \,.
\end{equation}
We deduce  \eqref{added-general-f}
from
 estimate \eqref{e:4.1}, written for
 the function
 \[
 w: = \begin{cases}
 0 &\text{in } \Omega_{\eps_n},
 \\
 \sop{\eps_n}^{-1}[z] &\text{in } B_{\eps_n}
 \end{cases}
 \]
 (observe that $\sop{\eps_n}^{-1}[z] \in H^1(B_{\eps_n};\R^3)$ with
 $\gamma_{S_{\eps_n}^{\pm}}( \sop{\eps_n}^{-1}[z]) =0$).
Then,
\[
\begin{aligned}
 \int_{B} |z|^2 \dd x =  \int_{B} |\sop{\eps_n}[ \sop{\eps_n}^{-1}[z]]|^2 \dd x &  = \frac1{\eps_n} \int_{B_{\eps_n}} |\sop{\eps_n}^{-1}[z]|^2 \dd x
 \\ & \stackrel{(1)}{\leq}  \frac1{\eps_n} \left( 2\eps_n \int_{S_{\eps_n}} |\sop{\eps_n}^{-1}[z]|^2 \dd x
 + C\eps_n^2 \int_{B_{\eps_n}} |e(\sop{\eps_n}^{-1}[z])|^2 \dd x  +0 \right)
\\
&\stackrel{(2)}{=}   C\eps_n  \int_{B_{\eps_n}} |e(\sop{\eps_n}^{-1}[z])|^2 \dd x
\\
& \stackrel{(3)}{=}  C \int_{B} |e(\eps_n,  \sop{\eps_n}[ \sop{\eps_n}^{-1}[z]])|^2 \dd x
 = C\int_B |e(\eps_n,z)|^2 \dd x,
 \end{aligned}
\]
where (1) follows from \eqref{e:4.1}, (2)  from the fact that $\gamma_{S_{\eps_n}^{\pm}}( \sop{\eps_n}^{-1}[z]) =0$,  and (3) from \eqref{newlabel}.
Next, choosing $z = \ubn{-}\vb{u}^*$ in \eqref{added-general-f} we conclude that
\begin{equation}
\label{Poincare}
\int_{B} |\ubn{-}\vb{u}^*|^2 \dd x \leq C \int_{B} |e(\eps_n, \ubn{-}\vb{u}^*)|^2 \dd x.
\end{equation}
Since  $(e(\eps_n, \ubn))_n$ is bounded in $L^2(B;\R_{\mathrm{sym}}^3)$, we infer that
 $(\ubn)_n$ is bounded in $L^2(B;\R^3)$. Therefore,
 up to a (not relabelled) subsequence, the functions $(\ubn,e(\eps_n,\ubn))_n$ weakly converge to a pair
 $(\vb{\bar u}, \partial_3 \vb{\bar u}{\symprod}e_3)$, where the identification of the weak limit of $(e(\eps_n,\ubn))_n$
 follows from a distributional convergence argument. We are then in a position to pass to the limit in \eqref{NEW-ubn} and thus deduce that the
 function $\vb{\bar u}$ fulfills
 \begin{equation}
\label{lim-eq-ubn}
\begin{gathered}
\text{$\vb{\bar u} \in H_{\partial_3}^1(B;\R^3)$ with $ \gamma_{S^\pm}(\vb{\bar u})= \gamma_{S^\pm}(\vb{u}) $
 and  for all } \psi \in H_{\partial_3}^1(B;\R^3) \text{ with } \gamma_{S^\pm} (\psi) =0 \text{ there holds}
\\
\int_{B}  \rmD W_{\bar\lambda,\bar\mu} (\partial_3 \vb{\bar u}{\symprod}e_3) \cdot  (\partial_3 \psi{\symprod}e_3)   \dd x   = \int_{B}  \rmD W_{\bar\lambda,\bar\mu} (\partial_3 \vb{u}  {\symprod} e_3) \cdot (\partial_3 \psi {\symprod}e_3) \dd x\,.
\end{gathered}
\end{equation}
Therefore,
 $\partial_3 (\vb{\bar u} - \vb u) =0$ and, since  $ \gamma_{S^\pm}(\vb{\bar u})= \gamma_{S^\pm}(\vb{u}) $, we ultimately have
 that $\vb{\bar u} = \vb u$.   Having uniquely identified the limit we eventually gain convergence along the \emph{whole} sequence $(\eps_n)_n$. We thus conclude claim (1).
 \par
 Eventually, \eqref{new-test} implies
\[
\begin{aligned}
\lim_{n\to\infty} \int_B  2 W_{\bar\lambda,\bar\mu} (e(\eps_n, \ubn))  \dd x
 & =
\lim_{n\to\infty} \int_B \rmD W_{\bar\lambda,\bar\mu} (e(\eps_n, \ubn)) \cdot e(\eps_n, \ubn) \dd x  \\ &   = \lim_{n\to\infty}\frac1{\eps_n}
\int_B  \rmD W_{\lambda_n,\mu_n} (e(\eps_n, \ubn)) \cdot e(\eps_n, \ubn) \dd x\\ &  = \int_B \rmD W_{\bar\lambda,\bar\mu} (\partial_3 \vb{u} {\symprod} e_3 ) \cdot (\partial_3 \vb{u} {\symprod} e_3 ) \dd x
\\ &  = \int_B 2 W_{\bar\lambda,\bar\mu} (\partial_3 \vb{u}{\symprod} e_3 ) \dd x\,.
\end{aligned}
\]
Since the functional $q\mapsto \left( \int_B  W_{\bar\lambda,\bar\mu} (q)  \dd x \right)^{1/2}$ induces a norm  equivalent   to the usual one  on $L^2(B;\R^{3\times 3}_{\mathrm{sym}})$, we thus deduce that $e(\eps_n, \ubn)$ converge to $\partial_3 \vb{u} \symprod e_3$ strongly in $L^2(B;\R^{3\times 3}_{\mathrm{sym}})$
and then
 $(\ubn)_3\to (\vb{u})_3$ strongly  in $H_{\partial_3}^1(B)$.  This gives claim (2).

Finally, suppose that $\vb{u} \in H^1(B;\R^3)$. The analogue of
\eqref{Poincare}, i.e.
$
\int_{B} |\ubn{-}\vb{u}|^2 \dd x \leq C \int_{B} |e(\eps_n, \ubn{-}\vb{u})|^2 \dd x,
$
combined with claim (2),
yields that  $\ubn\to \vb{u}$ strongly in $L^2(B;\R^3)$.
 This concludes the proof.
\end{proof} \EEE

\par
 We define the operator $\projn{n}^u : \mathsf{U} \to H^1_{\Gdir}(\Omega;\R^3) $ (with $\mathsf{U}$ from
\eqref{limiting-space-2})
 by 
\begin{equation}
\label{projn-u}
\projn n^u (\vo u,\vb u)(x): = \begin{cases}
(\sop{\eps_n})^{-1} [\ubn](x) = \vb{u}^n(x) & \text{ if } x \in  B_{\eps_n},
\\
\xi(x_3) \vo{u}(\hat{x}, x_3 +\mathrm{sign}(x_3) \eps_n) +(1{-}\xi(x_3)) \vo u(x) & \text{ if } x \in  B_{\eps_0}^{\pm} \setminus B_{\eps_n}^{\pm},
\\
\vo u(x) & \text{ if } x \in  \Omega_{\eps_0},
\end{cases}
\end{equation}
where
$B_{\eps_0}^\pm$,  $B_{\eps_n}^\pm$ are  from \eqref{Beps0-pm} \EEE and
$\xi$ is a function in $\mathrm{C}^\infty_{\mathrm{c}}(\R)$ such that
\begin{equation}
\label{cut-off-xi}
\xi(r) :=
\begin{cases}
1  & \text{ if }   |r| \leq \frac{\eps_0}3, \EEE
\\
\in [0,1] & \text{ if } \frac{\eps_0}3 \leq |r| \leq  \frac{2\eps_0}3,
\\
0
& \text{ if } |r| \geq \frac{2\eps_0}3
\end{cases}
\end{equation}
 (namely,  for $|r|\in [ \tfrac{\eps_0}3,  \tfrac{2\eps_0}3 ] $ we set $\xi(r): = \xi_r $ with $\xi_r $ some element in $[0,1]$). 
Note that $\projn n^u (\vo u,\vb u)$ does belong to   $H_{\Gdir}^1 (\Omega;\R^3) $ because $u =( \vo u, \vb u)$ belongs to the space $ \Spu $ from \eqref{limiting-space-2}.
In what follows, we will often write $\projn n^u (u)$ in place of $ \projn n^u (\vo u,\vb u)$
for notational simplicity.
We are now in a position to prove the
following result.
 \EEE
\begin{proposition}
\label{prop:4.2}
We have that
\begin{enumerate}
\item there exists $C>0$ such that for all $\abv \in \absp$ there holds $|\projn n \abv|_n \leq C|\abv|$;
\item there holds $\lim_{n\to\infty} |\projn n \abv|_n = |\abv|$,
\end{enumerate}
namely properties \eqref{props-Pn} hold.
\end{proposition}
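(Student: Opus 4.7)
The plan is to split $|\projn n \abv|_n^2 = \varphi_n(\projn{n}^u u, \projn{n}^u u) + k_n(\projn{n}^v v, \projn{n}^v v)$ into kinetic and strain-energy contributions; the strain-energy integral will be further decomposed according to the three regions $\Omega_{\eps_0}$, $B_{\eps_0}^\pm \setminus B_{\eps_n}^\pm$, and $B_{\eps_n}$ on which $\projn{n}^u u$ has qualitatively different expressions. Claims (1) and (2) will then follow by handling each piece separately.

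For the kinetic part, the change of variables $x = (\hat x, \eps_n x_3)$ in the integral over $B_{\eps_n}$ yields
\[
k_n(\projn{n}^v v, \projn{n}^v v) = \int_{\Omega_{\eps_n}} \rho^*(x) |\vo v(x)|^2 \dd x + \rho_n \eps_n \int_B |\vb v|^2 \dd x.
\]
By \eqref{rho-star} and the fact that $\rho_n \eps_n$ is bounded (it converges to $\bar\rho$ by Hypothesis \ref{hyp:params}(6)), the right-hand side is controlled by $C|\abv|^2$. As $n\to \infty$, the first term tends to $\int_\Omega \rho^* |\vo v|^2 \dd x$ by dominated convergence (the measure of $B_{\eps_n}$ vanishes), while $\rho_n \eps_n \to \bar\rho$; hence the kinetic part converges to $k(v,v)$.

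For the strain-energy part, on $\Omega_{\eps_0}$ the projection equals $\vo u$, so this contribution is simply $\int_{\Omega_{\eps_0}} a\, e(\vo u)\cdot e(\vo u)\dd x$, independent of $n$. On $B_{\eps_n}$, using $\projn{n}^u u = (\sop{\eps_n})^{-1}[\ubn]$ together with the scaling identity \eqref{newlabel}, the integral rewrites as
\[
\frac{1}{\eps_n}\int_B \rmD W_{\lambda_n,\mu_n}(e(\eps_n,\ubn))\cdot e(\eps_n,\ubn) \dd x.
\]
Factoring out $1/\eps_n$ produces the coefficients $\lambda_n/\eps_n$ and $\mu_n/\eps_n$, which converge to $(\bar\lambda,\bar\mu)$ by Hypothesis \ref{hyp:params}(2); combined with the strong $L^2$-convergence $e(\eps_n,\ubn) \to \partial_3 \vb u \symprod e_3$ established in Lemma \ref{lemma4.1}(2), this yields convergence to $\int_B \rmD W_{\bar\lambda,\bar\mu}(\partial_3 \vb u \symprod e_3)\cdot(\partial_3 \vb u \symprod e_3)\dd x$, and the same tools supply the uniform bound.

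The most delicate region is the transition zone $B_{\eps_0}^\pm \setminus B_{\eps_n}^\pm$, where $\projn{n}^u u = \xi(x_3) \vo u_n + (1-\xi(x_3))\vo u$, with $\vo u_n$ a translate of $\vo u$ by $\eps_n$ in the $x_3$-direction staying on the correct side of $S$. A direct differentiation gives
\[
e(\projn{n}^u u) = \xi\, e(\vo u_n) + (1-\xi)\, e(\vo u) + \xi'(x_3)\,(\vo u_n - \vo u) \symprod e_3,
\]
and the required bound and convergence follow from the continuity of translations in $H^1(\Omega^\pm;\R^3)$, which gives $\|\vo u_n - \vo u\|_{L^2} \leq \eps_n \|\partial_3 \vo u\|_{L^2}$ and $e(\vo u_n) \to e(\vo u)$ strongly in $L^2(B_{\eps_0}^\pm)$. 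Together with $\chi_{B_{\eps_0}^\pm \setminus B_{\eps_n}^\pm} \to \chi_{B_{\eps_0}^\pm}$ a.e., this yields convergence of the transition-zone contribution to $\int_{B_{\eps_0}^\pm} a\, e(\vo u)\cdot e(\vo u) \dd x$. Summed with the $\Omega_{\eps_0}$ piece, one obtains $\int_{\Omega \setminus S} a\, e(\vo u)\cdot e(\vo u)\dd x$, and in total $|\projn n \abv|_n^2 \to \varphi(u,u) + k(v,v) = |\abv|^2$. The main obstacle is precisely the transition zone: the $\xi'$-term in the strain is the one requiring care, and its control hinges on the $H^1$-regularity of $\vo u$ separately on $\Omega^+$ and $\Omega^-$ (so that the translation-continuity estimate can be applied without crossing the interface $S$).
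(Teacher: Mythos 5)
Your proof is correct and follows essentially the same route as the paper's: the same decomposition of $|\projn{n}\abv|_n^2$ into kinetic and strain contributions, the same splitting of the strain integral over $\Omega_{\eps_0}$, the transition zones $B_{\eps_0}^\pm\setminus B_{\eps_n}^\pm$, and $B_{\eps_n}$, and the same appeal to Lemma~\ref{lemma4.1}(2) together with Hypothesis~\ref{hyp:params}(2) on the rescaled adhesive layer. The only distinction is that you make explicit, via the pointwise symmetric-gradient formula and the $L^2$ translation-continuity estimate, the transition-zone step that the paper compresses into a single appeal to the dominated convergence theorem.
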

\begin{proof}
We start by recalling that, for every $\abv= (u,v)  \in \absp$ with $ u = (\vo u,\vb  u)$ and $ v = (\vo v, \vb v)$
and with
$\projn n \abv   = (\projn n^u(u), \projn n^v(v)) $, 
we have that
\[
\begin{aligned}
|\projn n \abv|_n^2  & =   \int_{\Omega_{\eps_n}} \rho^* |\projn n^v(v)|^2 \dd x  + \int_{B_{\eps_n}} \rho_{n} |\projn n^v(v)|^2 \dd x
\\ & \qquad
+ \int_{\Omega_{\eps_n}} a e(\projn n^u(u)) \cdot e(\projn n^u(u)) \dd x +\int_{B_{\eps_n}} \rmD W_{\lambda_n,\mu_n} ( e(\projn n^u(u))) \cdot e(\projn n^u(u)) \dd x
\\ &
\doteq I_{1}^n+I_2^n+I_3^n+I_4^n,
\end{aligned}
\]
(cf.\ \eqref{Hilbert-spaces-n}),
while  by \eqref{ip1} we have
\[
\begin{aligned}
|\abv|^2 &  = \int_{\Omega} \rho^* |\vo v|^2 \dd x + \bar\rho \int_B |\vb v|^2 \dd x + \int_{\Omega{\setminus}S} a e(\vo u) \cdot e(\vo u) \dd x +\int_B \rmD W_{\bar\lambda,\bar\mu} (\partial_3 \vb u{\symprod} e_3)\cdot (\partial_3 \vb u{\symprod} e_3) \dd x
\\ &
\doteq I_1+I_2+I_3+I_4\,.
\end{aligned}
\]
Now, by the definition \eqref{projn-v} of $\projn n^v(v)$ we have that
\begin{equation}
\label{v-integrals-1}
\begin{aligned}
I_1^n+I_2^n &  =  \int_{\Omega_{\eps_n}} \rho^* |\vo v|^2 \dd x  + \int_{B_{\eps_n}} \rho_{n} |\sop{\eps_n}^{-1} [\vb v]|^2 \dd x
\\
 &
\leq   \int_{\Omega} \rho^* |\vo v|^2 \dd x  +\rho_n \eps_n\int_B |\vb v|^2 \dd x
\leq \int_{\Omega} \rho^* |\vo v|^2 \dd x  +(\bar{\rho}+c) \int_B |\vb v|^2 \dd x
\end{aligned}
\end{equation}
 for sufficiently big $n$,
where the last estimate follows from Hyp.\ \ref{hyp:params}(6).
Indeed, by the dominated convergence theorem  and again  Hyp.\ \ref{hyp:params}
we also  have
\begin{equation}
\label{v-integrals-2}
I_1^n + I_2^n \to I_1+I_2\,.
\end{equation}
Further, taking into account that
$\Omega_{\eps_n} = \Omega_{\eps_0} \cup (B_{\eps_0}^+{\setminus}B_{\eps_n}^+) \cup (B_{\eps_0}^-{\setminus}B_{\eps_n}^-) $ and
recalling the definition
 \eqref{projn-u}  of $\projn n^u$, we have
\begin{equation}
\label{v-integrals-3}
\begin{aligned}
I_3^n =  \int_{\Omega_{\eps_0}} a e(\vo{u}) \cdot e(\vo{u}) \dd x  & +
 \int_{B_{\eps_0}^+{\setminus} B_{\eps_n}^+} a e(\xi \vo{u}(\cdot{+}\eps_n e_3) {+}(1{-}\xi) \vo{u}) \cdot e(\xi \vo{u}(\cdot{+}\eps_n e_3) {+}(1{-}\xi) \vo{u})\dd x
 \\ & +
 \int_{B_{\eps_0}^-{\setminus} B_{\eps_n}^-}   a e(\xi \vo{u}(\cdot{+}\eps_n e_3) {-}(1{-}\xi) \vo{u}) \cdot e(\xi \vo{u}(\cdot{-}\eps_n e_3) {+}(1{-}\xi) \vo{u}) \dd x
\end{aligned}
\end{equation}
and it is not difficult to check that, again by the dominated convergence theorem,
\begin{equation}
\label{I3n-quoted-later}
I_3^n \to  \int_{\Omega_{\eps_0}} a e(\vo{u}) \cdot e(\vo{u}) \dd x + \int_{B_{\eps_0}{\setminus}S} a e(\vo{u}) \cdot e(\vo{u}) \dd x =I_3\,.
\end{equation}
Hence, we also have that $I_3^n \leq C I_3$. Finally,  since $\projn n^u(u)= \vb{u}^n$ on $B_{\eps_n}$, we have
\begin{equation}
\label{v-integrals-4}
\begin{aligned}
I_4^n  & = \int_{B_{\eps_n}} \rmD W_{\lambda_n,\mu_n} ( e( \vb{u}^n)) \cdot e( \vb{u}^n) \dd x
\\
 &
 = \frac1{\eps_n} \int_{B} \rmD W_{\lambda_n,\mu_n}(e(\eps_n, \ubn)) \cdot e(\eps_n, \ubn) \dd x
 \\
  &
  = \int_{B} \left( \frac{\lambda_n}{\eps_n} |\mathrm{tr}(e(\eps_n, \ubn))|^2 {+}  \frac{2\mu_n}{\eps_n} |e(\eps_n, \ubn)|^2 \right) \dd x
  \\
   &
  \stackrel{(1)}{\longrightarrow}
  \int_B \left(\bar\lambda |\mathrm{tr}(\partial_3 \vb u {\symprod}e_3)|^2 {+} 2\bar\mu |\partial_3 \vb u {\symprod}e_3|^2  \right) \dd x
  \\
   &
  = \int_B \rmD W_{\bar\lambda,\bar\mu}(\partial_3 \vb u {\symprod}e_3) \cdot (\partial_3 \vb u {\symprod}e_3) \dd x,
 \end{aligned}
\end{equation}
where (1) is due to Hyp.\ \ref{hyp:params}(2) and to the strong convergence $e(\eps_n, \ubn)\to \partial_3 \vb u {\symprod}e_3$  in
$L^2(B;\Sym)$ due to Lemma \ref{lemma4.1}(2).  Clearly, these arguments also give $I_3^n \leq C I_3$.  This concludes the proof.
\end{proof}
\EEE
\par
Even if  the convergence notion from \eqref{4.22}  is the right one \EEE from the mechanical viewpoint, it could be of interest to translate this convergence in terms of some classical conventional convergence notions.
\begin{proposition}
\label{prop4.3}
 Let $(\abv_n)_n = (u_n,v_n)_n  $ with $\abv_n \in \absp_n  $ 
  for all $n\in \N$, converge in  the sense of Trotter to some $\abv = (u,v) \in \absp$.
Then, the following convergences hold as $n\to\infty$ 
\begin{enumerate}
\item  the sequence  $\chi_{\Omega_{\eps_n}} (u_n,e(u_n))$ converge  to
 $(\vo{u}, e(\vo{u}))$ strongly in $L^2(\Omega;\R^3)\times L^2(\Omega{\setminus}S;\R_{\sym}^{3\times3})$; \EEE
\item the sequence  $\widehat{\sop{\eps_n}[\chi_{B_{\eps_n}}u_n]} $ \EEE converge to $\widehat{\vb u}$ weakly in $L^2(B;\R^2)$;
\item the sequence   $((\sop{\eps_n}[\chi_{B_{\eps_n}} u_n])_3, e(\eps_n, \sop{\eps_n} [\chi_{B_{\eps_n}}u_n]))_n$ \EEE converge to $(u_{B3}, \partial_3 \vb{u}{\symprod} e_3)$ strongly in  $H_{\partial_3} (B) \times L^2(B;\Sym)$;
\item
moreover, if $\vb u \in H^1(B;\R^3) $, then   $\sop{\eps_n}[\chi_{B_{\eps_n}} u_n] \to \vb u$ \EEE  strongly in $L^2 (B;\R^3)$;
\item
$\chi_{\Omega_{\eps_n}} v_n \to \vo v$ strongly in $L^2(\Omega;\R^3)$;
\item $\sop{\eps_n}[\chi_{B_{\eps_n}} v_n] \to \vb v$ strongly in $L^2(B;\R^3)$.
\end{enumerate}
\end{proposition}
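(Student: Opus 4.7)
The starting point is to unpack $|\projn n \abv - \abv_n|_n \to 0$ via the splitting $|\cdot|_n^2 = \varphi_n(\cdot,\cdot) + k_n(\cdot,\cdot)$, which yields the two separate convergences $k_n(\projn n^v v - v_n, \projn n^v v - v_n) \to 0$ and $\varphi_n(\projn n^u u - u_n, \projn n^u u - u_n) \to 0$. Each of these further splits as an integral over $\Omega_{\eps_n}$ plus one over $B_{\eps_n}$, giving four elementary convergences from which the six assertions will be extracted with the aid of the definitions of $\projn n^u, \projn n^v$, Lemma \ref{lemma4.1}, and the Poincar\'e-type estimate \eqref{e:4.1}. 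Proposition \ref{prop:4.2}(1) also provides the companion uniform bound $|\abv_n|_n \leq C$.

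The routine claims are (5), (6), (1), and the strain part of (3). For (5) and (6) one uses $\rho^* \geq \bar\rho_m > 0$ on $\Omega_{\eps_n}$ and, after rescaling via $\sop{\eps_n}$, $\rho_n\eps_n \to \bar\rho > 0$ on $B_{\eps_n}$ (Hypothesis \ref{hyp:params}(6)), together with $\chi_{\Omega_{\eps_n}}\vo v \to \vo v$ in $L^2(\Omega)$ by dominated convergence. Setting $z_n := \projn n^u u - u_n \in H^1_{\Gdir}(\Omega;\R^3)$, coercivity of $a$ gives $\|e(z_n)\|_{L^2(\Omega_{\eps_n})} \to 0$, and Korn's inequality (uniform in $n$, since $\Gdir \subset \Omega_{\eps_0}$ has positive distance from $B_{\eps_0}$) upgrades this to $\|z_n\|_{H^1(\Omega_{\eps_n};\R^3)} \to 0$; combined with $\projn n^u u \to \vo u$ in $H^1(\Omega_\eta;\R^3)$ for every $\eta > 0$ (from \eqref{projn-u} and dominated convergence), we obtain (1). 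For the strain part of (3), the rescaling identity $e(\eps_n, \sop{\eps_n}[w]) = \eps_n \sop{\eps_n}[e(w)]$ together with $\mu_n/\eps_n \to \bar\mu > 0$ converts the $B_{\eps_n}$-part of $\varphi_n(z_n,z_n) \to 0$ into $\|e(\eps_n,\sop{\eps_n}[z_n])\|_{L^2(B;\Sym)} \to 0$; Lemma \ref{lemma4.1}(2) applied to $\sop{\eps_n}[\projn n^u u|_{B_{\eps_n}}] = \ubn$ then gives $e(\eps_n, \sop{\eps_n}[u_n]) \to \partial_3 \vb u \symprod e_3$ strongly.

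For the $H_{\partial_3}(B)$ part of (3), the identity $\partial_3(\sop{\eps_n}[u_n])_3 = e(\eps_n,\sop{\eps_n}[u_n])_{33}$ already handles the derivative, while $L^2$-convergence of $(\sop{\eps_n}[u_n])_3$ itself follows from a one-dimensional Poincar\'e inequality in $x_3$ whose boundary term is the trace of $u_{n,3}$ on $S_{\eps_n}^+$: by $\|z_n\|_{H^1(\Omega_{\eps_n})} \to 0$ and $\vb u^n(\cdot, \pm\eps_n) = \gamma_S(\vo u^\pm)$ this trace converges in $L^2(S)$ to $\gamma_S(\vo u^+)_3 = \gamma_{S^+}(\vb u)_3$. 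Claim (4), under $\vb u \in H^1(B;\R^3)$, is then obtained by an analogue of \eqref{Poincare} applied to $\sop{\eps_n}[z_n]$ together with the strong $L^2$ convergence $\ubn \to \vb u$ granted by Lemma \ref{lemma4.1}(3).

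The main technical obstacle is claim (2). First, $L^2$-boundedness of $\sop{\eps_n}[u_n]$ is obtained from \eqref{e:4.1} applied to $u_n$ using $\mu_n\int_{B_{\eps_n}}|e(u_n)|^2 \leq C$ (whence $\eps_n^2\int_{B_{\eps_n}}|e(u_n)|^2 = O(\eps_n)$). Writing $\widehat{\sop{\eps_n}[u_n]} = \widehat{\sop{\eps_n}[z_n]} + \widehat{\ubn}$ and invoking Lemma \ref{lemma4.1}(1) reduces matters to $\widehat{\sop{\eps_n}[z_n]} \weakto 0$ in $L^2(B;\R^2)$. Any weak subsequential limit $g$ satisfies $\partial_3 g = 0$ distributionally, via the identity $\partial_3(\sop{\eps_n}[z_n])_i = 2e(\eps_n,\sop{\eps_n}[z_n])_{i3} - \eps_n\partial_i(\sop{\eps_n}[z_n])_3$ tested against $\psi \in C_c^\infty(B)$: the first term vanishes in $L^2(B)$ and the second pairs up as $\eps_n\langle(\sop{\eps_n}[z_n])_3, \partial_i\psi\rangle$, which vanishes because $(\sop{\eps_n}[z_n])_3 \to 0$ in $L^2(B)$ (shown as in the previous paragraph). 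Hence $g = g(\hat x)$ depends only on $\hat x$; identifying $g = 0$ requires testing against $x_3$-independent profiles $\eta(\hat x)$ and controlling the layer-average $\eps_n^{-1}\int_{-\eps_n}^{\eps_n}\widehat{z_n}(\hat x,t)\dd t$ through the decomposition $\partial_3 z_{n,i} = 2e(z_n)_{i3} - \partial_i z_{n,3}$ followed by an integration by parts in $\hat x$. The delicate bookkeeping of scales, matching the vanishing trace of $z_n$ on $S_{\eps_n}^\pm$ against the bound $\int_{B_{\eps_n}}|e(z_n)|^2 = o(\eps_n^{-1})$ and the coarse control $\int_{B_{\eps_n}}|z_n|^2 = O(\eps_n)$, is the technically demanding piece of the proof.
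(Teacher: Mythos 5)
Your plan matches the paper's: unpack the Trotter convergence through the splitting $|\cdot|_n^2 = \varphi_n(\cdot,\cdot)+k_n(\cdot,\cdot)$, treat the $\Omega_{\eps_n}$- and $B_{\eps_n}$-contributions separately, and feed the $B$-side through Lemma~\ref{lemma4.1}. Items (1), (3), (5), (6) are handled essentially as in the paper. One recurring inaccuracy: you speak of ``the vanishing trace of $z_n$ on $S_{\eps_n}^\pm$'' where $z_n:=\projn{n}^u(u)-u_n$, but this trace is not zero --- the trace of $\projn{n}^u(u)$ is $\gamma_S(\vo{u}^\pm)$ by construction of $\vb{u}^n$, whereas the trace of $u_n$ only \emph{converges} to $\gamma_S(\vo{u}^\pm)$ in $L^2(S;\R^3)$ as a consequence of item (1). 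The same issue affects your ``analogue of \eqref{Poincare} applied to $\sop{\eps_n}[z_n]$'' for item (4), since \eqref{Poincare} (via \eqref{added-general-f}) presupposes vanishing traces on $S^\pm$.

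The genuine gap is item (2). You reduce it to $\widehat{\sop{\eps_n}[z_n]}\weakto 0$, pass to a weak subsequential limit $g$, prove $\partial_3 g=0$, and then try to identify $g=0$ by testing against $x_3$-independent profiles and controlling layer averages through an integration by parts in $\hat x$ --- a step you explicitly defer as ``the technically demanding piece'' and do not carry out. As written this is not a proof, and the difficulty is real: for $i=1,2$ the term $\eps_n\partial_i(\sop{\eps_n}[z_n])_3$ in $\partial_3(\sop{\eps_n}[z_n])_i = 2\,e(\eps_n,\sop{\eps_n}[z_n])_{i3}-\eps_n\partial_i(\sop{\eps_n}[z_n])_3$ is controlled only distributionally, so you cannot upgrade the weak $L^2$-convergence of $(\sop{\eps_n}[z_n])_i$ to weak $H_{\partial_3}$-convergence and read the trace of $g_i$ as you did for the third component. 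The paper's route avoids all this: apply \eqref{e:4.1} to $z_n$ itself, not just to $u_n$ (where you only extract boundedness). After dividing by $\eps_n$,
\[
\int_B |\sop{\eps_n}[z_n]|^2\dd\tilde x \ \leq\ 2\int_{S_{\eps_n}}|z_n|^2\dd\hat x + 2C\,\eps_n\!\int_{B_{\eps_n}}|e(z_n)|^2\dd x + 2C\,\eps_n\!\int_{\Omega_{\eps_n}}|e(z_n)|^2\dd x\,,
\]
and each term vanishes: the first by the trace convergence following from item (1); the second since it is $\leq \tfrac{\eps_n}{\mu_n}\int_{B_{\eps_n}}\rmD W_{\lambda_n,\mu_n}(e(z_n))\cdot e(z_n)\dd x=\tfrac{\eps_n}{\mu_n}\,o(1)$ with $\mu_n/\eps_n\to\bar\mu>0$; the third since $\eps_n\to0$ and $\int_{\Omega_{\eps_n}}|e(z_n)|^2\to0$. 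Hence $\sop{\eps_n}[z_n]\to 0$ \emph{strongly} in $L^2(B;\R^3)$, and items (2), (4) and the $L^2$-part of (3) all follow at once from the decomposition $\sop{\eps_n}[u_n]=\ubn-\sop{\eps_n}[z_n]$ and Lemma~\ref{lemma4.1}(1)--(3), with no weak-limit identification needed.
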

\begin{proof}
 Item (1) is an immediate consequence of the definition
\eqref{projn-u}
of the operator
 $\projn{n}^u$. \EEE
\par
 As for items (2), (3), (4), the key point is to observe that the convergence in the sense of Trotter of $(u_n,v_n)_n  $  to $(u,v)$
yields that (here,  for simplicity we will write $u_n$ in place of $\chi_{B_{\eps_n}} u_n$)
\[
 \int_{B} \left( \frac{\lambda_n}{\eps_n} |\mathrm{tr}(e(\eps_n,\sop{\eps_n}[\vb{u}^n - u_n]))|^2 {+}  \frac{2\mu_n}{\eps_n}
  |e(\eps_n,\sop{\eps_n}[\vb{u}^n - u_n]))|^2 \right) \dd x  \to 0
\]
(cf.\ the calculations for \eqref{v-integrals-4}), whence
\[
e(\eps_n,\sop{\eps_n}[\vb{u}^n - u_n]) \to 0 \text{ strongly in } L^2(B;\Sym).
\]
Combining this information with the second convergence in Lemma \ref{lemma4.1}(2), we immediately deduce that
$e(\eps_n,\sop{\eps_n}[u_n]\to \partial_3 \vb{u}{\symprod} e_3$ strongly in $L^2(B;\Sym)$, whence the strong convergence of
$(\sop{\eps_n}[u_n])_3$ to $u_{B3}$. This proves item (3).
Next, taking into account \eqref{added-general-f} we also infer that the sequence $(\vb{u}^n - u_n)_n$ is bounded in $L^2(B;\R^3)$, and then items (2) \& (4) follow from items (1) \& (3) in Lemma \ref{lemma4.1}.
\par
As for items (5) \& (6), from the Trotter convergence of $(u_n,v_n)_n  $  to $(u,v)$ we also deduce, in particular, that
\[
\begin{aligned}
&
J_1^n : = \int_{\Omega_{\eps_n}} \rho^* \left( \vo{v}{-}v_n\right)^2 \dd x  \to 0 &&\text{as } n \to\infty,
\\
&
J_2^n : = \int_{B_{\eps_n}}\rho_n \left(\sop{\eps_n}^{-1}[\vb{v}]{-} \chi_{B_{\eps_n}}v_n \right)^2 \dd x  \to 0 &&\text{as } n \to\infty.
\end{aligned}
\]
Now, from $J_1^n\to0$ we immediately deduce item (5); we then observe that
\[
0 = \lim_{n\to\infty}J_2^n  = \lim_{n\to\infty} \int_{B}\rho_n \eps_n\left(\vb{v}{-} \sop{\eps_n}[\chi_{B_{\eps_n}}v_n] \right)^2.
\]
Recalling that $\rho_n\eps_n\to\bar{\rho}>0$ by Hyp.\ \ref{hyp:params}(6), we immediately infer item (6). \EEE
\end{proof}
\subsubsection{Convergence results}
In order to apply Theorem \ref{thm:Trotter} establishing the   convergence in the sense of Trotter (cf.\ \eqref{4.22}) \EEE  of $(\abv_n)_n$ to $\abv$ uniformly on $[0,T]$, it is sufficient to impose suitable conditions on the initial data, which we shall discuss at the end of this section,  and to
check the validity of conditions \eqref{data-converg} and \eqref{trotter-hyp},
 with the operators
  $\abop_n: \absp_n  \rightrightarrows \absp_n$
 with domains
\begin{subequations}
\label{opAn}
\begin{equation}
\label{domain-opn}
\begin{aligned}
\rmD(\abop_n) \colon =   \Big\{   \abv = (u,v) \in \absp\, : \
&   (1) \, v \in  H_{\Gdir}^1(\Omega;\R^3)
\\
&
   (2)  \, \exists\, (w,\xi) \in L^2 (\Omega;\R^3)  \times   L^2(B_{\eps_n};\R^3)\EEE \text{ with } \xi \in   \partial \calD(e(v))   \text{ a.e.\ in } B_{\eps_n},
 \text{ s.t. }
   \\
   & \qquad   k_n(w,v')   +  \varphi_n (u,v') + b_n \int_{B_{\eps_n}} \xi \cdot e(v') \dd x =0 \text{ for all } v' \in     H_{\Gdir}^1(\Omega;\R^3)
\Big\}\,,
\end{aligned}
\end{equation}
defined at every $\abv = (u,v)$ by
\begin{equation}
\label{ope-n}
\abop_n \abv : = \left(\begin{array}{cc}
-v
\\
0
\end{array} \right) +  \left\{
 \left(\begin{array}{cc}
0
\\
-w
\end{array} \right) \, : \   \text{$w$ as in  \eqref{domain-opn}(2)} \right\}. \EEE
\end{equation}
\end{subequations}
   and the data
   \begin{equation}
   \label{data-n}
   (\abfo_n)_n , \  (\abv_0^n)_n \quad \text{as in       \eqref{data-Cauchy-abs}.}
   \end{equation}
  \EEE
Condition \eqref{trotter-hyp}  follows from the following result.
\begin{proposition}
\label{prop:4.4}
There holds
\begin{equation}
\label{trotter-statement}
\lim_{n\to\infty} | \projn n( (\mathsf{I}{+}\abop)^{-1}(\Psi))  {-} ((\mathsf{I}{+} \abopn n )^{-1}( \projn n  (\Psi)))|_n =0 \quad \text{for all } \Psi \in \absp.
\end{equation}
\end{proposition}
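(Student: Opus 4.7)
The strategy is to exploit the variational characterization of the resolvents provided by Proposition \ref{prop:fromLLOO} and its analogue \eqref{def-resolv-lim} for $\abop$. Fix $\Psi = (\psi_1,\psi_2) \in \absp$ and set
$\bar{\mathsf{X}} = (\bar u, \bar v) = (\mathsf{I}{+}\abop)^{-1}(\Psi)$, so that $\bar u = \bar v + \psi_1$ and $\bar v$ minimizes $\mathcal{J}$ from \eqref{energy-J-eps-new} on $\Spu$; and $\bar{\mathsf{X}}_n = (\bar u_n, \bar v_n) = (\mathsf{I}{+}\abop_n)^{-1}(\projn n \Psi)$, so that $\bar u_n = \bar v_n + \projn n^u\psi_1$ and $\bar v_n$ minimizes the analogous functional $\mathcal{J}_n$ on $H^1_{\Gdir}(\Omega;\R^3)$, namely
\[
\mathcal{J}_n(v) = \tfrac12 k_n(v,v) - k_n(\projn n^v\psi_2,v) + \tfrac12 \varphi_n(v,v) + \varphi_n(\projn n^u\psi_1,v) + b_n\int_{B_{\eps_n}}\calD(e(v))\,\dd x.
\]
Since $\projn n^u$ is linear, the relation $\bar u{-}\bar v=\psi_1$ and $\bar u_n{-}\bar v_n = \projn n^u\psi_1$ yield $\projn n^u\bar u - \bar u_n = \projn n^u\bar v - \bar v_n$, so
\[
|\projn n\bar{\mathsf X} - \bar{\mathsf X}_n|_n^2 = \varphi_n(\projn n^u\bar v - \bar v_n,\,\projn n^u\bar v - \bar v_n) + k_n(\projn n^v\bar v - \bar v_n,\,\projn n^v\bar v - \bar v_n).
\]
The goal reduces to showing that both quadratic forms vanish in the limit.

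The first step is a \emph{recovery sequence} computation: testing the minimality of $\bar v_n$ against $w_n := \projn n^u(\bar v)$ gives $\mathcal{J}_n(\bar v_n)\leq \mathcal{J}_n(w_n)$. Using Lemma~\ref{lemma4.1}, the computations performed in Proposition~\ref{prop:4.2} (namely the analysis of $I_j^n\to I_j$), together with Hypothesis~\ref{hyp:params}(2)--(6) and the hypothesis $\lim b_n\eps_n = 0$ combined with \eqref{dissipation-limit-c}, one checks that
\[
\mathcal{J}_n(w_n) \xrightarrow[n\to\infty]{} \mathcal{J}(\bar v).
\]
For the dissipation piece, after the change of variable through $\sop{\eps_n}$ one obtains $b_n\int_{B_{\eps_n}}\calD(e(w_n))\,\dd x = b_n\eps_n\int_B \calD(\tfrac{1}{\eps_n}e(\eps_n,\sop{\eps_n}[w_n]))\,\dd x$; writing $\calD = \calD^{\infty,p} + (\calD - \calD^{\infty,p})$ and using the $p$-homogeneity of $\calD^{\infty,p}$ together with \eqref{dissipation-limit-c} and the strong convergence of $e(\eps_n,\ubn)$ supplied by Lemma~\ref{lemma4.1}(2) gives the limit $\bar b\int_B \calD^{\infty,p}(\partial_3\vb{\bar v}\symprod e_3)\,\dd x$ (when $\bar b<\infty$; the case $\bar b=\infty$ is handled by showing that otherwise the right-hand side would blow up, forcing $\partial_3 \vb{\bar v}=0$ in the limit). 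This yields the uniform bound $\mathcal{J}_n(\bar v_n)\le C$ and hence, via coercivity of $\varphi_n + k_n$ on $\absp_n$ (Korn's inequality combined with Hypothesis~\ref{hyp:params}(6)), uniform bounds from which the compactness of Proposition~\ref{prop4.3} extracts a Trotter limit $v^*\in\Spu$ of $\bar v_n$ along some subsequence.

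The second step is a \emph{liminf inequality}. By weak lower semicontinuity of the quadratic forms $\varphi,k$ with respect to the convergences in Proposition~\ref{prop4.3}, together with a standard lower semicontinuity argument for the integral functional $q\mapsto \int_B \overline{\calD}(q\symprod e_3)\,\dd x$ based on \eqref{dissipation-limit-c}, one obtains $\liminf_n \mathcal{J}_n(\bar v_n)\geq \mathcal{J}(v^*)$. Combining with the limsup from the first step and the minimality of $\bar v_n$ gives $\mathcal{J}(v^*)\leq \mathcal{J}(\bar v)$, hence by uniqueness of the minimizer $v^* = \bar v$ and convergence holds along the whole sequence.

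Finally, to upgrade to strong Trotter convergence, I use strict convexity. From
\[
\mathcal{J}_n\!\left(\tfrac{\bar v_n + \projn n^u\bar v}{2}\right) \le \tfrac12\mathcal{J}_n(\bar v_n) + \tfrac12\mathcal{J}_n(\projn n^u\bar v) - \tfrac18\bigl[\varphi_n(\bar v_n - \projn n^u\bar v,\cdot) + k_n(\bar v_n - \projn n^u\bar v,\cdot)\bigr]
\]
(the parallelogram identity applied to the quadratic parts, together with convexity of the dissipation term), combined with $\mathcal{J}_n(\bar v_n)\to \mathcal{J}(\bar v)$, $\mathcal{J}_n(\projn n^u\bar v)\to \mathcal{J}(\bar v)$, and the liminf inequality applied to $\tfrac12(\bar v_n + \projn n^u\bar v)$ (which still Trotter-converges to $\bar v$), one concludes
\[
\varphi_n(\bar v_n - \projn n^u\bar v,\bar v_n - \projn n^u\bar v) + k_n(\bar v_n - \projn n^u\bar v, \bar v_n - \projn n^u\bar v) \xrightarrow[n\to\infty]{} 0,
\]
which, in view of the identity recalled at the beginning, is exactly \eqref{trotter-statement}.

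The main obstacle I anticipate is the careful handling of the dissipation term in both the recovery-sequence step and in the liminf inequality: one must trade between the scaling $b_n$, the volume factor $\eps_n$, the $p$-homogeneity of $\calD^{\infty,p}$, the remainder estimate \eqref{dissipation-limit-c}, and the strong convergence of $e(\eps_n,\ubn)$ furnished by Lemma~\ref{lemma4.1}. The three regimes $\bar b\in[0,\infty)$, $\bar b = \infty$, and the subcase $\bar b\in(0,\infty)$ each require slight adaptations, and the passage to the limit in the weak formulation of the Euler--Lagrange equation for $\bar v_n$ (in order to identify $\xi_n \rightharpoonup \xi \in \partial\overline{\calD}(\partial_3\vb{\bar v}\symprod e_3)$) is the most delicate point.
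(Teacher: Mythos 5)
Your overall architecture is the same as the paper's: you reduce \eqref{trotter-statement} via the minimization characterizations of the two resolvents (Proposition \ref{prop:fromLLOO} and \eqref{def-resolv-lim}), build a recovery sequence to get $\limsup_n\mathcal{J}_n(\bar v_n)\le\mathcal{J}(\bar v)$, use weak lower semicontinuity and strict convexity to identify the limit of the minimizers, and then upgrade to convergence in the $|\cdot|_n$-norms; your linearity observation $\projn n^u\bar u-\bar u_n=\projn n^u\bar v-\bar v_n$ is exactly the paper's third step. However, there are two genuine gaps, both located precisely at the feature that distinguishes this paper from \cite{LLOO}, namely the non-vanishing mass of the layer (Hypothesis \ref{hyp:params}(6)). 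First, you take the recovery sequence to be $w_n:=\projn n^u(\bar v)$ for the limit minimizer $\bar v\in\Spu$ and claim $\mathcal{J}_n(w_n)\to\mathcal{J}(\bar v)$. For the kinetic term this requires $\rho_n\eps_n\int_B|\ubn|^2\dd x\to\bar\rho\int_B|\vb{\bar v}|^2\dd x$, i.e.\ norm convergence of $\ubn$ in $L^2(B;\R^3)$; but Lemma \ref{lemma4.1} only gives \emph{weak} $L^2$ convergence of the in-plane components $\wubn$ unless $\vb{\bar v}\in H^1(B;\R^3)$, which a generic element of $\Spu$ (with $\vb{\bar v}$ only in $H_{\partial_3}(B;\R^3)$) need not satisfy. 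Since this term enters with a positive sign, weak convergence gives the wrong inequality, so the limsup you need is not justified. The paper circumvents this in Lemma \ref{lemma:4.2} by first proving the recovery property on the dense subset $(H_{\Gdir}^1(\Omega{\setminus}S;\R^3)\times H^1(B;\R^3))\cap\Spu$ (where Lemma \ref{lemma4.1}(3) applies) and then diagonalizing; your proposal omits this step, and it cannot be skipped.

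Second, your final parallelogram/strict-convexity step inherits the same problem and adds a new one. It explicitly uses $\mathcal{J}_n(\projn n^u\bar v)\to\mathcal{J}(\bar v)$ (unjustified, as above), and even granting that, it yields smallness of $k_n(\bar v_n-\projn n^u\bar v,\bar v_n-\projn n^u\bar v)$, whereas by your own reduction the Trotter distance requires $k_n(\bar v_n-\projn n^v\bar v,\cdot)$ in the velocity slot. Converting one into the other demands $\rho_n\eps_n\int_B|\ubn-\vb{\bar v}|^2\dd x\to0$, i.e.\ once more the strong $L^2(B)$ convergence of $\ubn$ that is unavailable in general. The paper's Lemma \ref{lemma:4.3} avoids both issues: it first derives the separate norm convergences $\varphi_n(\bar v_n,\bar v_n)\to\varphi(\bar v,\bar v)$ and $k_n(\bar v_n,\bar v_n)\to k(\bar v,\bar v)$ from a limsup/liminf sandwich, and then expands $|\projn n\widetilde{\mathsf{V}}-\widetilde{\mathsf{V}}_n|_n^2$ and shows $(\projn n\widetilde{\mathsf{V}},\widetilde{\mathsf{V}}_n)_n\to|\widetilde{\mathsf{V}}|^2$; in the kinetic part of this mixed term the projection $\projn n^v\bar v$ pairs against $\bar v_n$, so after rescaling only the \emph{weak} convergence of $\sop{\eps_n}[\bar v_n]$ against the fixed function $\vb{\bar v}$ is needed, which is exactly what the compactness step provides. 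Incidentally, the delicate identification $\xi_n\rightharpoonup\xi\in\partial\overline{\calD}(\partial_3\vb{\bar v}\symprod e_3)$ you flag at the end is not needed at all: the paper argues entirely at the level of the minimization problems, never passing to the limit in the Euler--Lagrange inclusions.
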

\noindent
The proof of Proposition \ref{prop:4.4} is postponed after the statement of Proposition
\ref{prop:4.5}, where we are going to check
  \eqref{data-converg}. With this aim, we need to impose
   an additional condition on  the external loading $g$.
   \begin{hypothesis}
   \label{hyp-data}
   We suppose that 
    $ g \in  W^{2,\infty}(0,T; L^2(\Gneu;\R^3))$ fulfills \EEE
\begin{equation}
\label{H4}
\begin{aligned}
&
\mathrm{supp}(g) \cap \overline{B}_{\eps_0} = \emptyset \quad \text{for all } t \in [0,T] \text{ and }
\\
&
\text{if } \min\{\mathcal{H}^2(\Gdir^+), \mathcal{H}^2(\Gdir^-)\}=0, \text{ say }  \mathcal{H}^2(\Gdir^-) =0, \text{ then } \mathrm{supp}(g) \cap (\partial\Omega_{\eps_0}^-) = \emptyset\,.
\end{aligned}
\end{equation}
\end{hypothesis}
Observe that \eqref{H4} guarantees
 that the support of $g$ lies outside $\overline{B}_{\eps_0} $ and that, if the lower adhering body is not clamped, then there are no surface forces imposed on its boundary.
  Under the additional Hyp.\ \ref{hyp-data} we shall have the following result, whose proof is postponed to that of Prop.\ \ref{prop:4.4}. \EEE
\begin{proposition}
\label{prop:4.5}
There holds
\begin{enumerate}
\item $\lim_{n\to\infty} \int_0^T |\projn n (\abfo(t)) {-} \abfo_n(t)|_n \dd t =0$;
\item $\lim_{n\to\infty} \sup_{t\in [0,T]} |\projn n (\abv^{\mathrm{e}}(t)) {-} \abv^{\mathrm{e}}_n (t)|_n =0$,
\end{enumerate}
 where, according to the decomposition from \eqref{added-label-Gio}, $\abv^{\mathrm{e}}$ and $\abv^{\mathrm{e}}_n$
 are the `stationary' parts of the solutions $\abv$ and
 $\abv_n$. \EEE
 \end{proposition}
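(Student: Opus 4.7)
We treat item (2) first, which is the substantive part of the statement. Since $\abv^{\mathrm{e}}(t)=(\ue(t),0)$ and $\abv^{\mathrm{e}}_n(t)=(\uen(t),0)$ both have vanishing velocity component and $\projn n^v(0,0)=0$, the quantity to bound is
\[
|\projn n(\abv^{\mathrm{e}}(t)) - \abv^{\mathrm{e}}_n(t)|_n^2 \;=\; \varphi_n(u_n^\star(t)-\uen(t),\,u_n^\star(t)-\uen(t)),
\]
where $u_n^\star(t):=\projn n^u(\ue(t))\in H^1_{\Gdir}(\Omega;\R^3)$. Since $u_n^\star(t)-\uen(t)$ is admissible in \eqref{p-ue-1}, expanding the square and twice invoking $\varphi_n(\uen(t),\cdot)=L(t)(\cdot)$ on $H^1_{\Gdir}(\Omega;\R^3)$ produces the C\'ea-type identity
\begin{equation}\label{plan-cea}
\varphi_n(u_n^\star(t)-\uen(t),\,u_n^\star(t)-\uen(t)) \;=\; \varphi_n(u_n^\star(t),u_n^\star(t)) \;-\; 2L(t)(u_n^\star(t)) \;+\; L(t)(\uen(t)).
\end{equation}

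The first term on the right-hand side of \eqref{plan-cea} converges to $\varphi(\ue(t),\ue(t))$ by applying Proposition \ref{prop:4.2}(2) to $\abv=(\ue(t),0)$. The second term is in fact independent of $n$: by Hypothesis \ref{hyp-data} the support of $g(t)$ is disjoint from $\overline{B}_{\eps_0}$, while \eqref{projn-u} ensures $u_n^\star(t)\equiv\ue(t)$ on $\Omega_{\eps_0}$, so $L(t)(u_n^\star(t))=L(t)(\ue(t))=\varphi(\ue(t),\ue(t))$, the last equality coming from testing \eqref{stati-ue-lim} with $v=\ue(t)$. Thus item (2) reduces to $L(t)(\uen(t))\to\varphi(\ue(t),\ue(t))$. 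The $\liminf$ bound is immediate from the fact that $\uen(t)$ is the global minimizer on $H^1_{\Gdir}(\Omega;\R^3)$ of $\tfrac12\varphi_n(\cdot,\cdot)-L(t)(\cdot)$: comparing its value with that at $u_n^\star(t)$ gives $L(t)(\uen(t))\geq 2L(t)(u_n^\star(t))-\varphi_n(u_n^\star(t),u_n^\star(t))$, whose limit is $\varphi(\ue(t),\ue(t))$.

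For the matching $\limsup$, the strategy is to extract a weak limit of $(\uen(t))_n$ and exploit uniqueness of the limit stationary problem. Hypothesis \ref{hyp-data}, combined with the positive distance between $\Gdir$ and $B_{\eps_0}$ (and with Hypothesis \ref{hyp:params}(4) when $\mathcal{H}^2(\Gdir^\pm)$ may vanish), allows one to bound $|L(t)(\uen(t))|$ by $C\,\|e(\uen(t))\|_{L^2(\Omega_{\eps_0})}$ via Korn's inequality on the fixed region $\Omega_{\eps_0}$ together with trace estimates confined to $\Gneu\setminus\overline{B}_{\eps_0}$; combined with $\varphi_n(\uen(t),\uen(t))=L(t)(\uen(t))$ and the uniform ellipticity of $a$, this yields $\varphi_n(\uen(t),\uen(t))\leq C$. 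The compactness arguments (1)--(7) of Sec.\ \ref{ss:4.1} then provide, along a subsequence, a limit $u^\sharp(t)\in\Spu$ enjoying the convergences of Proposition \ref{prop4.3}. Passing to the limit in $\varphi_n(\uen(t),v)=L(t)(v)$ tested on lifts $\projn n^u(v')$ of arbitrary $v'\in\Spu$ (with the algebra used in justifying Proposition \ref{prop:4.4}) shows that $u^\sharp(t)$ solves \eqref{stati-ue-lim}, so $u^\sharp(t)=\ue(t)$ by uniqueness, and strong trace convergence on $\partial\Omega\setminus\overline{B}_{\eps_0}\supset\mathrm{supp}\,g(t)$ then yields $L(t)(\uen(t))\to L(t)(\ue(t))$, closing item (2) for each fixed $t$. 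Uniformity in $t$ is then recovered by equicontinuity: the regularity $g\in W^{2,\infty}(0,T;L^2(\Gneu;\R^3))$ transfers, through differentiation of \eqref{p-ue-1} and \eqref{stati-ue-lim} in $t$ and the same energy estimates, to a uniform-in-$n$ Lipschitz bound on $t\mapsto \varphi_n(u_n^\star(t)-\uen(t),u_n^\star(t)-\uen(t))^{1/2}$, and pointwise plus equi-Lipschitz convergence upgrades to uniform convergence on $[0,T]$.

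Finally, for item (1), the velocity component of $\projn n(\abfo(t))-\abfo_n(t)$ reduces by \eqref{projn-v} and \eqref{varrho} to $-\chi_{B_{\eps_n}}f(t)/\rho_n$, whose squared $k_n$-norm equals $\tfrac{1}{\rho_n}\int_{B_{\eps_n}}|f(t)|^2\,\dd x$; since Hypothesis \ref{hyp:params}(6) gives $1/\rho_n\leq C\eps_n$ for large $n$ and $\int_{B_{\eps_n}}|f(t)|^2\,\dd x\to 0$ pointwise in $t$ by absolute continuity (with uniform-in-$t$ domination by $C\|f\|_{L^\infty(0,T;L^2)}^2$), dominated convergence produces the required $L^1(0,T)$-decay. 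The displacement component $-(\projn n^u(\partial_t\ue(t))-\partial_t\uen(t))$ is controlled by applying item (2) \emph{verbatim} to $\partial_t\ue$ and $\partial_t\uen$, which solve the same stationary problems driven by $\partial_t g\in W^{1,\infty}(0,T;L^2(\Gneu;\R^3))$ still satisfying Hypothesis \ref{hyp-data}; the resulting uniform-in-$t$ decay integrates to the claimed $L^1(0,T)$-decay. The main obstacle in the whole argument is the $\limsup$ step in item (2): producing a uniform energy bound on $\uen(t)$ without invoking any coercivity on the vanishing-coefficient layer $B_{\eps_n}$. It is precisely here that Hypothesis \ref{hyp-data} is indispensable, since it confines the load $L(t)$ to the $n$-independent bulk region $\Omega_{\eps_0}$, where $a$ is uniformly elliptic and Korn/trace estimates are available.
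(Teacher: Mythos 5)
Your proof is correct and follows essentially the same route as the paper: the force/velocity component is handled by the same direct computation plus dominated convergence, and the stationary parts are compared through the minimality of $\uen(t)$ and $\ue(t)$, the recovery operator $\projn n^u$ together with Proposition \ref{prop:4.2}, the a priori bound made possible by Hypothesis \ref{hyp-data}, and a time-equicontinuity upgrade for uniformity — exactly the ingredients the paper invokes by referring to the arguments of Proposition \ref{prop:4.4}. Your C\'ea-type identity and the liminf/limsup reduction to $L(t)(\uen(t))\to\varphi(\ue(t),\ue(t))$ merely spell out, in this purely quadratic setting, the minimizer-convergence machinery that the paper cites rather than writes down.
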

\par
Let us now proceed with the \underline{\textbf{proof of Proposition \ref{prop:4.4}}}, which is split into  three \EEE steps. The main idea is to exploit
the characterizations of the resolvents of $\abopn n$ and $\abop$ provided by
Proposition \ref{prop:fromLLOO} and by \eqref{def-resolv-lim}, respectively.
In what follows, we will  consider  a \emph{fixed} element $\Psi = (\psi_1,\psi_2) \in \absp$.
\paragraph{\underline{First step}:} We prove the following
\begin{lemma}
\label{lemma:4.2}
For all $w\in \Spu$ there exists a sequence $(w_n)_n\subset H_{\Gdir}^1(\Omega;\R^3)$ such that
\[
\lim_{n\to\infty} \varphi_n(w_n{-} \projn n^u (w),  w_n{-} \projn n^u (w)) =0,
\]
and each term of
\begin{equation}
\label{tilde-Jn}
\widetilde{\mathcal{J}}_n(w_n) : = \frac12 \varphi_n(w_n,w_n) + \frac12 k_n(w_n,w_n) + b_n \int_{B_{\eps_n}} \mathcal{D}(e(w_n)) \dd x + \varphi_n(\projn n^u (\psi_1),w_n)
 -k_n(\projn n^v (\psi_2),w_n)
 \end{equation}
converges to the corresponding term of
\[
\mathcal{J}(w): = \frac12\varphi(w,w) +\frac12 k(w,w) + \int_{B}\overline{\mathcal{D}} (\partial_3 \vb{w}\EEE{\symprod} e_3) \dd x + \varphi(\psi_1,w) - k(\psi_2,w),
\]
 cf.\
\eqref{energy-J-eps-new}.
\end{lemma}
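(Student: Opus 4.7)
The natural ansatz is $w_n := \projn n^u(w)$, which makes $\varphi_n(w_n-\projn n^u(w),w_n-\projn n^u(w))=0$ trivially; the work then lies in the term-by-term convergence of $\widetilde{\mathcal{J}}_n(w_n)$ to $\mathcal{J}(w)$, which I plan to establish via a regular-case computation followed by a density/diagonalization argument.

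\textbf{Step 1 (assume $\vb w\in H^1(B;\R^3)$).} Under this extra regularity, for $w_n=\projn n^u(w)$ the convergence $\tfrac12 \varphi_n(w_n,w_n)\to\tfrac12\varphi(w,w)$ is obtained by a verbatim repetition of the calculations $I_3^n\to I_3$ and $I_4^n\to I_4$ from Proposition \ref{prop:4.2}: on the layer one rewrites the integrand as $\tfrac1{\eps_n}\rmD W_{\lambda_n,\mu_n}(e(\eps_n,\ubn))\cdot e(\eps_n,\ubn)$ and uses Hypothesis \ref{hyp:params}(2) together with the strong convergence $e(\eps_n,\ubn)\to\partial_3\vb w\symprod e_3$ in $L^2(B;\Sym)$ from Lemma \ref{lemma4.1}(2). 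For $\tfrac12 k_n(w_n,w_n)$, the bulk part is handled by dominated convergence, while the layer part equals $\rho_n\eps_n\int_B|\ubn|^2\dd x$: here Hypothesis \ref{hyp:params}(6) and \emph{precisely} Lemma \ref{lemma4.1}(3) give $\rho_n\eps_n\int_B|\ubn|^2\dd x\to\bar\rho\int_B|\vb w|^2\dd x$, so the extra regularity of $\vb w$ is essential. For the dissipation, the change of variables yields
\begin{equation*}
b_n\int_{B_{\eps_n}}\calD(e(w_n))\dd x = b_n\eps_n\int_B\calD\bigl(\tfrac1{\eps_n}e(\eps_n,\ubn)\bigr)\dd y;
\end{equation*}
I would split $\calD=\calD^{\infty,p}+(\calD-\calD^{\infty,p})$, extract the factor $b_n/\eps_n^{p-1}\to\bar b$ using the $p$-homogeneity of $\calD^{\infty,p}$, and control the residual via Hypothesis \ref{hyp:dissipation} together with $b_n\eps_n\to 0$ and the strong $L^2$-convergence of $e(\eps_n,\ubn)$. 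The case $\bar b=\infty$ forces $\partial_3\vb w=0$, in which case the $p$-growth bound \eqref{dissipation} suffices. The two cross terms follow from the same convergences by bilinearity.

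\textbf{Step 2 (density and diagonalization).} For a general $w=(\vo w,\vb w)\in\Spu$, pick an $H^1(B;\R^3)$-lift $\vb w_{\mathrm{lift}}$ of the traces $\gamma_S(\vo w^\pm)\in H^{1/2}(S;\R^3)$ via the trace theorem, and approximate $\vb w-\vb w_{\mathrm{lift}}$ — which belongs to $\{z\in H_{\partial_3}(B;\R^3):\gamma_{S^\pm}(z)=0\}$ — by $C^\infty_c(B;\R^3)$ functions in the $H_{\partial_3}$ norm. This produces $(\vb w^k)_k\subset H^1(B;\R^3)$ with $\gamma_{S^\pm}(\vb w^k)=\gamma_S(\vo w^\pm)$, $\vb w^k\to\vb w$ in $L^2(B;\R^3)$, and $\partial_3\vb w^k\to\partial_3\vb w$ in $L^2(B;\R^3)$; since $\mathcal{J}$ depends on $\vb w$ only through these $L^2$-quantities, each term of $\mathcal{J}(w^k)$ converges to the corresponding term of $\mathcal{J}(w)$, where $w^k:=(\vo w,\vb w^k)\in\Spu$. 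Applying Step 1 to each $w^k$ yields sequences $(w_n^k)_n$ with $\widetilde{\mathcal{J}}_n(w_n^k)\to\mathcal{J}(w^k)$ term-by-term; a standard diagonal extraction $w_n:=w_n^{k(n)}$ gives the required convergence $\widetilde{\mathcal{J}}_n(w_n)\to\mathcal{J}(w)$. The condition $\varphi_n(w_n-\projn n^u(w),\cdot)\to 0$ is preserved because, by linearity of $\projn n^u$ and Proposition \ref{prop:4.2}(1) applied to $w-w^{k(n)}$, one has $\varphi_n(\projn n^u(w-w^{k(n)}),\projn n^u(w-w^{k(n)}))\le C|w-w^{k(n)}|^2$, which vanishes for a suitable diagonal choice.

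\textbf{Main obstacle.} The delicate point is Step 2: the trace compatibility $\gamma_{S^\pm}(\vb w^k)=\gamma_S(\vo w^\pm)$ must be preserved \emph{exactly} so that $w^k\in\Spu$ and Lemma \ref{lemma4.1}(3) can be invoked. The naive affine-in-$x_3$ interpolation of the boundary data need not belong to $H^1(B;\R^3)$, since the horizontal derivatives of $\gamma_S(\vo w^\pm)$ live only in $H^{-1/2}(S)$; a genuine $H^1$-lift of the $H^{1/2}$ trace is therefore required. A secondary technical point is the dissipation analysis in Step 1 when $\bar b\in(0,\infty)$, where the split $\calD=\calD^{\infty,p}+(\calD-\calD^{\infty,p})$ demands careful bookkeeping of the powers of $\eps_n$ against the simultaneous rescaling of $e(\eps_n,\ubn)$.
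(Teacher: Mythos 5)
Your proposal follows the same overall scheme as the paper's proof: both take $w_n := \projn n^u(w)$ (which trivially makes the $\varphi_n$-discrepancy vanish), establish the term-by-term convergence of $\widetilde{\calJ}_n(w_n)$ first on the dense subset $(H_{\Gdir}^1(\Omega{\setminus}S;\R^3)\times H^1(B;\R^3))\cap\Spu$, and then conclude by a diagonal argument using the continuity of $\calJ$ on $\Spu$. The handling of the kinetic term via Hyp.\ \ref{hyp:params}(6) and Lemma \ref{lemma4.1}(3), and of the quadratic and cross terms via Prop.\ \ref{prop:4.2}(2) and Lemma \ref{lemma4.1}(1)--(2), matches the paper essentially line by line.

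The one place where your treatment genuinely departs from — and in fact improves on — the paper's is the dissipation term. The paper writes the identity
\[
b_n\int_{B_{\eps_n}}\calD(e(w_n))\,\dd x \;=\; \frac{b_n}{\eps_n^{p-1}}\int_B \calD\bigl(e(\eps_n,\sop{\eps_n}[w_n])\bigr)\,\dd x ,
\]
which is an exact equality only when $\calD$ is $p$-homogeneous (the correct change-of-variables formula is $b_n\int_{B_{\eps_n}}\calD(e(w_n))\,\dd x = b_n\eps_n\int_B\calD\bigl(\tfrac1{\eps_n}e(\eps_n,\sop{\eps_n}[w_n])\bigr)\,\dd x$, exactly as you wrote). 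Under the actual hypotheses \eqref{dissipation} and \eqref{dissipation-limit-c} — mere $p$-growth plus a bound on the recession residual — the paper's equality is an implicit shortcut. Your split $\calD = \calD^{\infty,p} + (\calD - \calD^{\infty,p})$, extracting $\tfrac{b_n}{\eps_n^{p-1}}\to\bar b$ from the $p$-homogeneous piece and killing the residual with Hyp.\ \ref{hyp:dissipation} and $b_n\eps_n\to 0$, makes the passage rigorous. Note that the residual estimate uses $\bar b<\infty\Rightarrow b_n=O(\eps_n^{p-1})$, so the factor $b_n/\eps_n^{\theta-1}=O(\eps_n^{p-\theta})\to 0$; it is worth writing this power-counting explicitly. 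Your explicit $H^1$-lifting of the traces and $H_{\partial_3}$-approximation (in place of the paper's unstated ``standard diagonalization'') is also a genuine elaboration, and your observation that the naive affine interpolant need not be $H^1$ correctly identifies why a bona-fide trace lifting is required. The only point to watch is the subcase $\bar b=\infty$ with $\partial_3\vb w=0$: the claim that ``the $p$-growth bound suffices'' needs more than the strong $L^2$-convergence $e(\eps_n,w_{B,n})\to 0$, because $\tfrac{b_n}{\eps_n^{p-1}}\to\infty$ must be balanced against the rate of that decay; the paper does not address this either, but it deserves a remark.
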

\begin{proof}
Since $\calJ$ is continuous on $\Spu$, it is sufficient to prove the result on a dense subset of $\Spu$, namely  the set
$(H_{\Gdir}^1(\Omega{\setminus}S;\R^3) {\times} H^1(B;\R^3)) \cap \Spu$, and to conclude via a diagonalization argument.
 Then, we set
\[
w_n : = \projn n^u(w)\,.
\]
Now, it follows from Prop.\ \ref{prop:4.2}(2) that $\varphi_n(w_n,w_n)\to \varphi(w,w)$ as $n\to\infty$.
The convergence $ k_n(w_n,w_n)\to  k(w,w) $ stems from
the definition
 \eqref{projn-u}   of $\projn n^u$
 and Lemma \ref{lemma4.1}.
 Indeed,
\[
k_n(w_n,w_n) = \int_{\Omega_{\eps_n}}\rho^* |w_n|^2 \dd x + \int_{B_{\eps_n}} \rho_n |w_n|^2 \dd x\,.
\]
On the one hand, again taking into account that
$\Omega_{\eps_n} = \Omega_{\eps_0} \cup (B_{\eps_0}^+{\setminus}B_{\eps_n}^+) \cup (B_{\eps_0}^-{\setminus}B_{\eps_n}^-) $ and
recalling  \eqref{projn-u}  (cf.\ the arguments for \eqref{v-integrals-3}), we see that
 $ \int_{\Omega_{\eps_n}}\rho^* |w_n|^2 \dd x \to \int_{\Omega}\rho^* |\vo{w}|^2 \dd x $.
 On the other hand,
\[
\int_{B_{\eps_n}} \rho_n |w_n|^2 \dd x = \int_{B_{\eps_n}} \rho_n |\sop{\eps_n}^{-1}[w_{B,n}]|^2 \dd x = \rho_n\eps_n \int_{B} |w_{B,n}|^2 \dd x \to \bar\rho\int_{B}|\vb{w}|^2 \dd x
\]
due to Hyp.\ \ref{hyp:params}(6) and  Lemma \ref{lemma4.1}(3), using that  $\vb w $ belongs to $H^1(B;\R^3)$.
  Analogously, one can pass to the limit in the fifth contribution to
 $\widetilde{\mathcal{J}}_n(w_n)$.
  As for the third term, we have that
 \[
   b_n \int_{B_{\eps_n}} \mathcal{D}(e(w_n)) \dd x
   = \frac{b_n}{\eps_n^{p-1}} \int_{B} \mathcal{D} (e(\eps_n,\sop{\eps_n}[w_n])) \dd x \to \int_B  \overline{\calD}(\partial_3 \vb{w}{\symprod}e_3) \dd x,
   \]
  with $\overline{\calD} $ given by \eqref{dissipation-limit}, taking into account that $e(\eps_n,\sop{\eps_n}[w_n]) \to \partial_3 \vb{w}{\symprod}e_3$ strongly in
  $L^2(B;\Sym)$ by Proposition \ref{prop4.3}(3).
 Finally, to deal with the fourth term we use that
 \[
 \begin{aligned}
 4  \varphi_n(\projn n^u (\psi_1),\projn n^u(w))  & =  \varphi_n(\projn n^u (\psi_1{+}w),\projn n^u(\psi_1{+}w)) -    \varphi_n(\projn n^u (\psi_1{-}w),\projn n^u(\psi_1{-}w))
 \\
 & \longrightarrow \varphi(\psi_1{+}w,\psi_1{+}w) -    \varphi(\psi_1{-}w,\psi_1{-}w) = 4 \varphi(\psi_1,w)\,,
\end{aligned}
 \]
 which concludes the proof.
\EEE
\end{proof}
\paragraph{\underline{Second step}:} We now show that
\begin{lemma}
\label{lemma:4.3}
Let $\bar{v}_n$ be the (unique) minimizer of   the functional $\widetilde{\mathcal{J}}_n$
from \eqref{tilde-Jn} \EEE
on
 $H_{\Gdir}^1(\Omega;\R^3)$.
Then, there exists $\bar v = (\vo{\bar v}, \vb{\bar v}) \in \Spu$ such that
\begin{enumerate}
\item the elements $\widetilde{\mathsf{V}}_n :  = (\bar{v}_n,\bar{v}_n)$ converge to $\widetilde{\mathsf{V}}: = (\bar v,\bar v)$  in the sense of Trotter;
\item every term of $\calJ(\bar v)$ is estimated from above by the $\liminf_{n\to\infty}$ of the corresponding term of $
\widetilde{\mathcal{J}}_n(\bar{v}_n)$;
\item $\bar v $ is the unique minimizer of $\calJ$ on $\Spu$;
\item
$\calJ(\bar v) = \lim_{n\to\infty} \widetilde{\calJ}_n(\bar{v}_n)$ and $|\widetilde{\mathsf{V}}| = \lim_{n\to\infty} |\widetilde{\mathsf{V}}_n |_n$.
\end{enumerate}
\end{lemma}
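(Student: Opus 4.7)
The argument will follow a $\Gamma$-convergence scheme anchored on the recovery construction of Lemma \ref{lemma:4.2}. First I would derive uniform a priori bounds for $\bar v_n$: testing minimality against the zero recovery sequence $\projn{n}^u(0)=0$ gives $\widetilde{\calJ}_n(\bar v_n)\leq \widetilde{\calJ}_n(0)\leq b_n|B_{\eps_n}|\beta'\to 0$ by \eqref{dissipation} and Hyp.\ \ref{hyp:params}(3); absorbing the linear terms in $\widetilde{\calJ}_n$ via Cauchy--Schwarz and Proposition \ref{prop:4.2}(1) then yields
\[
\varphi_n(\bar v_n,\bar v_n) + k_n(\bar v_n,\bar v_n) + b_n\int_{B_{\eps_n}}\calD(e(\bar v_n))\dd x \leq C.
\]
Replaying the extraction scheme behind convergences (1)--(7) in Subsection \ref{ss:4.1} (Korn's inequality, the key estimate \eqref{e:4.1}, and Hyp.\ \ref{hyp:params}(2),(5),(6)), I would then produce, along a not relabelled subsequence, a candidate limit $\bar v=(\vo{\bar v},\vb{\bar v})\in\Spu$ with $\chi_{\Omega_{\eps_n}}\bar v_n\rightharpoonup\vo{\bar v}$, $\chi_{\Omega_{\eps_n}}e(\bar v_n)\rightharpoonup e(\vo{\bar v})$, $\sop{\eps_n}[\chi_{B_{\eps_n}}\bar v_n]\rightharpoonup\vb{\bar v}$ and $e(\eps_n,\sop{\eps_n}[\bar v_n])\rightharpoonup\partial_3\vb{\bar v}\symprod e_3$ in the appropriate $L^2$ spaces, with the trace compatibility $\gamma_S(\vo{\bar v}^\pm)=\gamma_{S^\pm}(\vb{\bar v})$ ensuring $\bar v\in\Spu$.

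Next I would prove claim (2) term by term. The bulk strain and kinetic contributions over $\Omega_{\eps_n}$ are weakly lower semicontinuous in $L^2$; the strain contribution over $B_{\eps_n}$ rewrites as $\int_B\bigl(\tfrac{\lambda_n}{\eps_n}|\mathrm{tr}(\cdot)|^2+\tfrac{2\mu_n}{\eps_n}|\cdot|^2\bigr)$ evaluated on $e(\eps_n,\sop{\eps_n}[\bar v_n])$, so Hyp.\ \ref{hyp:params}(2) combined with weak lower semicontinuity of nonnegative quadratic forms delivers the bound by $\int_B\rmD W_{\bar\lambda,\bar\mu}(\partial_3\vb{\bar v}\symprod e_3)\cdot(\partial_3\vb{\bar v}\symprod e_3)\dd x$; the kinetic contribution over $B_{\eps_n}$ is handled analogously using $\rho_n\eps_n\to\bar\rho$ from Hyp.\ \ref{hyp:params}(6). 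The \emph{main obstacle} is the dissipation term $\tfrac{b_n}{\eps_n^{p-1}}\int_B\calD(e(\eps_n,\sop{\eps_n}[\bar v_n]))\dd x$: I would use Hyp.\ \ref{hyp:dissipation} to replace $\calD$ by $\calD^{\infty,p}$ up to a remainder of subcritical growth $\theta<p$, which is negligible thanks to the uniform $L^p$-bound on $e(\eps_n,\sop{\eps_n}[\bar v_n])$ and $b_n/\eps_n^{p-1}\to\bar b$, and then invoke convexity and positive $p$-homogeneity of $\calD^{\infty,p}$ to obtain weak lower semicontinuity in $L^2(B;\Sym)$. In the degenerate case $\bar b=\infty$ the uniform bound forces $\partial_3\vb{\bar v}\symprod e_3=0$, recovering the indicator constraint encoded in $\overline{\calD}$. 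The linear terms converge exactly by Proposition \ref{prop:4.2} and the definitions of $\projn{n}^u$, $\projn{n}^v$.

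Claim (3) will then follow by combining (2) with the upper bound $\limsup_n\widetilde{\calJ}_n(\bar v_n)\leq\limsup_n\widetilde{\calJ}_n(\projn{n}^u(w))=\calJ(w)$ from Lemma \ref{lemma:4.2}, valid for every $w\in\Spu$, together with the strict convexity of $\calJ$ (positive definiteness of $\varphi$ via Korn, of $k$ since $\rho^*\geq\bar\rho_m$ and $\bar\rho>0$), which yields uniqueness of the minimizer and extends convergence to the whole sequence. Taking $w=\bar v$ in the same chain gives $\lim_n\widetilde{\calJ}_n(\bar v_n)=\calJ(\bar v)$, forcing each liminf in (2) to be an equality and proving (4). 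Finally, claim (1) will follow from (4) by a norm argument: $|\widetilde{\mathsf{V}}_n|_n^2=\varphi_n(\bar v_n,\bar v_n)+k_n(\bar v_n,\bar v_n)\to|\widetilde{\mathsf{V}}|^2$ by (4), and $|\projn{n}(\widetilde{\mathsf{V}})|_n^2\to|\widetilde{\mathsf{V}}|^2$ by Proposition \ref{prop:4.2}(2), so that expanding
\[
|\projn{n}(\widetilde{\mathsf{V}})-\widetilde{\mathsf{V}}_n|_n^2=|\projn{n}(\widetilde{\mathsf{V}})|_n^2+|\widetilde{\mathsf{V}}_n|_n^2-2\bigl(\projn{n}(\widetilde{\mathsf{V}}),\widetilde{\mathsf{V}}_n\bigr)_n
\]
reduces the problem to showing $\varphi_n(\projn{n}^u(\bar v),\bar v_n)+k_n(\projn{n}^v(\bar v),\bar v_n)\to|\widetilde{\mathsf{V}}|^2$, which I would obtain by polarization from the norm convergences of both factors, combining the weak limits of $\bar v_n$ with the strong convergences of $\projn{n}^u(\bar v),\projn{n}^v(\bar v)$ furnished by Lemma \ref{lemma4.1} and the definitions \eqref{projn-u}--\eqref{projn-v}.
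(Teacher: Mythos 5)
Your proposal is correct and follows the paper's proof step by step: coercivity from $\widetilde{\calJ}_n(\bar v_n)\leq\widetilde{\calJ}_n(0)$, subsequence extraction into $\Spu$, termwise lower semicontinuity for claim (2), comparison with the recovery sequences of Lemma \ref{lemma:4.2} for claim (3), the pinching argument for claim (4), and the norm expansion reducing claim (1) to $(\projn n\widetilde{\mathsf{V}},\widetilde{\mathsf{V}}_n)_n\to|\widetilde{\mathsf{V}}|^2$, handled by strong--weak pairings. Your explicit treatment of the dissipation term via Hypothesis \ref{hyp:dissipation} fills in a step the paper only alludes to, and the one loose phrase ("by polarization from the norm convergences") is immediately superseded by the correct mechanism you state next — pairing the weak limits of $\bar v_n$ with the strong convergences of $\projn n^u(\bar v),\projn n^v(\bar v)$ from Lemma \ref{lemma4.1} — which is exactly the paper's argument.
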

\begin{proof}
Since
$\widetilde{\mathcal{J}}_n(\bar{v}_n) \leq \widetilde{\mathcal{J}}_n(0) =0 $, the sequence $(\widetilde{\mathsf{V}}_n )_n$ is bounded and there exists
 $\bar v = (\vo{\bar v}, \vb{\bar v})  \in \Spu$ such that,  at least \EEE
along a not relabeled subsequence, there holds 
\begin{itemize}
\item[(i)] the sequence $\chi_{\Omega_{\eps_n}}(\bar{v}_n, e(\bar{v}_n))$ converges weakly to $(\vo{\bar v}, e(\vo{\bar v}))$ in $L^2(\Omega;\R^3\times \Sym)$;
\item[(ii)] $(\sop{\eps_n}[\bar{v}_n])_3, e(\eps_n, \sop{\eps_n}[\bar{v}_n] )_n$  converges to $(\bar{v}_{B3}, \partial_3 \vb{\bar v}{\symprod} e_3)$  weakly  in $H_{\partial_3}(B) \times L^2(B;\Sym)$;
\item[(iii)]
$\chi_{\Omega_{\eps_n}} \bar{v}_n$ converge to $\vo{\bar v}$ strongly in $L^2(\Omega;\R^3)$;
\item[(iv)] $\sop{\eps_n}[\bar{v}_n]$ converge to $\vb{\bar v}$ weakly in $L^2(B;\R^3)$.
\end{itemize}
Let us only comment on the proof of (iii): from $\int_{\Omega_{\eps_n}} (|e(\bar{v}_n)|^2 +|\bar{v}_n|^2) \dd x \leq C$ and Korn's inequality in
$\Omega_{\eps_0/3}$ we deduce that
the sequence $(\bar{v}_n^{\mathrm{ext}})_n$ defined by $\bar{v}_n^{\mathrm{ext}}(x): = (1-\xi(x_3)) \bar{v}_n(x)$ for all $x\in \Omega$, with $\xi$ the function from \eqref{cut-off-xi}, is bounded in $H^1(\Omega;\R^3)$. 
Hence, 
$\bar{v}_n^{\mathrm{ext}}\to \bar{v}^{\mathrm{ext}}$, with $ \bar{v}^{\mathrm{ext}}(x): =(1-\xi(x_3)) \vo{\bar v}(x) $  strongly in $L^2(\Omega;\R^3)$. Likewise, Korn's inequality in $\Omega^{\pm}$ guarantees that 
the sequence $(\bar{v}_n^*)_n$ with $\bar{v}_n^*(\hat x, x_3): = \xi(x_3 \pm \eps_n) \bar{v}_n (\hat x, x_3\pm \eps_n)$ for all $x\in \Omega^\pm$ is bounded in $H^1(\Omega^\pm;\R^3)$. Hence, $v_n^* \to v^*$  strongly 
 in $L^2(\Omega;\R^3)$, with $v^*(x): = \xi(x_3) \vo{\bar v}(x)$ for all $x\in \Omega$.
  Now, since $v_n^*$ and $\bar{v}_n^{\mathrm{int}}: = \bar{v}_n - \bar{v}_n^{\mathrm{ext}}$ weakly converge to the same limit  in $L^2(\Omega;\R^3)$,  a fortiori we conclude that  
  $ \bar{v}_n =  \bar{v}_n^{\mathrm{ext}}+\bar{v}_n^{\mathrm{int}} \to \vo{\bar v}$ in $L^2(\Omega;\R^3)$.
\par
We use the above convergences for the last two terms contributing to $\widetilde{\mathcal{J}}_n(\bar{v}_n)$,
and an additional classical lower semicontinuity argument for the first three terms
to conclude   Claim (2), at least along a subsequence. Then, from Step $1$ we infer that
$\bar v$ is the unique minimizer in $\Spu$ of the strictly convex functional $\calJ$,
 namely Claim (3). \EEE
 Therefore, the \emph{whole} sequence $(\bar{v}_n)_n$ converge, and
 there holds \EEE
 $\calJ(\bar v ) =\lim_{n\to\infty}
\widetilde{\mathcal{J}}_n(\bar{v}_n) $. 	 In order to complete the proof of   Claim (4), it suffices to observe that
\[
\begin{aligned}
&\limsup_{n\to\infty} \left( \frac12 \varphi_n(\bar{v}_n,\bar{v}_n) {+} \frac12 k_n(\bar{v}_n,\bar{v}_n)   \right)
  \\ & \leq
\limsup_{n\to\infty}
 \left(
\widetilde{\mathcal{J}}_n(\bar{v}_n) {-}  b_n \int_{B_{\eps_n}} \mathcal{D}(e(\bar{v}_n)) \dd x{-} \varphi_n(\projn n^u (\psi_1),\bar{v}_n)
 {+}k_n(\projn n^v (\psi_2),\bar{v}_n) \right)
 \\
 & = \calJ(\bar v )  -\liminf_{n\to\infty}  b_n \int_{B_{\eps_n}} \mathcal{D}(e(w_n)) \dd x -\liminf_{n\to\infty}  \varphi_n(\projn n^u (\psi_1),\bar{v}_n) +
  \lim_{n\to\infty} k_n(\projn n^v (\psi_2),\bar{v}_n)
  \\
  &
  \leq  \calJ(\bar v )  -  \int_{B}\overline{\mathcal{D}} (\partial_3 \vb{\bar{v}}{\symprod} e_3) \dd x  -  \varphi(\psi_1,\bar{v}) + k(\psi_2,\bar{v})
  \\ & = \frac12 \varphi(\bar v,\bar v) +\frac12 k(\bar{v},\bar{v})    \leq \liminf_{n\to\infty} \left( \frac12 \varphi_n(\bar{v}_n,\bar{v}_n) {+} \frac12 k_n(\bar{v}_n,\bar{v}_n)   \right),
  \end{aligned}
\]
which gives the separate convergences
\[
\varphi_n(\bar{v}_n,\bar{v}_n)\to  \varphi(\bar v,\bar v), \qquad  k_n(\bar{v}_n,\bar{v}_n) \to  k(\bar{v},\bar{v})\,.
\]
Hence, we conclude that
\begin{equation}
\label{old-find}
|\widetilde{\mathsf{V}}| = \lim_{n\to\infty} |\widetilde{\mathsf{V}}_n |_n.
\end{equation}
\par
 Then, we use that
\[
|\projn n \widetilde{\mathsf{V}} {-} \widetilde{\mathsf{V}}_n|_n^2 = |\projn n \widetilde{\mathsf{V}} |_n^2 +|\widetilde{\mathsf{V}}_n|_n^2 - 2(\projn n \widetilde{\mathsf{V}},\widetilde{\mathsf{V}}_n)_n\,.
\]
Taking into account \eqref{old-find} and the fact that
 $\lim_{n\to\infty} |\projn n \widetilde{\mathsf{V}} |_n^2 = | \widetilde{\mathsf{V}}|^2$, in order to establish the Trotter convergence of $(\widetilde{\mathsf{V}}_n)_n$ to $ \widetilde{\mathsf{V}} $ it remains to check that  \EEE
\[
\lim_{n\to\infty} (\projn n \widetilde{\mathsf{V}}, \widetilde{\mathsf{V}}_n)_n = |\widetilde{\mathsf{V}}|^2.
\]
 This
 stems from  convergences (i)--(iv).
 In order to check this, we use that
 \[
 \varphi_n(\projn n^u(\bar v), \bar{v}_n) = \int_{\Omega_{\eps_n}} a e (\projn n^u(\bar v)) \cdot e( \bar{v}_n)  \dd x + \int_{B_{\eps_n} }
 \rmD W_{\lambda_n,\mu_n} (e(\vb{\bar v}^n)) \cdot   e( \bar{v}_n)  \dd x \,.
 \]
  	To take the limit in the first term,  we combine the facts that
 $\chi_{\Omega_{\eps_n}} e(\bar{v}_n) \weakto e(\vo{\bar v})$ weakly in $L^2(\Omega;\R_{\sym}^{3\times3})$
 and $e(\projn n^u(\bar v)) \to e (\vo{\bar v})$ strongly in
 $L^2(\Omega{\setminus}S;\R_{\sym}^{3\times3})$ (indeed, the weak convergence
 of $e(\projn n^u(\bar v)) $ to $e (\vo{\bar v})$ in
 $L^2(\Omega{\setminus}S;\R_{\sym}^{3\times3})$ improves to a strong one by the analogue of convergence \eqref{I3n-quoted-later}).
 Then, \EEE
 we find that
 \[ \int_{\Omega_{\eps_n}} a e (\projn n^u(\bar v)) \cdot e( \bar{v}_n)  \dd x  \to
 \int_{\Omega{\setminus}S} a e(\vo{\bar v})\cdot e(\vo{\bar v}) \dd x .
 \]
   Let us now show that 
\begin{equation}
\label{new-proof-DW}
 \int_{B_{\eps_n} }
 \rmD W_{\lambda_n,\mu_n} (e(\projn n^u(\bar v))) \cdot   e( \bar{v}_n)  \dd x  \to  \int_{B}
 \rmD W_{\bar\lambda,\bar\mu} (\partial_3 \vb{\bar v}{\symprod}e_3) \cdot   (\partial_3 \vb{\bar v}{\symprod}e_3)   \dd x\,.
\end{equation}
With this aim, we recall that $\projn n^u(\bar v) = \vb{\bar v}^n$ on $B_{\eps_n}$, with $\vb{\bar v}^n$ the solution to \eqref{NEW-elliptic-phin}. Then, it is sufficient to observe
\[
\begin{aligned}
&
 \int_{B_{\eps_n} } \rmD W_{\lambda_n,\mu_n} (e(\vb{\bar v}^n)) \cdot   e( \bar{v}_n)  \dd x  \\   &   = \frac1{\eps_n} \int_B
  \rmD W_{\lambda_n,\mu_n} (e(\eps_n,\sop{\eps_n}[\vb{\bar v}^n])) \cdot   e(\eps_n,\sop{\eps_n}[ \bar{v}_n])  \dd x
  \\ &
 {=} \frac1{\eps_n} \int_B \left(\lambda_n  \mathrm{tr}(e(\eps_n,\sop{\eps_n}[\vb{\bar v}^n]))  \mathrm{tr}(e(\eps_n,\sop{\eps_n}[{\bar v}_n]))
  +2\mu_n e(\eps_n,\sop{\eps_n}[\vb{\bar v}^n]) \cdot e(\eps_n,\sop{\eps_n}[{\bar v}_n]) \right) \dd  x
  \\
  & \longrightarrow \int_B \left(\bar\lambda |\mathrm{tr}(\partial_3 \vb{\bar v}{\symprod}e_3)|^2{+}2\bar\mu |\partial_3 \vb{\bar v}{\symprod}e_3|^2 \right) \dd x = \int_{B}
 \rmD W_{\bar\lambda,\bar\mu} (\partial_3 \vb{\bar v}{\symprod}e_3) \cdot   (\partial_3 \vb{\bar v}{\symprod}e_3)   \dd x
  \end{aligned}
\]
where the above convergence follows from the fact that $e(\eps_n,\sop{\eps_n}[\vb{\bar v}^n]) \to \partial_3 \vb{\bar v}{\symprod}e_3$
and   $e(\eps_n,\sop{\eps_n}[{\bar v}_n]) \to  \partial_3 \vb{\bar v}{\symprod}e_3$
weakly in $L^2(B;\Sym)$
by Lemmas \ref{lemma4.1}(2) and  convergence (ii) at the beginning of the proof of Lemma \ref{lemma:4.3},  respectively.
  All in all, we conclude that
  $\varphi_n(\projn n^u(\bar v), \bar{v}_n) \to \varphi(\bar v,\bar v)$.
 Finally,  we observe that
 \[
 k_n(\projn n^v({\bar v}), \bar{v}_n) = \int_{\Omega_{\eps_n}} \rho^* \vo{\bar v} \bar{v}_n \dd x + \int_{B_{\eps_n}} \rho_n \sop{\eps_n}^{-1}[\vb{\bar v}] \bar{v}_n \dd x  = \int_\Omega\chi_{\Omega_{\eps_n}} \rho^* \vo{\bar{v}} \bar{v}_n \dd x + \frac{\rho_n}{\eps_n} \int_B \vb{\bar v} \sop{\eps_n}[\bar{v}_n] \dd x\,
 \]
 Then,  convergences (i)  and (iv) stated at the beginning of   the proof 
 yield that $ k_n(\projn n^v(\bar v), \bar{v}_n)\to k(\bar v,\bar v)$.
  We have thus established the Trotter convergence of $(\widetilde{\mathsf{V}}_n)_n$ to $ \widetilde{\mathsf{V}} $. 
\end{proof}
 \paragraph{\underline{Third step}:}
   We will show that
 \begin{equation}
 \label{desired-claim}
 \lim_{n\to\infty} |\projn n \widetilde{\abv} - \widetilde{\abv}_n|_n=0 \qquad \text{with } \begin{cases}
 \widetilde{\abv}_n  =  (I+\abop_n)^{-1} (\projn n (\Psi))   
 \\
  \widetilde{\abv} = (I+\abop)^{-1} (\Psi)  
 \end{cases}
 \end{equation}
 by exploiting the characterizations of the resolvents of $\abop_n$ and $\abop$ from
 \eqref{def-resolv} and \eqref{def-resolv-lim}, respectively. Indeed,
 it follows from  \eqref{def-resolv}  that
\[
 \widetilde{\abv}_n =  (I+\abop_n)^{-1} (\projn n (\psi_1,\psi_2))  = \widetilde{\mathsf{V}}_n  +(\projn n^u(\psi_1), 0),
\]
with $\widetilde{\mathsf{V}}_n = ({\bar v}_, {\bar v}_n)$ the unique minimizer  for the functional   $\widetilde{\mathcal{J}}_n$
from \eqref{tilde-Jn}. Analogously, one has that
 \[
 \widetilde{\abv} = (I+\abop)^{-1} (\psi_1,\psi_2) = \widetilde{\mathsf{V}} + (\psi_1,0)
 \]
 with $ \widetilde{\mathsf{V}} =(\bar v,\bar v)$ and $\bar v$ the unique minimizer of the functional $\calJ$ from \eqref{energy-J-eps-new}.
 Then,
 \[
 \begin{aligned}
 |\projn n \widetilde{\abv} - \widetilde{\abv}_n|_n  & = \left| \projn  n\left (\widetilde{\mathsf{V}} + (\psi_1,0)\right) -  \widetilde{\mathsf{V}}_n -  (\projn n^u(\psi_1), 0) \right|_n\\ & = \left| \projn n (\widetilde{\mathsf{V}}) + (\projn n^u(\psi_1), 0)  - \widetilde{\mathsf{V}}_n -  (\projn n^u(\psi_1), 0) \right|_n =
  \left| \projn n (\widetilde{\mathsf{V}}) - \widetilde{\mathsf{V}}_n \right|_n\to 0
  \end{aligned}
 \]
 as $n\to\infty$ due to Lemma \ref{lemma:4.3}(1).
 Hence, \eqref{desired-claim}
follows. \EEE
This concludes the proof of Proposition \ref{prop:4.4}. \QED
\par
Let us now carry out the \underline{\bf proof of Proposition \ref{prop:4.5}}:   As
\[
\frac{f}{\gamma_n} = \chi_{\Omega_{\eps_n}} \frac f{\rho^*} +(1{-}\chi_{\Omega_{\eps_n}} )\frac f{\rho_n}
\]
and
\[
\projn n^v \left(\frac{f}{\rho^*},0\right)  =  \chi_{\Omega_{\eps_n}} \frac f{\rho^*},
\]
we have that \EEE
\[
 \left| {\projn n^v \left(\frac{f}{\rho^*},0\right) \EEE} -  \left(\frac{f}{\gamma_n},0\right)  \right|_n^2 = \int_{B_{\eps_n}}  \frac1{\rho_n}|f|^2 \dd x,
\] the first claim is  a
consequence of the dominated convergence theorem. As remarked in \cite{LLOO}, condition \eqref{H4} implies that $g\mapsto u_n^{\mathrm{e}}$ is a linear mapping with
\[
\varphi_n(u_n^{\mathrm{e}}(t),u_n^{\mathrm{e}}(t)) \leq C|g(t)|_{L^2(\Gneu;\R^3)} \quad \text{for all } t \in [0,T].
\]
Since $u^{\mathrm{e}}(t)$ and $u_n^{\mathrm{e}}(t)$  are minimizers of $\frac12 \varphi(\cdot,\cdot) - L(t)(\cdot)$ and $\frac12 \varphi_n(\cdot,\cdot) - L(t)(\cdot)$, respectively,   \eqref{ass-f&g} and \EEE
the arguments from the proof of  Prop.\ \ref{prop:4.4}  give that
\[
\lim_{n\to\infty}|\projn n \abv^{\mathrm{e}}(t) {-} \abv_n^{\mathrm{e}}(t) |_n = \lim_{n\to\infty} \left | \projn n\left( \frac{\dd \abv^{\mathrm{e}}}{\dd t} (t)\right) {-}  \frac{\dd \abv_n^{\mathrm{e}}}{\dd t}(t) \right |_n =0 \quad \foraa\, t \in (0,T).
\]
But \eqref{ass-f&g}  implies that  the sequence $(\projn n \abv^{\mathrm{e}} {-} \abv_n^{\mathrm{e}})_n$  is bounded \EEE $W^{2,\infty} (0,T)$. Then, we have sufficient compactness to   establish the uniform convergence in \EEE Claim (2) of Prop.\  \ref{prop:4.5}. This concludes its proof. \QED
\par
We conclude
this section by specifying the additional assumption on the initial data under which we will be able to state our convergence result.
\begin{hypothesis}
\label{hyp:further-data}
 We assume that
\begin{equation}
\label{ass-init-data-final}
\exists\, \abv^0 \in \abv^{\mathrm{e}}(0) + \mathrm{D}( \abop); \ \ \abv_n^0 \in \abv_n^{\mathrm{e}}(0) +\mathrm{D}( \abop_n) \text{ and } \lim_{n\to\infty} |\projn n (\abv^0) -\abv_n^0|_n=0\,.
\end{equation}
\end{hypothesis}
Observe that
the first condition imposes a sort of compatibility between the initial
state and the initial loading conditions. The second requirement is a convergence condition that, because of Proposition \ref{prop:4.4}, is for instance
 satisfied by 
\[
 \abv_n^0 =  \abv_n^{\mathrm{e}}(0) + (\mathsf{I}{+}\abop_n)^{-1}\projn n (\mathsf{I}{+}\abop)^{-1} (\abv^0{-}\abv^{\mathrm{e}}(0)  ).
\]
\par
We are now in a position to apply the nonlinear Trotter-type Thm.\ \ref{thm:Trotter}  to investigate the asymptotic behavior of the solutions
$(\abv_n^{\mathrm{r}})_n \subset W^{1,\infty} (0,T;\absp_n)$ to the Cauchy problems
\begin{equation}
\label{abstract-problem-rn}
\begin{cases}
\frac{\dd }{\dd t}\abv(t) + \abop_n \abv(t)  \ni  \abfo_n(t) \qquad \text{ in } \absp_n \ \  \foraa\, t \in (0,T),
\\
\abv(0) = \abv_{0}^n -  \abv^{\mathrm{e}}(0),
\end{cases}
\end{equation}
in the Hilbert spaces $(\absp_n,|\cdot|_n)$  from \eqref{Hilbert-spaces-n}, with the operators $(\abop_n)_n$ from  \eqref{opAn} \EEE and the data $\abfo_n$ from \eqref{data-n}.
Therefore, we deduce that the sequence $(\abv_n^{\mathrm{r}})_n$ converges uniformly, in the sense of Trotter, to the  solution  $\abv^{\mathrm{r}}$
to  the Cauchy problem \eqref{lim-abseq}, with the initial datum $\abv_0^{\mathrm{r}} = \abv_0 - \abv^{\mathrm{e}}(0)$.
 This is summarized in the following theorem, which is the \underline{\textbf{main result of the paper}}.
\begin{theorem}
\label{thm:main}
Assume  Hypotheses \ref{hyp:params}, \ref{hyp:dissipation}, \ref{hyp-data},   and \ref{hyp:further-data}. \EEE
Then, the solutions $(\abv_n)_n$ to  the Cauchy problems
\begin{equation}
\label{Cauchy-n}
\begin{cases}
\frac{\dd}{\dd t} \abv_n + \abop_n(\abv_n {-} \abv_n^{\mathrm{e}}) \ni  \left(0,\frac{f}{\gamma_n} \right) \EEE \quad \aein\, (0,T),
\\
\abv_n(0) = \abv_n^0
\end{cases}
\end{equation}
converge to the solution $\abv$ of the Cauchy problem
\begin{equation}
\label{Cauchy-infty}
\begin{cases}
\frac{\dd}{\dd t} \abv + \abop(\abv {-} \abv^{\mathrm{e}}) \ni (0,f/\bar{\rho}) \quad \aein\, (0,T),
\\
\abv(0) = \abv^0,
\end{cases}
\end{equation}
in the sense that
\begin{equation}
\label{convergence-projections}
\lim_{n\to\infty} |\projn n (\abv(t)) {-} \abv_n(t)|_n =0 \qquad \text{ uniformly in $[0,T]$}
\end{equation}
 and, in addition, there holds
 \begin{equation}
\label{further-convergence}
\lim_{n\to\infty}  |\abv_n(t)|_n = |\abv(t)| \qquad \text{ uniformly in $[0,T]$.}
\end{equation}
\end{theorem}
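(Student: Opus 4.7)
}

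The plan is to reduce \eqref{Cauchy-n} and \eqref{Cauchy-infty} to the evolutionary part and invoke the Trotter-type Theorem \ref{thm:Trotter}, and then to patch in the stationary part. Setting $\abv_n^{\mathrm{r}} := \abv_n - \abv_n^{\mathrm{e}}$ and $\abv^{\mathrm{r}} := \abv - \abv^{\mathrm{e}}$, the Cauchy problems \eqref{Cauchy-n}, \eqref{Cauchy-infty} become exactly \eqref{abstract-problem-rn} and \eqref{lim-abseq}, with forcing terms $\abfo_n = (-\partial_t u_n^{\mathrm{e}}, f/\gamma_n)$ and $\abfo = (-\partial_t \ue, (f/\rho^*, 0))$ as in \eqref{data-n} and \eqref{417}, and initial data $\abv^{0,\mathrm{r}}_n := \abv_n^0 - \abv_n^{\mathrm{e}}(0) \in \mathrm{D}(\abop_n)$, $\abv^{0,\mathrm{r}} := \abv^0 - \abv^{\mathrm{e}}(0) \in \mathrm{D}(\abop)$ (cf.\ Hypothesis \ref{hyp:further-data}).

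I would then verify the hypotheses of Theorem \ref{thm:Trotter} for these evolutionary problems. Properties \eqref{props-Pn} of the projections $\projn n$ are exactly Proposition \ref{prop:4.2}. The resolvent convergence \eqref{trotter-hyp} is Proposition \ref{prop:4.4}. The forcing-term condition in \eqref{data-converg} is Proposition \ref{prop:4.5}(1). For the initial data in \eqref{data-converg}, using the linearity of $\projn n$ I write
\begin{equation*}
\projn n(\abv^{0,\mathrm{r}}) - \abv_n^{0,\mathrm{r}}
= \bigl(\projn n(\abv^0) - \abv_n^0\bigr) - \bigl(\projn n(\abv^{\mathrm{e}}(0)) - \abv_n^{\mathrm{e}}(0)\bigr),
\end{equation*}
and the two summands go to zero in $|\cdot|_n$ by Hypothesis \ref{hyp:further-data} and Proposition \ref{prop:4.5}(2) (evaluated at $t=0$), respectively. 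Theorem \ref{thm:Trotter} then yields $\sup_{t\in[0,T]} |\projn n (\abv^{\mathrm{r}}(t)) - \abv_n^{\mathrm{r}}(t)|_n \to 0$. Combining this with Proposition \ref{prop:4.5}(2) and the triangle inequality, and exploiting once more the linearity of $\projn n$ through
\begin{equation*}
\projn n \abv(t) - \abv_n(t) = \bigl[\projn n \abv^{\mathrm{r}}(t) - \abv_n^{\mathrm{r}}(t)\bigr] + \bigl[\projn n \abv^{\mathrm{e}}(t) - \abv_n^{\mathrm{e}}(t)\bigr],
\end{equation*}
gives the uniform Trotter convergence \eqref{convergence-projections}.

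The remaining, and arguably most delicate, point is to upgrade \eqref{convergence-projections} to the norm statement \eqref{further-convergence}. I bound
\begin{equation*}
\bigl||\abv_n(t)|_n - |\abv(t)|\bigr| \le |\abv_n(t) - \projn n \abv(t)|_n + \bigl||\projn n \abv(t)|_n - |\abv(t)|\bigr|,
\end{equation*}
where the first term converges to $0$ uniformly in $t$ by \eqref{convergence-projections}. For the second term, pointwise-in-$t$ convergence is provided by Proposition \ref{prop:4.2}(2), and the upgrade to uniform-in-$t$ convergence will rely on equicontinuity: since $\abv \in W^{1,\infty}(0,T;\absp)$ (solution of the Cauchy problem in $\absp$), Proposition \ref{prop:4.2}(1) gives
\begin{equation*}
\bigl||\projn n \abv(t)|_n - |\projn n \abv(s)|_n\bigr| \le |\projn n (\abv(t){-}\abv(s))|_n \le C|\abv(t){-}\abv(s)| \le C\|\partial_t \abv\|_{L^\infty}|t{-}s|,
\end{equation*}
so the functions $t\mapsto |\projn n \abv(t)|_n$ are uniformly bounded and equi-Lipschitz on $[0,T]$. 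Arzelà--Ascoli, together with uniqueness of the pointwise limit $t\mapsto |\abv(t)|$, then forces the \emph{entire} sequence to converge uniformly, which closes \eqref{further-convergence}. I expect this uniform norm convergence to be the main technical obstacle; all the heavy lifting—specifically the resolvent identification in Proposition \ref{prop:4.4} via the minimization characterization \eqref{def-resolv-lim}—has been absorbed into the lemmas preceding the theorem.
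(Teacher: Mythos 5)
Your proof follows the paper's own strategy essentially verbatim: reduce to the evolutionary parts $\abv_n^{\mathrm{r}}$, verify the hypotheses of Theorem \ref{thm:Trotter} via Propositions \ref{prop:4.2}, \ref{prop:4.4}, \ref{prop:4.5} together with Hypothesis \ref{hyp:further-data}, and then combine with Proposition \ref{prop:4.5}(2) to recover the full convergence of $\abv_n$ and upgrade the pointwise norm convergence to uniform convergence by equicontinuity. The only deviation worth noting is in the equicontinuity step for \eqref{further-convergence}: the paper invokes $W^{1,\infty}(0,T)$-boundedness of $t\mapsto |\abv_n(t)|_n - |\abv(t)|$ (which tacitly requires a uniform-in-$n$ Lipschitz bound on the $\abv_n$), whereas you establish equi-Lipschitz continuity of $t\mapsto |\projn n\abv(t)|_n$ using only Proposition \ref{prop:4.2}(1) and the $W^{1,\infty}$-regularity of the single limit $\abv$ — a slightly lighter hypothesis that sidesteps any uniform bound on $\partial_t\abv_n$.
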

\noindent
While convergence \eqref{convergence-projections} is guaranteed by Thm.\ \ref{thm:Trotter}, cf.\ \eqref{trotter-thesis}, the additional \eqref{further-convergence} follows from observing that the sequence $( |\abv_n(\cdot)|_n {-} |\abv(\cdot)| )_n$ is bounded in $W^{1,\infty}(0,T)$, and therefore equicontinuous. This turns the pointwise convergence to $0$ into uniform convergence on $[0,T]$.
\par
Finally,  let us highlight that,  in view of Proposition \ref{prop:4.5}(2), from Thm.\ \ref{thm:main}  we  also infer the uniform convergence of the sequence $\abv_n = \abv_n^{\mathrm{r}}+\abv_n^{\mathrm{e}}$. \EEE
\section{Conclusive results and remarks}
\label{s:5}
\paragraph{\bf The variational formulation corresponding to  \eqref{Cauchy-infty}.}
 In this final section \EEE we
gain further insight into the variational formulation of the
initial-boundary value problem encompassed in the Cauchy problem \eqref{Cauchy-infty}.
We will distinguish the cases $\bar b<\infty$ and $\bar b =\infty$.
\par
When \underline{$\bar b$ is finite}, a more explicit way of writing \eqref{Cauchy-infty}
 is \
 \begin{equation}
 \label{5.1}
 \begin{aligned}
 &
 \exists\, \xi \in \partial \overline{\calD} (\partial_3 \vb{v}{\symprod} e_3) \text{ such that }
 \\
 & \quad \int_{\Omega} \rho^* \partial_{tt} \vo{u}  \cdot \vo{\psi} \dd x + \int_{\Omega{\setminus}S} a e(\vo{u}) \cdot e(\vo{\psi}) \dd x + \int_B \bar\rho
\partial_{tt}  \vb{u} \cdot \vb{\psi} \dd x + \int_B \left( \mathrm{D} W_{\bar\lambda,\bar\mu} (\partial_3 \vb{u} {\symprod} e_3){+}\xi\right) \cdot (\partial_3\vb{\psi} {\symprod}e_3) \dd x
  \\
  &\quad  =  \int_{\Omega} f \cdot \vo{\psi} \dd x +\int_{\Gneu} g \cdot \vo{\psi} \dd \mathcal{H}^2(x)
  \\
  & \qquad \qquad \text{for all } \psi \in \{ (\vo{\psi},\vb{\psi})\in H_{\Gdir}^1(\Omega{\setminus}S;\R^3){\times}H_{\partial_3}(B;\R^3)\, : \ \gamma_{S^\pm}(\vb{\psi}) =
   \gamma_S(\vo{\psi}^{\pm})\}, \qquad \text{in } (0,T),
 \end{aligned}
 \end{equation}
 supplemented with suitable initial conditions.
 Hence, the limiting behavior may be described in terms of a \emph{coupled} system of two  evolutionary, or  \emph{transient}, \EEE
 problems set in $\Omega\setminus S$ \emph{and} in $B$. Clearly, the stress  and the displacement fields $\vo{\sigma}$ and $\vo{u}$ in the limiting adhering bodies that occupy $\Omega^+$ and $\Omega^-$ satisfy the following relations,
 written in strong form 
 \begin{equation}
 \label{5.2}
 \begin{aligned}
 &
 \rho^* \EEE \partial_{tt} \vo{u} -  \mathrm{div}\vo{\sigma} =  f 
&& \text{in } (\Omega{\setminus} S)\times (0,T),
 \\
  &
 \vo{\sigma} = ae(\vo u) && \text{in } (\Omega{\setminus} S)\times (0,T),
 \\
 &
\vo u =0 && \text{on } \Gdir\times (0,T),
\\
&
\vo{\sigma} n = g && \text{on } \Gneu\times (0,T),
\\
&
 \mp( \vo{\sigma}^{\pm}e_3)(\hat x, t)= \frac12 \int_{-1}^1  \Big[   \bar\rho (1{\pm} x_3)  \partial_{tt} \vb{u}(\hat x, x_3, t)
 \\
 & \hspace{3.9cm} {\pm} \left( \left( \left(
\rmD W_{\bar\lambda,\bar\mu}
 (\partial_3 \vb{u}(\hat x, x_3, t){\symprod}e_3) +\xi\right)e_3\right) \cdot e_3 \right) e_3 \Big]\dd x_3  && \text{on } S \times (0,T).
 \end{aligned}
 \end{equation}
 This corresponds to the transient response to the loading $(f,g) $ of each adhering body clamped on $\Gdir^{\pm}$ and linked through a mechanical constraint along $S$.
 Differently from the case of an adhesive layer with a vanishing total mass, which was considered in \cite{LLOO}, the contact between the bodies need not be described only in terms of the traces $\gamma_S(\vo{u}^\pm)$, $\gamma_S(\vo{v}^\pm)$ of the displacement and velocity of the sole adhering bodies.  In fact, one has to consider the additional  variables
 $(\vb u, \vb v = \partial_t \vb u)$ which keep track of the dynamics of the adhesive layer. These variables
 fulfill the following equations
  \begin{equation}
 \label{5.3}
 \begin{aligned}
  &
 \bar\rho \partial_{tt}  \vb{u}- \partial_3 (\vb{\sigma}e_3) =0 \EEE &&  \text{in } B\times (0,T),
 \\
 &
 \vb{\sigma} \in \rmD W_{\bar\lambda,\bar\mu}(\partial_3 \vb{u}{\symprod} e_3) +  \partial\overline{\calD}(\partial_3 \vb{u}{\symprod}e_3) \EEE && \text{in  } B\times (0,T),
 \\
 &
\gamma_{S^\pm} (\vb u) =\gamma_S(\vo{u}^\pm) && \text{on } S\times (0,T).
 \end{aligned}
 \end{equation}
  Systems \eqref{5.2} and \eqref{5.3} are \EEE supplemented by suitable initial conditions.
 Such equations are of the same type as those in the  original \EEE layer. Of course,
 the variables
 $(\vb u,\vb v)$ may be eliminated and, consequently (see \eqref{5.2}), the
 contact condition along $S$ between the two adhering bodies is a nonlocal - in time, only- relation between the stress vector
 $\vo{\sigma}^\pm(\hat x, t)) e_3$  \EEE at the current time $t$,
 and the history of $\gamma_S(\vo{u}^\pm(\hat x,\tau))$, with $\tau \in [0,t]$.
  Finally, from the last line of \eqref{5.2}  we deduce that
 \[
  (\vo{\sigma}^{-}e_3)(\hat x, t) - (\vo{\sigma}^{+}e_3)(\hat x, t) =  \int_{-1}^1    \bar\rho   \partial_{tt} \vb{u}(\hat x, x_3, t) \dd x_3,
 \]
 which reflects the fact that the jump of the stress vector on the adhering bodies balances the limiting inertial forces stemming from the adhesive. \EEE
 \par
 In the case \underline{$\bar b =\infty$}, the system reads
 \begin{equation}
 \label{5.1bis}
 \begin{aligned}
 &
\partial_3 \vb v=0 \text{ and  }
 \\
 & \int_{\Omega} \rho^* \partial_{tt} \vo{u} \psi \dd x + \int_{\Omega{\setminus}S} a e(\vo{u}) \cdot e(\psi) \dd x +
 2\bar\rho
 \int_S
  \partial_t \vo{v}\psi \dd x   =  \int_{\Omega} f \cdot \psi \dd x +\int_{\Gneu} g \cdot \psi \dd \mathcal{H}^2(x)
 \end{aligned}
 \end{equation}
 for all $\psi \in H_{\Gdir}^1(\Omega;\R^3)$, in $(0,T)$,
  again supplemented by suitable initial conditions.  Indeed, from $\partial_3 \vb v=0 $, supposing that the initial datum for $\vb u$ is independent of the variable $x_3$ we deduce that $\partial_3 \vb u=0$, and hence that $\JUMP{\vo u}=0$. Hence the space for the test functions in \eqref{5.1bis} is
  $H_{\Gdir}^1(\Omega;\R^3)$. Let us stress that if  {$\bar b =\infty$ then \EEE
the relative motion along $S$ is frozen.
 \paragraph{\bf Other  relative behavior \EEE of the parameters $(\lambda_n,\mu_n)$.}
 As previously mentioned, the analysis in Section \ref{s:4} has been carried out confining the discussion to the case in which the parameters $(\bar\lambda,\bar\mu) $ are in $[0,\infty)\times (0,\infty)$. Let us conclude the paper by
 examining the singular cases
 \begin{enumerate}
 \item $(\bar\lambda,\bar\mu)\in \{\infty\}\times (0,\infty)$;
 \item $(\bar\lambda,\bar\mu)\in [0,\infty) \times \{\infty\}$;
 \item $(\bar\lambda,\bar\mu)\in (0,\infty)\times \{0\}$;
 \item $\bar\lambda=\bar\mu =0$.
 \end{enumerate}
In each of these cases, we will explicitly illustrate spaces $\absp,\, \Spu,\, \Spv$, the bilinear forms $\varphi$ and $k$,
 the operator $\abop$, and the function $u^{\mathrm{e}}$, like we have done
for the case $(\bar\lambda,\bar\mu) \in [0,\infty) \times (0,\infty)$.  We will not  give the proof of the convergence result, as it is a straightforward adaptation
of that developed throughout Sec.\ \ref{ss:4.2}, and we will leave it to the interested reader.
 Nonetheless, we will hint at the main point underlying the identification of the limit problem,
namely the correct identification of the space $\Spu$ which, in turn, will be based
on the analysis of the asymptotic behavior of a sequence $(u_n)_n$ with
$\sup_n \varphi_n(u_n,u_n) <\infty$, cf.\ the arguments in Section \ref{ss:4.1}. \EEE
\par
Let us start by specifying that in each of the above cases
we will have
\[
\begin{aligned}
&
\absp = \Spu \times \Spv,
\\
& \Spv = L^2(\Omega;\R^3) \times L^2(B;\R^3),
\\
&
(\abv,\abv') =\varphi(u,u') + k(v,v') \quad \text{for all } \abv = (u,v), \, \abv'= (u',v') \in \absp,
\end{aligned}
\]
with
 the bilinear form $k$ given by \eqref{ip3},
 the form $\varphi$ specified  in \eqref{new-phi-1}, \eqref{new-phi-2}, \eqref{new-phi-3}, and \eqref{new-phi-4} below,
 the space $\Spu$ specified  in \eqref{Spu1}, \eqref{Spu2}, \eqref{Spu3}, \eqref{Spu4} below,
 the operator $\abop: \absp \rightrightarrows \absp$  with domain \eee
\begin{subequations}
\label{same-as-415}
\begin{align}
&
\label{domain-oper-final}
\begin{aligned}
\rmD(\abop) = \{ \abv \in \absp\, :  \  & \text{(i) } v \in \calP,
\\
& \text{(ii) } \exists\, (w,\xi) \in \Spv \times \partial\overline{\calD} (\partial_3 \vb{v} {\symprod} e_3) \text{ s.t. }
\\
& \qquad \qquad \qquad k(w,\psi) + \varphi(u,\psi) +\int_B \xi \cdot (\partial_3 \vb{\psi} {\symprod} e_3) \dd x =0 \quad \text{for all }   \psi \in \overline{\Spu}\}, \EEE
\end{aligned}
 \intertext{and defined by}
&
\label{same415b}
\abop \abv : = \left(\begin{array}{cc}
-v
\\
0
\end{array} \right) +  \left\{
 \left(\begin{array}{cc}
0
\\
-w
\end{array} \right)  \, : \  \text{$w$ as in  \eqref{domain-oper-final}(2)} \right\}. \EEE
\end{align}
\end{subequations}
 The space
$ \overline{\Spu}$
featuring in  \eqref{domain-oper-final}(2) will be specified for each of the
singular cases considered (cf.\
   \eqref{Spv1}, \eqref{Spv2}, \eqref{Spv3}, \eqref{Spv4} below);  analogously,
$\tilde v$ shall be  built from $v\in \calP$ \EEE in a way depending on the case under consideration (cf.\ \eqref{tildev1}, \eqref{tildev2}, \eqref{tildev3}, \eqref{tildev4} below),  with
the space
\[
\calP = \{ \psi =(\vo{\psi},\vb{\psi}) \in H^1(\Omega{\setminus}S;\R^3) \times H_{\partial_3} (B;\R^3) \, : \ \gamma_{S^\pm} (\vb \psi) = \gamma_S(\vo{\psi}^\pm) \}\,.
\]
 \paragraph{\bf Case $1$: $(\bar\lambda,\bar\mu)  \in \{\infty\} \times (0,\infty)$.} In this case we have (recall the notation $\JUMP{z} = \gamma_{S}(z^+) - \gamma_{S}(z^-)$)
 \begin{subequations}
 \label{case1}
 \begin{align}
 &
 \label{Spu1}
 \Spu = \{ u = (\vo u,\vb u) \in H_{\Gdir}^1(\Omega{\setminus}S;\R^3) \times H_{\partial_3} (B;\R^3)\, :  \ \JUMP{\vo u}_3=0,  \ \partial_3 u_{B3} =0 \},
 \\
 &
 \label{new-phi-1}
 \varphi(u,u') = \int_{\Omega{\setminus}S} a e (\vo u) \cdot e(\vo{u}') \dd x +\bar\mu \int_B \partial_3\vb{\widehat{u}} \cdot \partial_3 \vb{\widehat{u}}' \dd x,
 \\
  &
  \label{Spv1}
   \overline{\Spu} = \Spu, \EEE
  \\
  &
  \label{tildev1}
  \tilde v = v\,.
 \end{align}
 \end{subequations}
  Indeed, the condition $\frac{\lambda_n}{\eps_n} \to \infty$ implies that, along a sequence $(u_n)_n$ with
 $\sup_n \varphi_n(u_n,u_n) <\infty$, there holds $\mathrm{tr}(e(\eps_n,\sop{\eps_n}[u_n])) \to 0$ in $L^2(B)$, whence the condition  $  \partial_3 u_{B3} =0$
 and, consequently,
 $ \JUMP{\vo u}_3=0 $, as encompassed in the space $\Spu$ from \eqref{Spu1}. \EEE
Here,
 the adhering bodies are in bilateral contact along $S$ and   the tangential component of the stress vector applied along $S$ is given by
 \[
 \widehat{\gamma_S(\vo{\sigma}^{\pm}) e_3} = \pm \frac12 \int_{-1}^1 \left(\bar\rho (1{\pm}x_3) \partial_{tt}\vb{\widehat u} \pm (\bar\mu\partial_3 \vb{\widehat u}{+} \widehat{\xi e_3} )\right) \dd x_3 \qquad \text{on } S\times (0,T),
 \]
 with
 \[
 \left\{
 \begin{array}{lll}
 \displaystyle & \!  \! \! \! \! \! \! \! \xi \in \partial\overline{\calD}(\partial_3 \vb{v} {\symprod} e_3) & \qquad   \qquad  \text{in } B\times (0,T),
 \\
  \displaystyle
 &
 \!  \! \! \! \! \! \! \!  \bar\rho  \partial_{tt} \vb{\widehat u}  - 2\bar\mu \frac{\partial^2 \vb{\widehat u}}{\partial {x_3}^2} - \frac{\partial}{\partial {x_3}}  \widehat{\xi e_3} =0 &  \qquad   \qquad  \text{in } B\times (0,T),
 \\
  \displaystyle
 &
 \!  \! \! \! \! \! \! \!  \widehat{\gamma_{S^\pm}(\vb u)} = \widehat{\gamma_S (\vo{u}^\pm)} &  \qquad   \qquad \text{on } S\times (0,T),
 \end{array}
 \right.
 \]
  supplemented by suitable initial conditions. 
 Observe that only the tangential component of the traces on $S$ of the displacement in the adhering bodies is nonlocal-in-time. This is a kind of viscoelastic behavior with long memory when $\bar b$ is finite. When $\bar b =\infty$, since $\partial_3 \vb v=0$, the relative sliding along $S$ is frozen.
 \paragraph{\bf Case $2$:  $(\bar\lambda,\bar\mu)\in [0,\infty) \times \{\infty\}$.} We have
  \begin{subequations}
 \label{case2}
 \begin{align}
 &
 \label{Spu2}
 \Spu = H_{\Gdir}^1(\Omega;\R^3), 
 \\
 &
 \label{new-phi-2}
 \varphi(u,u') = \int_{\Omega{\setminus}S} a e (\vo u) \cdot e(\vo{u}') \dd x,
 \\
  &
  \label{Spv2}
  \overline{\Spu}  \EEE= \{ v \in \calP\, : \ \partial_3 \vb v =0, \ \JUMP{\vo v} =0\},
  \\
  &
  \label{tildev2}
  \tilde v = \vo{v}\,.
 \end{align}
 \end{subequations}
  In this case, the condition $\frac{\mu_n}{\eps_n} \to \infty$ implies that $e(\eps_n,\sop{\eps_n}[u_n]) \to 0$ in
 $L^2(B; \R_{\sym}^{3\times3})$, so that $  \partial_3 u_{B} =0$ and, consequently, $\JUMP{\vo u}=0$.  \EEE
 Hence $\vo u$ satisfies
 \[
 \int_\Omega \rho^* \partial_{tt} \vo{u} \cdot \psi \dd x + \int_S 2\bar\rho   \partial_{tt} \vo{u} \cdot \psi \dd \hat{x} +
 \int_\Omega a e(\vo u) \cdot e(\psi) \dd x = \int_\Omega f \cdot \psi \dd x + \int_{\Gneu} g \cdot \psi \dd \mathcal{H}^2(x)
 \]
 for all $\psi \in H_{\Gdir}^1 (\Omega;\R^3)$, in $(0,T)$.
 The adhering bodies are perfectly bonded and the obtained deformable body is submitted to  surface forces on $S$ corresponding to the limit
 of the vertical forces in the adhesive.
 \paragraph{\bf Case $3$: $(\bar\lambda,\bar\mu)\in (0,\infty)\times \{0\}$.}  Here we find
   \begin{subequations}
 \label{case3}
 \begin{align}
 &
 \label{Spu3}
 \Spu = \{ u = (\vo u, u_{B3}) \in H_{\Gdir}^1(\Omega{\setminus}S;\R^3) \times H_{\partial_3}(B)\, : \ (\gamma_{S^\pm}(\vb u))_3 = (\gamma_S (\vo{u}^\pm))_3 \},
 \\
 &
 \label{new-phi-3}
 \varphi(u,u') = \int_{\Omega{\setminus}S} a e (\vo u) \cdot e(\vo{u}') \dd x+\bar\lambda \int_B \partial_3 u_{B3} \cdot \partial_3 u_{B3}' \dd x,
 \\
  &
  \label{Spv3}
  \overline{\Spu} =\calP, \EEE
  \\
  &
  \label{tildev3}
  \tilde v = (\vo{v}, v_{B3})\,.
 \end{align}
 \end{subequations}
  In this case, from $\varphi_n(u_n,u_n) \leq C$ we may only infer the information that  $(\mathrm{tr}(e(\eps_n,\sop{\eps_n}[u_n])))_n$ is bounded
 in $L^2(B)$, and thus only $u_{B3} $ is defined and belongs to $H_{\partial_3}(B)$, with the condition $ (\gamma_{S^\pm}(\vb u))_3 = (\gamma_S (\vo{u}^\pm))_3$. \EEE
 Every adhering body is subjected to surface forces along $S$ given by
  \[
\gamma_S (\vo{\sigma}^{\pm}) e_3 = \pm \frac12 \int_{-1}^1 \left(\bar\rho (1{\pm}x_3) \partial_t \left(\vb{\widehat v},  \partial_t u_{B3}\right)  \pm (\bar\lambda\partial_3 u_{B3} e_3{+} \xi e_3 )\right) \dd x_3 \quad \text{on } S \times (0,T),
 \]
 with
 \[
 \left\{
 \begin{array}{lll}
 & \displaystyle  \!  \! \! \! \! \! \! \!
 \bar\rho     \partial_t(\vb{\widehat v},  \partial_t u_{B3}) -\frac{\partial}{\partial x_3} (\vb \sigma e_3) =0 & \qquad \qquad \text{in } B\times (0,T),
 \\
 & \displaystyle  \!  \! \! \! \! \! \! \!
\vb\sigma \in \bar\lambda \partial_3 u_{B3} e_3 \symprod e_3 +\partial\overline{\calD}(\partial_3 \vb v {\symprod} e_3) & \qquad \qquad \text{in } B\times (0,T),
 \\
 & \displaystyle \!  \! \! \! \! \! \! \!
(\gamma_{S^\pm}(\vb u))_3 = (\gamma_S(\vo{u}^\pm))_3, \quad \widehat{\gamma_{S^\pm}(\vb v)} = \widehat{\gamma_S(\vo{u}^\pm)} & \qquad \qquad \text{on } S\times (0,T),
 \end{array}
 \right.
 \]
 supplemented by suitable initial conditions. \EEE
 In this case, the forces are a  nonlocal-in-time function (of the traces on $S$ of the displacements of both adhering bodies)  of viscoelastic with long-memory type.
  \paragraph{\bf Case $4$: $\bar\lambda =\bar\mu=0$.}   Here
    \begin{subequations}
 \label{case4}
 \begin{align}
 &
 \label{Spu4}
 \Spu = \{ u = \vo u  \in H_{\Gdir}^1(\Omega{\setminus}S;\R^3)  \},
 \\
 &
 \label{new-phi-4}
 \varphi(u,u') = \int_{\Omega{\setminus}S} a e (\vo u) \cdot e(\vo{u}') \dd x,
 \\
  &
  \label{Spv4}
  \overline{\Spu} =\calP,
  \\
  &
  \label{tildev4}
  \tilde v = \vo{v}\,.
 \end{align}
 \end{subequations}
 	 Here, the estimate  $\varphi_n(u_n,u_n) \leq C$ yields no information. That is why, the only conditions are given on
 $\vo{v}$ and
  $\vb{v}$; they are involved in the domain of the operator $\abop$ (cf.\ \eqref{domain-oper-final}) and specified by the space
 $ \overline{\Spu}$ from  \eqref{Spv4}. \EEE
 Each adhering body is subject to surface forces along $S$ given by
   \[
 \gamma_S(\vo{\sigma}^\pm) e_3 = \pm \frac12 \int_{-1}^1 \left(\bar\rho (1{\pm}x_3) \partial_t \vb{v}  {\pm} \xi e_3 \right) \dd x_3 \qquad \text{on } S\times (0,T),
 \]
 with
 \[
 \left\{
 \begin{array}{lll}
 &
 \displaystyle  \!  \! \! \! \! \! \! \!
 \bar\rho    \partial_t \vb{v}  -\frac{\partial}{\partial x_3} (\xi e_3) =0 &  \qquad \qquad \text{in } B \times (0,T),
 \\
 &
  \displaystyle  \!  \! \! \! \! \! \! \!
\xi  \in \EEE \partial\overline{\calD}(\partial_3 \vb v {\symprod} e_3)  &  \qquad \qquad \text{in } B \times (0,T),
 \\
 &
  \displaystyle  \!  \! \! \! \! \! \! \!
 \gamma_{S^\pm}(\vb v) = \gamma_S(\vo{v}^\pm)  \EEE &  \qquad \qquad \text{on } S \times (0,T),
 \end{array}
 \right.
 \]
  supplemented by a  suitable initial condition. \EEE
 These forces are a nonlocal-in-time function (of the traces on $S$ of  the velocity in both adhering bodies) of viscous with long-memory type.


\begin{thebibliography}{99}



\bibitem{BBLR1}
Bonetti E., Bonfanti G., Lebon F., Rizzoni R.,
\emph{A model of imperfect interface with damage},
Meccanica, 52 (8), 1911--1922, (2017).

\bibitem{BBLR2}
Bonetti E., Bonfanti G., Lebon F.,
\emph {Derivation of imperfect interface models coupling damage and temperature}, Computers and Mathematics with Applications, 11, 2906--2916, (2019).


\bibitem{BBR1}
Bonetti E., Bonfanti G., Rossi R.,
\emph{Global existence
for a contact problem with adhesion},
 Math. Meth. Appl. Sci., 31, 1029--1064, (2008).

	
\bibitem{BBR3}	
Bonetti E., Bonfanti G., Rossi R.,
\emph{Thermal effects in adhesive contact: modelling and analysis},
Nonlinearity, 22, 2967--2731, (2009).
	




\bibitem{BBRen}
Bonetti E., Bonfanti G., Rossi R.,
\emph{Modeling via internal energy balance and analysis of adhesive contact with friction in thermoviscoelasticity},
Nonlin. Anal. RWA, 22, 473--507, (2015).

\bibitem{Brezis73} H.\ Brezis, Op\'erateurs Maximaux-Monotones et Semi-Groupes de Contraction dans les Espaces de Hilbert, North-Holland, 1972.


\bibitem{DLR14}
Dumont  S., Lebon, F., Rizzoni, R.,
\emph{An  asymptotic  approach  to  the  adhesion  of  thin  stiff  films},
Mech. Res. Com.,  58, 24--35, (2014).

\bibitem{FPRZ}
Freddi L.,  Paroni, R.,  Roub\`{\i}\v{c}ek T.,  Zanini C., \emph{Quasistatic delamination models for Kirchhoff-Love plates}, ZAMM Z. Angew. Math. Mech., 91, 845--865,  (2011).

\bibitem{FRZ}
Freddi L.,    Roub\`{\i}\v{c}ek T.,  Zanini C.,
\emph{Quasistatic delamination of sandwich-like Kirchhoff-Love plates}, J. Elast., 113, 219--250, (2013).

\bibitem{FRE87}
Fr\'emond M.,
\emph{Adh\'erence des solides}, J. Mec. Theor. Appl., 6, 383--407, (1987).
%
\bibitem{FRE01}
Fr\'emond M.,
\emph{Non-Smooth Thermo-mechanics},
Springer (2001).

\bibitem{LR11}
Lebon, F., Rizzoni, R.,
\emph{Asymptotic behavior of a hard thin linear elastic interphase: An energy approach},
Int. J. Sol. Struct., 48, 441--449, (2011).

\bibitem{Licht-Weller}
Licht C., Weller T.,
\emph{Approximation of semi-groups in the sense of Trotter and asymptotic mathematical modeling in physics of continuous media},
Discrete Contin. Dyn. Syst. Ser. S, 12, 1709--1741, (2019).



 \bibitem{KLA91}
 Klarbring A.,
\emph{Derivation of the adhesively bonded joints by the asymptotic
 expansion method},
Int. J. Eng. Sci., 29, 493--512, (1991).

\bibitem{KMR}
Ko\v{c}vara M., Mielke A.,  Roub\'{\i}\v{c}ek T.,
\emph{A rate-independent approach to the delamination problem},
Math. Mech. Solids, 11, 423--447, (2006).



\bibitem{L93}
Licht C.,
\emph{Comportement asymptotique d'une bande dissipative mince de faible rigidit\'e},
C. R. Acad. Sci. Paris S\'er. I Math., 317,  429--433, (1993).

\bibitem{LM97}
Licht C., Michaille G.,  \emph{A modelling of elastic bonded joints}, Adv.\ Math.\ Sci.\ Appl.,  7, 711--740, (1997).

\bibitem{LLOO}
C.\ Licht, A.\ L\'eger, S.\ Orankitjaroen, and A.\ Ould Khaoua:
Dynamics of elastic bodies connected by a thin soft viscoelastic
layer.  J. Math. Pures Appl., 99,  685--703, (2013).


\bibitem{mielke}
Mielke A., Roub\'{\i}\v{c}ek T., Thomas M.,
\emph{From damage to delamination in nonlinear elastic materials at small strains},
J. Elast., 109, 235--273, (2012).


\bibitem{ILM09}
O.\ Iosifescu, C.\ Licht, and G.\ Michaille, \emph{Nonlinear boundary conditions in Kirchhoff-Love plate theory}, J. Elast., 96, 57--79, (2009).


\bibitem{RSZ}
 Roub\'{\i}\v{c}ek T., Scardia L., Zanini C.,
\emph{Quasistatic delamination problem},
Contin. Mech. Thermodyn., 21,  223--235, (2009).


\bibitem{Trotter}
 Trotter, H. F. Approximation of semi-groups of operators. Pacific J. Math., 8,  887--919, (1958).


\end{thebibliography}
\end{document}